\newtheorem{theorem}{Theorem}[section]
\newtheorem{proposition}[theorem]{Proposition}
\newtheorem{lemma}[theorem]{Lemma}
\theoremstyle{remark}
\newtheorem{remark}[theorem]{Remark}
\newtheorem*{ackn}{Acknowledgments}
\DeclarePairedDelimiter{\norm}{\lVert}{\rVert}
\newcommand{\dual}[2]{\left\langle #1,#2 \right\rangle}
\newcommand{\eps}{\varepsilon}
\newcommand{\R}{\mathbb{R}}
\DeclareMathOperator{\sign}{sign}
\renewcommand{\leq}{\leqslant}
\renewcommand{\geq}{\geqslant}
\DeclareMathAlphabet{\mathpzc}{OT1}{pzc}{m}{it}
\begin{document}

\title[]{Computation of excited states\\ for the nonlinear Schrödinger equation: \\numerical and theoretical analysis}

\author[C.~Besse]{Christophe Besse}
\author[R.~Duboscq]{Romain Duboscq}
\author[S.~Le Coz]{Stefan Le Coz}

\thanks{This work was supported by the ANR LabEx CIMI (grant ANR-11-LABX-0040) within the French State Programme "Investissements d'Avenir" and the ANR project NQG ANR-23-CE40-0005.}
\address[Christophe Besse]{Institut de Math\'ematiques de Toulouse ; UMR5219,
  \newline\indent
  Universit\'e de Toulouse ; CNRS,
  \newline\indent
  UPS IMT, F-31062 Toulouse Cedex 9,
  \newline\indent
  France}
\email[Christophe Besse]{christophe.besse@math.univ-toulouse.fr}
\address[Romain Duboscq]{Institut de Math\'ematiques de Toulouse ; UMR5219,
  \newline\indent
  Universit\'e de Toulouse ; CNRS,
  \newline\indent
  UPS IMT, F-31062 Toulouse Cedex 9,
  \newline\indent
  France}
\email[Romain Duboscq]{romain.duboscq@math.univ-toulouse.fr}

\address[Stefan Le Coz]{Institut de Math\'ematiques de Toulouse ; UMR5219,
  \newline\indent
  Universit\'e de Toulouse ; CNRS,
  \newline\indent
  UPS IMT, F-31062 Toulouse Cedex 9,
  \newline\indent
  France}
\email[Stefan Le Coz]{slecoz@math.univ-toulouse.fr}

\subjclass[2010]{35Q55,35C08}
%35Q55(1991–now)NLS equations (nonlinear Schrödinger equations) For dynamical systems and ergodic theory, see 37K10
%35C08(2010–now)Soliton solutions [See also 37K40]

\date{\today}
\keywords{}

\begin{abstract}
Our goal is to compute excited states for the nonlinear Schr\"odinger equation in the radial setting. We introduce a new technique based on the Nehari manifold approach and give a comparison with the classical shooting method. We observe that the Nehari method allows to accurately compute excited states on large domains but is relatively slow compared to the shooting method. 
\end{abstract}

\maketitle

\setcounter{tocdepth}{1}
%\tableofcontents

\section{Introduction}

We consider the nonlinear Schr\"odinger equation
\begin{equation}\label{eq:nls}
 iu_t+\Delta u+f(u)=0
\end{equation}
where $u:\R\times\R^d\to\mathbb C$ and $f\in\mathcal{C}^1(\R,\R)$ is an odd function extended to $\mathbb C$ by setting $f(z)=f(|z|)z/|z|$ for all $z\in\mathbb C\setminus\{0\}$. Equation \eqref{eq:nls} arises in various physical contexts, for example in nonlinear optics or in the modelling of Bose-Einstein condensates. For physical applications as well as for its numerous interesting mathematical properties, \eqref{eq:nls} has been the subject of an intensive research over the past forty years. We refer for example to the books of Cazenave \cite{Ca03}, Fibich \cite{Fi15} and Sulem and Sulem \cite{SuSu99} for an overview of the known properties of \eqref{eq:nls} and references.

In this paper, we focus on special solutions of \eqref{eq:nls}, the so-called \emph{standing waves}. They are solutions of the form $e^{i\omega t}\phi(x)$ with $\omega>0$ and $\phi$ satisfying
\begin{equation}\label{eq:snls}
\left\{
\begin{array}{l}
-\Delta\phi+\omega\phi-f(\phi)=0,\\
\phi\in H^1(\mathbb R^d) \setminus\{0\}.
\end{array}
\right.
\end{equation}
Among solutions of \eqref{eq:snls}, it is common to distinguish between the \emph{ground states}, or \emph{least energy solutions}, and the other solutions, the \emph{excited states}. A \emph{ground state} is a solution of \eqref{eq:snls} minimizing among all solutions of \eqref{eq:snls} the functional $S$, often called  \emph{action}, defined for $v\in H^1(\mathbb R^d) $ by
\begin{equation}\label{eq:action}
S(v):=\frac{1}{2}\norm{\nabla v}_{L^2}^2+\frac{\omega}{2}\norm{v}_{L^2}^2-\int_{\mathbb R^d} F(v)dx,
\end{equation}
where $F(z):=\int_0^ {|z|}f(s)ds$ for all $z\in\mathbb C$. An \emph{excited state} is a solution of \eqref{eq:snls} which is \emph{not} a ground state. In general, we shall refer to any solution of \eqref{eq:snls} as \emph{bound state}.
% For short, we shall say \emph{ground state standing wave} for a standing wave $e^{i\omega t}\phi(x)$ with $\phi$ a ground state. Similarly, we shall also talk of \emph{excited state standing wave}.
Sufficient and almost necessary hypotheses on $f$ to ensure the existence of bound states are known since the fundamental works of Berestycki and Lions \cite{BeLi83-1,BeLi83-2} and Berestycki, Gallou\"et and Kavian \cite{BeGaKa83}. Under these hypotheses, it is proved in \cite{BeGaKa83,BeLi83-1,BeLi83-2} that, except in dimension $d=1$ where all bound states are ground states, there exist ground states and infinitely many excited states.

Note that the terminology \emph{ground state} may be understood in different ways depending on the context. Some authors may call ground state any minimizer of the energy functional under the mass constraint. For  power-type mass-subcritical nonlinear Schr\"odinger equations, this definition will coincide with ours. However, in other settings such as for the power-type mass-supercritical nonlinear Schr\"odinger equations, the two definitions do not coincide any more (there is no ground state in the later sense).

With our definition of ground states, it has long been established for power-type nonlinearities that ground states are positive, radial, and unique (see \cite{GiNiNi79,Kw89}). On the other hand, excited states will necessarily change sign, and may even be complex valued (see e.g. \cite{Li86}). They also need not to be radial, nor to even have any kind of symmetry group (as was shown recently in \cite{AoMuPaWe16}).

There are numerous works devoted to the numerical calculations of ground states. Among many others, we find the seminal work of Bao and Du \cite{BaDu04} devoted to the gradient flow with discrete normalisation which has been used in many settings (including by the authors of the present paper in the context of quantum graphs \cite{BeDuLe22A,BeDuLe22B}). We also mention the work of Choi and McKenna \cite{zbMATH00223162} devoted to the numerical implementation of the Mountain Pass approach, which has also been followed by numerous extensions and improvements (see e.g. \cite{MaRaRuSo21}). The Mountain Pass approach can be modified to compute nodal states, as was done by Costa, Ding and Neuberger \cite{CoDiNe11}, whose approach was later followed by Bonheure, Bouchez, Grumiau, and van Schaftingen \cite{BoBoGrva08}. Not many other works in the literature are devoted to the calculation of excited states, and to our knowledge this paper is the first one to present an approach based on  the Nehari manifold.

Our goal in this article is to develop numerical methods for the computation of excited states. We will also take this opportunity to study numerically some properties of the excited states and establish some conjectures that could be further investigated theoretically.

The two methods that we are considering are the \emph{shooting method} and the \emph{Nehari method}. The \emph{shooting method} is a classical method for the computation of solutions of boundary value problems. It consists in transforming the $d$-dimensional partial differential equation \eqref{eq:snls} into an ordinary differential equation by considering real-valued radial solutions. The boundary value problem for the ordinary differential equation is then converted into an initial value problem, which can be easily solved using a  standard scheme such as the Runge-Kutta 4th order method. In the present case, since we are working with an elliptic problem, we are led to consider initial conditions on the solution and its first derivative. The first derivative is necessarily set to $0$ since its originating from a smooth radial function. We are thus left with the initial value of the solution which will be used as a parameter to be chosen in order to recover the boundary condition at infinity. The method is described in Section \ref{sec:shooting}.

The idea of the \emph{Nehari method} originates from the variational characterization of bound states as minimizers of the action functional under constraints build upon the Nehari functional. In the case of the ground state, we simply minimize the action among functions for which the Nehari functional vanishes. To obtain excited states, more elaborate constructions are required. For example, one may minimize the action among real-valued functions having non-trivial positive and negative parts both satisfying the Nehari constraint.  We establish the existence of a solution (called \emph{least energy nodal minimizer}) to this variational problem in a radial setting in Theorem \ref{thm:least-energy-nodal}.
We also establish the existence of radial excited states vanishing on a given number of nodal regions in Theorem \ref{thm:least-energy-multinodal}. 
We refer to Section \ref{sec:some-theor-elem} for more details on the theoretical background. The numerical approach is based on  a projected gradient method which consists in one step of gradient flow for the action followed by a projection on the chosen space of constraints. The method is described in Section \ref{sec:nehari}.  While being a standard theoretical tool, the Nehari approach has been seldom used in numerical analysis. To our knowledge, this paper is the first to investigate the computation of excited states using a Nehari approach. 

While the shooting method is the go-to method for finding excited states, we identified some limitations of this approach. Indeed, it can only compute radial excited states, while the Nehari method could be extended to non-radial problems (see Remark \ref{rmk:compactness}). Moreover, even in the radial case, the length of the interval on which the solution is computed is limited for  the shooting method due to propagation of the error from the initial condition. This issue is not present for the Nehari approach. On the other hand, the convergence of the Nehari approach is much slower compared to the shooting method, which has a maximal number of iterations for a given precision.
The two methods are discussed in Section \ref{sec:comparison}.

We conclude this paper by some numerical experiments. We investigate numerically the relation between the initial values of the bound states and their total number of nodes. We also study the positions of the nodes and the extremal values between two consecutive nodes. For each case, we provide some guess on the underlying behavior. This material is presented in Section \ref{sec:exp}.

\begin{ackn}
We are grateful to the anonymous reviewers for their careful reading and relevant comments which helped to  improve the quality of the paper. 
\end{ackn}

\section{Theoretical approach for the minimization over the Nehari manifold}
\label{sec:some-theor-elem}

Our goal in this section is to present some theoretical elements around ground states and excited states and the Nehari minimization approach. 
We start by reviewing some well-known facts for the existence and properties of ground states and excited states. 
We also present the classical variational characterization of ground states as minimizers on the Nehari manifold (see Proposition \ref{prop:nehari}).
The rest of the section is then devoted to the statement and proof of our two main theoretical results. First, we establish a characterization of \emph{the first nodal radial excited state} as a minimizer on the Nehari nodal set (see Theorem \ref{thm:least-energy-nodal}). Second, we obtain a series of nodal radial excited states having a prescribed number of nodes $N$, which we call \emph{lower radial $N$-nodes excited state}, by minimization over multiple Nehari nodal sets (see Theorem \ref{thm:least-energy-multinodal}).
While the approach to obtain Theorems \ref{thm:least-energy-nodal} and \ref{thm:least-energy-multinodal} borrows elements from the existing literature (see e.g. \cite{SzWe10}), the results themselves seem to be new.

We consider \eqref{eq:snls} with solutions belonging to $H^1(\R^d,\R)$. Note that even if the problem originates from solutions of Schr\"odinger equation which are complex-valued, we restrict the study of stationary states to \emph{real valued} solutions. 
A typical example for $f$ is the power-type nonlinearity
$f(u)=|u|^{p-1}u$, $1<p<2^*-1$, where $2^*$ is the critical Sobolev
exponent, i.e. $2^*=\frac{2d}{d-2}$ if $d\geq 3$, $2^*=\infty$ if
$d=1,2$. More generally, we assume that $f:\R\to\R$ verifies the following
hypotheses (which are not optimal, but sufficient for our purpose). 
\begin{enumerate}[label=\textbf{(H\arabic*)}]

\item \label{item:h1} (regularity) The function $f$ is continuous and odd.
\item \label{item:h2} (subcriticality) There exists $1<p<2^*-1$ such that for large $s$,  $|f(s)|\lesssim
  |s|^{p}$.
\item \label{item:h3} (superlinearity) At $0$, $\lim_{s\to0}\frac{f(s)}{s}=0$.
\item \label{item:h4} (focusing) There exists $\xi_0>0$ such that $F(\xi_0)=\int_0^{\xi_0}f(s)ds>\frac{\xi_0^2}{2}$.
\end{enumerate}
Under \ref{item:h1}-\ref{item:h4}, it is well known (see \cite{BeGaKa83,BeLi83-1}) that
there exist \emph{ground state}
 solutions, i.e. solutions with minimal action
(see \eqref{eq:action} for the definition of the action)
among all possible solutions to \eqref{eq:snls}. Our definition of 
ground states as minimal action solutions is very common in the
analysis of nonlinear elliptic partial differential equations. However, as already said, the
terminology ground state has several other acceptations in
other contexts. E.g. when working with Schr\"odinger equations modelling
  Bose-Einstein condensation, many authors
  call ground state a minimizer of the energy on fixed mass
  constraint.

Uniqueness of the ground state
holds if $f$ satisfies in addition to \ref{item:h1}-\ref{item:h4} some
complementary requirements, e.g. if $f$ is of
power-type, see \cite{Kw89}. When $d\geq 2$, it was
proved in \cite{BeGaKa83,BeLi83-2} that there exists an infinite sequence of
\emph{excited states}, i.e. solutions to \eqref{eq:snls} whose action is not
minimal (actually, the corresponding sequence of actions tends to
infinity). 
%Moreover, under additional assumptions on the nonlinearity (e.g. if $d=2$ and $f$ is of power-type),
Uniqueness of radial excited states for a prescribed number of nodes has long been an open problem, even in the case of power non-linearities, with only partial results in this direction (see e.g. \cite{CoGaYa09,CoGaYa11}). The conjecture has very recently been established for power non-linearities in \cite{Ta24}.
%then there exists only
%one radial excited state with a given number of nodes, see
%\cite{CoGaYa09,CoGaYa11}. 

Recall that the action functional $S:H^1(\R^d)\to\R$ is defined in \eqref{eq:action}. It is a $\mathcal C^1$ functional (see
e.g. \cite{AmMa07}) and
$u$ is a solution of \eqref{eq:snls} if and only if $S'(u)=0$. We
define the \emph{Nehari functional} by
\[
I(u)=\dual{S'(u)}{u}=\norm{\nabla u}_{L^2}^2+\norm{u}_{L^2}^2-\int_{\R^d}f(u)udx.
\]
The \emph{Nehari manifold} is defined by
\[
\mathcal N=\{u\in H^1(\R^d)\setminus\{0\}:I(u)=0\}.
\]
Define the \emph{Nehari level} by
\[
m_{\mathcal N}=\inf\{S(v):v\in\mathcal N\}.
\]
In addition to \ref{item:h1}-\ref{item:h4}, we assume the following.
\begin{enumerate}[label=\textbf{(H\arabic*)}]
  \setcounter{enumi}{4}
\item \label{item:h5} The function $s\to\frac{f(s)}{s}$ is increasing for $s>0$. 
\item \label{item:h6} (Ambrosetti-Rabinowitz superquadraticity
condition) There exists $\theta>2$ such that $\theta F(s)<sf(s)$ for all $s>0$. 
\end{enumerate}
Then under \ref{item:h1}-\ref{item:h6}, the following holds (see e.g. \cite{SzWe10} and
the references cited therein).
\begin{proposition}
\label{prop:nehari}
 For every sequence $(u_n)\in \mathcal N$ such that 
\[
\lim_{n\to\infty}S(u_n)=m_{\mathcal N},
\]
there exist $u_\infty\in\mathcal N$ and $(y_n)\subset \R^d$ such that, up to a subsequence,
\[
\lim_{n\to\infty}\norm{u_n(\cdot-y_n)-u_\infty}_{H^1}=0.
\] 
Moreover, $u_\infty$ is a ground state solution of \eqref{eq:snls}.
\end{proposition}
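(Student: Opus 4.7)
The plan is to follow the standard Nehari manifold machinery, combined with Lions' concentration-compactness lemma to handle the loss of compactness due to translations.

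First I would establish two preliminary facts. (i) $m_{\mathcal N}>0$ and minimizing sequences are bounded in $H^1$: from $I(u_n)=0$ and the Ambrosetti--Rabinowitz condition \ref{item:h6}, one gets
\[
S(u_n) = S(u_n) - \tfrac{1}{\theta}I(u_n) \geq \Bigl(\tfrac{1}{2}-\tfrac{1}{\theta}\Bigr)\norm{u_n}_{H^1}^2,
\]
so $\norm{u_n}_{H^1}$ is bounded; conversely, the subcriticality \ref{item:h2} and superlinearity \ref{item:h3} bounds $|f(s)s|\leq \eps s^2 + C_\eps|s|^{p+1}$ combined with Sobolev embeddings applied to $I(u_n)=0$ yield a uniform lower bound $\norm{u_n}_{H^1}\geq c>0$, hence $m_{\mathcal N}>0$. (ii) For each $u\in H^1\setminus\{0\}$, the fibering map $t\mapsto S(tu)$ has, by \ref{item:h5}--\ref{item:h6}, a unique maximizer $t_u>0$ and $t_u u$ is the unique intersection of the ray with $\mathcal N$; moreover $t\mapsto I(tu)$ is positive on $(0,t_u)$ and negative on $(t_u,\infty)$. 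This monotonicity property will be the main tool to rule out strict inequality in the Nehari constraint at the limit.

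Next I would apply Lions' vanishing lemma. If $\sup_{y\in\R^d}\int_{B(y,1)}|u_n|^2\,dx\to 0$, then $u_n\to 0$ in $L^q$ for every $2<q<2^*$; by the growth bound on $f$ this forces $\int f(u_n)u_n\,dx\to 0$, and then $I(u_n)=0$ forces $\norm{u_n}_{H^1}\to 0$, contradicting $m_{\mathcal N}>0$. Hence there exist $\delta>0$ and $y_n\in\R^d$ with $\int_{B(y_n,1)}|u_n|^2\,dx\geq\delta$. Setting $\tilde u_n=u_n(\cdot-y_n)$, which still satisfies $S(\tilde u_n)\to m_{\mathcal N}$ and $I(\tilde u_n)=0$ by translation invariance, I extract a subsequence with $\tilde u_n\rightharpoonup u_\infty$ weakly in $H^1$ and strongly in $L^q_{\mathrm{loc}}$; the non-vanishing bound ensures $u_\infty\not\equiv 0$.

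Then I would show $u_\infty\in\mathcal N$ with $S(u_\infty)=m_{\mathcal N}$. Passing to a subsequence with a.e.\ convergence and using the growth hypothesis together with a Brezis--Lieb or Vitali-type argument, one gets $\int f(\tilde u_n)\tilde u_n\,dx \to \int f(u_\infty)u_\infty\,dx$ (or at least that the limit equals $\int f(u_\infty)u_\infty\,dx$ plus a nonnegative loss), while weak lower semicontinuity of $\norm{\cdot}_{H^1}^2$ gives $I(u_\infty)\leq 0$. If $I(u_\infty)<0$, the fibering analysis from step (ii) produces $t_\infty\in(0,1)$ with $t_\infty u_\infty\in\mathcal N$; but then, using \ref{item:h6} to check that $h(t):=S(tu_\infty)-\tfrac{1}{\theta}I(tu_\infty)$ is non-decreasing on $(0,t_\infty]$, one obtains
\[
m_{\mathcal N}\leq S(t_\infty u_\infty)\leq S(u_\infty)-\tfrac{1}{\theta}I(u_\infty)\leq \liminf_{n}\Bigl(S(\tilde u_n)-\tfrac{1}{\theta}I(\tilde u_n)\Bigr)=m_{\mathcal N},
\]
which forces $I(u_\infty)=0$ after all. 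Thus $u_\infty\in\mathcal N$ and $S(u_\infty)=m_{\mathcal N}$.

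Finally, strong convergence follows: since $S(\tilde u_n)\to S(u_\infty)$, $I(\tilde u_n)=I(u_\infty)=0$, and $\int F(\tilde u_n)\to\int F(u_\infty)$ by the same compactness argument, one deduces $\norm{\tilde u_n}_{H^1}\to\norm{u_\infty}_{H^1}$, which together with weak convergence in the Hilbert space $H^1$ gives $\tilde u_n\to u_\infty$ strongly. That $u_\infty$ is actually a critical point of $S$, and hence a ground state solution of \eqref{eq:snls}, comes from the standard Lagrange multiplier argument on $\mathcal N$: any multiplier $\lambda$ in $S'(u_\infty)=\lambda I'(u_\infty)$ must vanish because testing against $u_\infty$ gives $\lambda\dual{I'(u_\infty)}{u_\infty}=0$ and the fibering analysis shows $\dual{I'(u_\infty)}{u_\infty}<0$ under \ref{item:h5}. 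I expect the delicate step to be the passage to the limit in $\int f(\tilde u_n)\tilde u_n\,dx$ and the strict-inequality argument $I(u_\infty)<0\Rightarrow$ contradiction; all other steps are essentially bookkeeping.
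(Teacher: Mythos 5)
The paper itself does not prove Proposition \ref{prop:nehari} (it is quoted from \cite{SzWe10} and the references therein), so your proposal must stand on its own, and its skeleton (boundedness and $m_{\mathcal N}>0$ via $S-\tfrac1\theta I$, Lions' vanishing lemma, translation, fibering map) is indeed the standard one. There is however a genuine gap at the decisive compactness step, namely the claim that weak lower semicontinuity yields $I(u_\infty)\le 0$. On $H^1(\R^d)$ without radial symmetry there is no Strauss compactness, so $u\mapsto\int_{\R^d} f(u)u\,dx$ is \emph{not} weakly continuous; the Brezis--Lieb splitting with $v_n=\tilde u_n-u_\infty\rightharpoonup 0$ gives $\int f(\tilde u_n)\tilde u_n=\int f(u_\infty)u_\infty+\int f(v_n)v_n+o(1)$ with $\int f(v_n)v_n\ge 0$, i.e. your ``nonnegative loss'' $\mu$ enters with the \emph{wrong} sign: combined with $\norm{u_\infty}_{H^1}^2\le\liminf\norm{\tilde u_n}_{H^1}^2$ you only obtain $I(u_\infty)\le\mu$, not $I(u_\infty)\le 0$. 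A dichotomizing minimizing sequence whose escaping bump carries $I(v_n)<0$ would leave a weak limit with $I(u_\infty)>0$, and nothing in your argument excludes this; your fibering/contradiction chain only treats $I(u_\infty)<0$. The same unproved compactness is invoked again when you assert $\int F(\tilde u_n)\to\int F(u_\infty)$ to get strong convergence, so the conclusion does not close as written.

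Two further points. First, the monotonicity you use for $h(t)=S(tu_\infty)-\tfrac1\theta I(tu_\infty)$ on $(0,t_\infty]$ does not follow from \ref{item:h1}--\ref{item:h6} (it would need an extra condition of the type $f'(s)s\ge(\theta-1)f(s)$); the functional that works is $S-\tfrac12 I$, whose integrand $\tfrac12 f(s)s-F(s)$ is nonnegative and nondecreasing in $|s|$ by \ref{item:h5}, and weakly lower semicontinuous by Fatou. Second, here are the two standard ways to close the gap: (a) show, via Ekeland's principle and the natural-constraint property of $\mathcal N$, that the minimizing sequence may be taken to be a Palais--Smale sequence; then non-vanishing plus local compactness make the translated weak limit a nontrivial critical point, hence $u_\infty\in\mathcal N$ automatically, and $S(u_\infty)=(S-\tfrac12 I)(u_\infty)\le\liminf\,(S-\tfrac12 I)(\tilde u_n)=m_{\mathcal N}$; or (b) perform the dichotomy analysis explicitly: Brezis--Lieb for both $\int F(u)$ and $\int f(u)u$, then if $\liminf I(v_n)<0$ project $v_n$ onto $\mathcal N$ with a factor $t_n\le 1$ and use $m_{\mathcal N}\le S(t_nv_n)\le (S-\tfrac12 I)(v_n)$ together with $(S-\tfrac12 I)(\tilde u_n)=(S-\tfrac12 I)(u_\infty)+(S-\tfrac12 I)(v_n)+o(1)$ and $(S-\tfrac12 I)(u_\infty)>0$ to reach the contradiction $m_{\mathcal N}>m_{\mathcal N}$; the remaining case $\liminf I(v_n)\ge 0$ gives $I(u_\infty)\le 0$, after which your projection argument, run with $S-\tfrac12 I$ instead of $S-\tfrac1\theta I$, yields $I(u_\infty)=0$, $S(u_\infty)=m_{\mathcal N}$, and strong convergence. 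Finally, the Lagrange-multiplier step at the end needs $f\in\mathcal C^1$ (acceptable here since the introduction assumes it, though \ref{item:h1} only requires continuity, which is why \cite{SzWe10} argues differently).
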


We now want to construct variational characterizations of excited
states which can be used in numerical approaches. 
Based on Proposition \ref{prop:nehari}, it is natural to try to
generalize the Nehari manifold approach. Several directions of
investigations are possible. The most natural one is probably to
define the \emph{Nehari nodal set} as
\[
\mathcal N_{\mathrm{nod}}=\{u\in H^1(\R^d):I(u^+)=0,I(u^-)=0,u^\pm\neq0\}.
\]
where $u^+=\max(u,0)$ and $u^-=\max(-u,0)$. Define the \emph{Nehari nodal
level} by
\[
m_{\mathcal N_{\mathrm{nod}}}=\inf\{S(v):v\in\mathcal N_{\mathrm{nod}}\}.
\]
\begin{remark}
  An approach based on minimization of the energy on mass
  constraints for the positive and negative parts of the function
  cannot work, as the minimizer that we might obtain would be
  (formally) a
  solution of an equation of the form
\[
E'(u)+\lambda_+M'(u_+)+\lambda_-M'(u_-) = 0,
\] 
with potentially different Lagrange multipliers $\lambda_\pm$. This
issue is avoided with the Nehari approach.  
\end{remark}

\begin{lemma}
\label{lem:not-achieved}
Assume \ref{item:h1}-\ref{item:h6}. The nodal Nehari infimum  level $m_{\mathcal N_{\mathrm{nod}}}$ satisfies
\begin{equation}\label{eq:nehari-levels}
m_{\mathcal N_{\mathrm{nod}}}=2m_{\mathcal N}.
\end{equation}
Moreover, there is no function achieving $m_{\mathcal N_{\mathrm{nod}}}$. 
\end{lemma}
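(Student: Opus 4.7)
My plan is to establish the identity through matching inequalities and then rule out attainment via the strong maximum principle.

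For the lower bound $m_{\mathcal{N}_{\mathrm{nod}}} \geq 2 m_{\mathcal{N}}$, I would exploit the disjoint supports of $u^+$ and $u^-$ together with the oddness of $f$, which make both $S$ and $I$ additive: $S(u)=S(u^+)+S(u^-)$ and $I(u)=I(u^+)+I(u^-)$. Any $u \in \mathcal{N}_{\mathrm{nod}}$ then decomposes into two elements of $\mathcal{N}$, so $S(u) \geq 2 m_{\mathcal{N}}$.

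For the reverse inequality, I would construct an explicit minimizing sequence by gluing two widely separated copies of a positive radial ground state $\phi$, whose existence is provided by Proposition \ref{prop:nehari} (together with the standard Schwarz rearrangement argument) and whose exponential decay at infinity is a classical consequence of the subcriticality and $\omega>0$. Fix a smooth cutoff $\eta$ equal to $1$ on $B(0,1/4)$ and supported in $B(0,1/2)$, let $e_1$ be a unit vector, and set
\[
\phi_n^\pm(x) := \eta\bigl((x \mp n e_1)/n\bigr)\,\phi(x \mp n e_1),
\]
so that $\phi_n^+$ and $\phi_n^-$ have disjoint supports and each converges (up to translation) to $\phi$ in $H^1$. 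Hypotheses \ref{item:h5}-\ref{item:h6} yield the standard Nehari fibering: for every $v \in H^1(\R^d)\setminus\{0\}$ the map $\alpha \mapsto S(\alpha v)$ on $(0,\infty)$ admits a unique maximizer $\alpha(v)>0$, depending continuously on $v$, with $\alpha(v)v \in \mathcal{N}$. Applied to $\phi_n^\pm$, this produces scalars $\alpha_n^\pm \to 1$ such that $u_n := \alpha_n^+ \phi_n^+ - \alpha_n^- \phi_n^- \in \mathcal{N}_{\mathrm{nod}}$, and continuity of $S$ yields $S(u_n)\to 2S(\phi) = 2m_{\mathcal{N}}$.

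For non-attainment, assume that some $u \in \mathcal{N}_{\mathrm{nod}}$ realises $m_{\mathcal{N}_{\mathrm{nod}}}$. The additive decomposition together with $u^\pm \in \mathcal{N}$ forces $S(u^+)=S(u^-)=m_{\mathcal{N}}$, so $u^+$ minimizes $S$ on $\mathcal{N}$. Since \ref{item:h5} makes $\mathcal{N}$ a natural constraint, $u^+$ is a nontrivial non-negative weak solution of \eqref{eq:snls}, hence classical by elliptic bootstrap. Because \ref{item:h5} also implies $f \geq 0$ on $[0,\infty)$, the strong maximum principle applied to $-\Delta u^+ + \omega u^+ = f(u^+) \geq 0$ yields $u^+ > 0$ everywhere on $\R^d$. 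This contradicts $u^+ \equiv 0$ on the positive-measure set $\{u^- > 0\}$.

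The main technical obstacle I anticipate is the upper bound construction: one must check that the Nehari projections $\alpha_n^\pm$ are well-defined, bounded away from $0$ and $\infty$, and converge to $1$. This rests on the standard fibering analysis under \ref{item:h5}-\ref{item:h6} combined with the $H^1$ convergence $\phi_n^\pm \to \phi$, itself a consequence of exponential decay. The remaining two steps are essentially bookkeeping and a textbook invocation of the maximum principle.
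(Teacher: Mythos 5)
Your proof is correct and follows essentially the same route as the paper: additivity of $S$ over the disjoint supports of $u^{\pm}$ for the lower bound, two widely separated copies of the positive radial ground state rescaled back onto the Nehari constraints for the upper bound, and a regularity-plus-maximum-principle contradiction for non-attainment. The only (harmless) difference is that you truncate the translated ground states to obtain genuinely disjoint supports and project each piece separately, whereas the paper works with the antisymmetric difference $u_\infty(\cdot+y_n)-u_\infty(\cdot-y_n)$ and a single rescaling $t_n\to 1$.
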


%We have
%\begin{equation}\label{eq:nehari-levels}
%m_{\mathcal N_{\mathrm{nod}}}=2m_{\mathcal N}.
%\end{equation}
\begin{proof}
 Let $u\in \mathcal
N_{\mathrm{nod}}$. Since $u^+$ and $u^-$ are both in $\mathcal N$ and have disjoint support, we have
\[
S(u)=S(u^+)+S(u^-)\geq 2m_{\mathcal N},
\]
and therefore $m_{\mathcal N_{\mathrm{nod}}}\geq 2m_{\mathcal N}$.
Let $u_\infty$ be a minimizer for $m_{\mathcal N}$. 
We can assume that $u_\infty$ is positive, radial and exponentially decreasing (see e.g. \cite{Ca03,GiNiNi79}). 

Let 
$(y_n)\subset\R^d$ and define
\begin{equation}\label{eq:example}
u_n=u_\infty(\cdot+y_n)-u_\infty(\cdot-y_n). 
\end{equation}
When $|y_n|\to \infty$,
we have
\[
S(u_n)\to 2m_{\mathcal N}.
\]

We assume that for each $n$, the translation parameter $y_n$ is of the form $(y_n^1,0,\dots,0)$. Then $u_n$ is odd with respect to the first component. Moreover, $(u_n)_+$ is supported on $\{x^1\geq0\}$ and $(u_n)_-$ is supported on $\{x^1\leq0\}$. Since $u_n$ and $\nabla u_n$ are exponentially decaying, we have in $H^1(\R^d)$ the strong convergence 
\[
(u_n)_+(\cdot-y_n)\to u_\infty,\quad (u_n)_-(\cdot+y_n)\to u_\infty.
\]
In particular, we have 
\[
\lim_{n\to \infty}I((u_n)_+)=
\lim_{n\to \infty}I((u_n)_-)=I(u_\infty)=0.
\]
Hence there exists $t_n>0$, $t_n\to 1$ such that, for all $n\in\mathbb N$, we have
\[
I(t_n(u_n)_+)=I(t_n(u_n)_-)=0.
\]
Writing $\tilde u_n=t_n u_n$, we have $\tilde u_n\in \mathcal N_{\mathrm{nod}}$ and 
\[
S(\tilde u_n)\to 2m_{\mathcal N}.
\]
This implies that $m_{\mathcal N_{\mathrm{nod}}}\leq 2m_{\mathcal N}$. As we already established the reverse inequality, this proves \eqref{eq:nehari-levels}. 

Unfortunately,
$m_{\mathcal N_{\mathrm{nod}}}$ is not achieved. Indeed, suppose on the contrary
that $u_{\mathrm{nod}}$ realizes the minimum for $m_{\mathcal N_{\mathrm{nod}}}$. Since
$u_{\mathrm{nod}}^\pm\in\mathcal N$ and $m_{\mathcal N_{\mathrm{nod}}}=2m_{\mathcal N}$,
 both $u_{\mathrm{nod}}^+$ and $u_{\mathrm{nod}}^-$ realize the minimum for $\mathcal
 N$ and are ground states of \eqref{eq:snls}. In particular, they are
 both regular, and by the maximum principle, both have to be positive or negative on the whole
 $\R^d$, which is a contradiction. Therefore $m_{\mathcal N_{\mathrm{nod}}}$ is
 not achieved - from \eqref{eq:example}, we can easily guess that this
 is due to a loss of compactness in the minimizing sequences.   
  \end{proof}

\begin{remark}
On the other hand, if the power nonlinearity
  $|u|^{p-1}u$ is replaced by a Choquart/Hartree term
  (e.g. $(|x|^{-1}*|u|^2)u$ in $\R^3$), then it is possible to obtain
  nodal critical points by minimizing $S$ on
  $\mathcal N_+\cap\mathcal N_-$, see \cite{GhVa16}.
\end{remark}

To
 overcome the issue raised by Lemma \ref{lem:not-achieved}, we decide to work in a radial setting. Recall that the restriction of $H^1(\R^d)$ to radial functions is the space denoted $H^1_{\mathrm{rad}}(\R^d)$ defined by
 \[
 H^1_{\mathrm{rad}}(\R^d)=\{u\in H^1(\R^d):\exists v:[0,\infty)\to \R,\forall x\in\R^d, \,u(x)=v(|x|) \}.
 \]
For $u\in  H^1_{\mathrm{rad}}(\R^d)$ and the corresponding $v$, we have
\[
\int_{\R^d}\left(|\nabla u(x)|^2+|u(x)|^2\right)dx=
2\pi\int_0^\infty (|v'(s)|^2+|v(s)|^2)s^{d-1}ds.
\]
In particular, $v\in H^1(\R^+,s^{d-1}ds)$. As a consequence, the function $v$ has a representative which is continuous on $(0,\infty)$ (but may be singular at $0$). In the sequel, we will use the same notation for $u$ and its radial representative, i.e. for $u$ radial we write $u(x)=u(|x|)$.
Recall from Strauss' Lemma \cite{St77} that the injection $H^1_{\mathrm{rad}}(\R^d)\hookrightarrow L^q(\R^d)$, $2<q<2^*$ is compact whenever $d\geq2$. 
 
 \begin{remark}
 \label{rmk:compactness}
 We have made the choice to work in a radial setting as it allows comparison with the results given by the shooting method. The key point to recover nodal minimizers is a compactness property. %One could alternatively consider b
Hence, the results of the present article around the Nehari method for two nodal components could be extended to bounded domains, confining potentials, etc. It is however unclear  how to generalize this approach to recover solutions with more than two nodal domains.
 \end{remark}
 
 Define 
\[
\mathcal N_{\mathrm{nod,rad}}=\{u\in H^1_{\mathrm{rad}}(\R^d):I(u^+)=0,I(u^-)=0,u^\pm\neq0\},
\]
and 
\[
m_{\mathcal N_{\mathrm{nod,rad}}}=\inf\{S(v):v\in\mathcal N_{\mathrm{nod,rad}}\}.
\]
Then the following result gives the existence of a minimizer for
$m_{\mathcal N_{\mathrm{nod,rad}}}$. 

\begin{theorem}
\label{thm:least-energy-nodal}
For every sequence $(u_n)\in \mathcal N_{\mathrm{nod,rad}}$ such that 
\[
\lim_{n\to\infty}S(u_n)=m_{\mathcal N_{\mathrm{nod,rad}}}
\]
there exists $u_\infty\in\mathcal N_{\mathrm{nod,rad}}$ such that, up to a subsequence,
\[
\lim_{n\to\infty}\norm{u_n-u_\infty}_{H^1}=0.
\] 
Moreover, $u_\infty$ is a nodal solution of \eqref{eq:snls} with
exactly two nodal domains. We say
that $u_\infty$ is a \emph{least nodal excited state}.
\end{theorem}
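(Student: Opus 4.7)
My plan is to combine the radial compact embedding with an auxiliary monotonicity argument in the spirit of \cite{SzWe10}. Let $(u_n)\subset\mathcal N_{\mathrm{nod,rad}}$ be a minimizing sequence. The first step is to produce an $H^1$-bound. I introduce the auxiliary functional
\[
J(u):=S(u)-\tfrac{1}{2}I(u)=\int_{\R^d}\left(\tfrac{1}{2}f(u)u-F(u)\right)dx,
\]
which coincides with $S$ on $\mathcal N$. The Ambrosetti-Rabinowitz condition \ref{item:h6} yields the pointwise estimate $\tfrac{1}{2}f(s)s-F(s)\geq(\tfrac{1}{2}-\tfrac{1}{\theta})f(s)s$, and since $I(u_n^\pm)=0$,
\[
S(u_n^\pm)=J(u_n^\pm)\geq\bigl(\tfrac{1}{2}-\tfrac{1}{\theta}\bigr)\bigl(\norm{\nabla u_n^\pm}_{L^2}^2+\omega\norm{u_n^\pm}_{L^2}^2\bigr).
\]
As $S(u_n^+)+S(u_n^-)=S(u_n)\to m_{\mathcal N_{\mathrm{nod,rad}}}$ and each $S(u_n^\pm)\geq m_{\mathcal N}$, both $u_n^\pm$ are bounded in $H^1_{\mathrm{rad}}(\R^d)$. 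Invoking Strauss's compact embedding \cite{St77}, I extract a subsequence with $u_n\rightharpoonup u_\infty$ weakly in $H^1$, $u_n\to u_\infty$ strongly in $L^q$ for $2<q<2^*$, and almost everywhere; continuity of $v\mapsto v^\pm$ then upgrades this to $u_n^\pm\rightharpoonup u_\infty^\pm$ weakly in $H^1$ and $u_n^\pm\to u_\infty^\pm$ strongly in $L^{p+1}$.

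The crux of the proof is to identify $u_\infty$ as an element of $\mathcal N_{\mathrm{nod,rad}}$. Using \ref{item:h2}-\ref{item:h3}, the Nehari identity $I(u_n^\pm)=0$ together with the Sobolev embedding yields a uniform lower bound $\norm{u_n^\pm}_{L^{p+1}}\geq\beta>0$, which passes through the strong $L^{p+1}$-convergence to give $u_\infty^\pm\neq 0$. By weak $H^1$-lower semicontinuity of the quadratic part and continuity of $u\mapsto\int f(u)u\,dx$ on bounded subsets of $H^1$ (from the growth bounds combined with the strong $L^{p+1}$ convergence), I obtain $I(u_\infty^\pm)\leq\liminf_n I(u_n^\pm)=0$. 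The key auxiliary fact, which follows from \ref{item:h5}, is that $g(s):=\tfrac{1}{2}f(s)s-F(s)$ is strictly increasing in $|s|$. Combined with the standard Nehari fibration, this implies that for any nonzero $u\in H^1$ with $I(u)\leq 0$, there is a unique $t_u\in(0,1]$ with $t_u u\in\mathcal N$, and the pointwise bound $g(u(x))\geq g(t_u u(x))$ gives $J(u)\geq J(t_u u)=S(t_u u)\geq m_{\mathcal N}$, with strict inequality when $I(u)<0$. Since $J$ is continuous under strong $L^{p+1}$ convergence (with uniform $L^2$ bound), $J(u_n^\pm)\to J(u_\infty^\pm)$, and Lemma~\ref{lem:not-achieved} gives
\[
J(u_\infty^+)+J(u_\infty^-)=\lim_nS(u_n)=2m_{\mathcal N}.
\]
This forces $J(u_\infty^\pm)=m_{\mathcal N}$ and rules out $I(u_\infty^\pm)<0$; hence $I(u_\infty^\pm)=0$, so $u_\infty\in\mathcal N_{\mathrm{nod,rad}}$ is a minimizer.

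Strong $H^1$-convergence then follows from the Radon-Riesz property: since $I(u_n^\pm)=I(u_\infty^\pm)=0$ and $\int f(u_n^\pm)u_n^\pm\,dx\to\int f(u_\infty^\pm)u_\infty^\pm\,dx$, the equivalent norm $\norm{\nabla u_n^\pm}_{L^2}^2+\omega\norm{u_n^\pm}_{L^2}^2$ converges to the corresponding quantity for $u_\infty^\pm$, which together with weak convergence gives $u_n\to u_\infty$ in $H^1$. To show that $u_\infty$ solves \eqref{eq:snls}, I would carry out a standard constrained critical point argument on the two-parameter fibration $(t,s)\mapsto tu_\infty^+-su_\infty^-$, whose unique maximum of $S$ at $(t,s)=(1,1)$ forces the Lagrange multipliers to vanish, via a deformation-type argument à la Szulkin-Weth. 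Finally, if $u_\infty$ had $k\geq 3$ radial nodal domains, one of $u_\infty^\pm$ would split into $\ell\geq 2$ disjoint radial nodal pieces $v_1,\ldots,v_\ell$; testing the equation against each $v_i$ gives $I(v_i)=0$, so $S(u_\infty)=\sum_{i=1}^\ell S(v_i)+S(u_\infty^\mp)\geq(\ell+1)m_{\mathcal N}\geq 3m_{\mathcal N}$, contradicting $S(u_\infty)=2m_{\mathcal N}$. The main obstacle lies in the middle paragraph, where the simultaneous use of \ref{item:h5} and \ref{item:h6} via the auxiliary functional $J$ is essential to rule out the strict inequality $I(u_\infty^\pm)<0$ that would otherwise prevent $u_\infty$ from lying in $\mathcal N_{\mathrm{nod,rad}}$.
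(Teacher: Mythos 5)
There is a genuine gap: your argument hinges on the identity $\lim_n S(u_n)=2m_{\mathcal N}$, i.e.\ on $m_{\mathcal N_{\mathrm{nod,rad}}}=2m_{\mathcal N}$, which you justify by Lemma \ref{lem:not-achieved}. That lemma concerns the \emph{non-radial} nodal set $\mathcal N_{\mathrm{nod}}$: the sequence realizing the level $2m_{\mathcal N}$ is made of two ground-state bumps translated to $\pm y_n$, which is not radial, so it gives no upper bound for the radial level. In fact $m_{\mathcal N_{\mathrm{nod,rad}}}>2m_{\mathcal N}$ must hold: if the two levels coincided, the radial minimizer whose existence the theorem asserts would also achieve $m_{\mathcal N_{\mathrm{nod}}}$, contradicting the non-attainment statement of Lemma \ref{lem:not-achieved}. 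Consequently both places where you invoke this identity collapse: (i) the balance $J(u_\infty^+)+J(u_\infty^-)=2m_{\mathcal N}$ that was supposed to force $J(u_\infty^\pm)=m_{\mathcal N}$ and rule out $I(u_\infty^\pm)<0$ gives no information, and (ii) the final nodal-domain count, where you contrast $S(u_\infty)=2m_{\mathcal N}$ with $(\ell+1)m_{\mathcal N}\geq 3m_{\mathcal N}$, is vacuous because all that is known is $S(u_\infty)=m_{\mathcal N_{\mathrm{nod,rad}}}$, which need not lie below $3m_{\mathcal N}$.

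Both steps can be repaired by comparing with $m_{\mathcal N_{\mathrm{nod,rad}}}$ itself rather than with $2m_{\mathcal N}$, which is essentially the paper's route. From $I(u_\infty^\pm)\leq 0$ and Lemma \ref{lem:scaling} choose $t^\pm\in(0,1]$ with $I(t^\pm u_\infty^\pm)=0$; then $t^+u_\infty^+-t^-u_\infty^-\in\mathcal N_{\mathrm{nod,rad}}$ and the monotonicity of $J$ along rays (your $g$ increasing, from \ref{item:h5}) yields $S(t^+u_\infty^+-t^-u_\infty^-)\leq\liminf_n S(u_n)=m_{\mathcal N_{\mathrm{nod,rad}}}$, with strict inequality unless $t^\pm=1$; minimality then forces $t^\pm=1$, hence $I(u_\infty^\pm)=0$, membership in $\mathcal N_{\mathrm{nod,rad}}$, and strong convergence (the paper runs the analogous scaling/norm-comparison argument in the proof of Theorem \ref{thm:least-energy-multinodal}). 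For the count of nodal domains, the paper passes to the relaxed problem $m_{alt}$ with functional $J$ on $\{I(u^+)\leq0,\ I(u^-)\leq0,\ u^\pm\neq0\}$: if $u_\infty$ had a third nodal region, deleting a component with nonpositive $I$ produces an admissible competitor with strictly smaller $J$ (since $J>0$ on nonzero pieces by \ref{item:h6}), contradicting minimality. Your remaining ingredients (boundedness via the Ambrosetti--Rabinowitz estimate, Strauss compactness, nontriviality of $u_\infty^\pm$, and a deformation/degree argument for criticality, which is indeed required because $\mathcal N_{\mathrm{nod,rad}}$ is not a manifold) are consistent with the paper.
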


\begin{remark}
  Minimizing on $\mathcal N_{\mathrm{nod,rad}}$ is
  intrinsically more difficult than minimizing on $\mathcal
  N$.
  Indeed, $\mathcal N_{\mathrm{nod,rad}}$ is \emph{not} a manifold, as the
  functionals
  \[
  u\in H^1(\R^d)\to \norm{\nabla u^\pm}_{L^2}^2
  \]
  are \emph{not} $\mathcal C^1$ (see the discussion after Theorem 18
  in \cite{SzWe10}).
\end{remark}

To compute numerically nodal solutions, an approach based on Theorem \ref{thm:least-energy-nodal} would provide only the least radial nodal excited state. As we would also like to compute higher order excited states, we will adopt a slightly different setting.

Fix $N_{\textrm{nodes}}\in\mathbb{N}$  the number of desired nodes. Let $\Omega(\rho,\sigma)\subset\R^d$ be the
annulus of $\R^d$ of
radii $\rho$ and $\sigma$:
\[
\Omega(\rho,\sigma)=\{x\in\R^d:\rho\leq |x|<\sigma\}.
\]
Given $u\in H_{\textrm{rad}}^1(\mathbb R^d)$, denote by $u_{|[\rho,\sigma]}$ the restriction of $u$ to $\Omega(\rho,\sigma)$, i.e.
   \[
u_{|[\rho,\sigma]} (x)=
\begin{cases}
  u(x)&\text{if } x\in \Omega(\rho,\sigma),\\
  0 & \text{otherwise.}
\end{cases}
     \]
Denote the set of functions having at least $N_{\textrm{nodes}}+1$ nodal components by 
  \begin{multline*}
    \mathcal Z_{N_{\textrm{nodes}}}:=\big\{ u \in H_{\textrm{rad}}^1(\mathbb R^d),\exists (\rho_k)_{k=1,\dots, N_{\textrm{nodes}}}\in u^{-1}(0),\\ 0 = \rho_0 < \rho_1<\ldots<\rho_{N_{\textrm{nodes}}+1} = \infty, \,\, u_{|[\rho_k,\rho_{k+1}]}\neq 0 
    ,\text{ for }k=0,\dots,N_{\textrm{nodes}},
     \\
       \sign{u(\rho_k^+)}
       \cdot \sign{u(\rho_k^-)}< 0 
       \text{ for }k=1,\dots, N_{\textrm{nodes}}
       \big\},
     \end{multline*}
    where by $\sign{u(\rho_k^+)}$ we mean the limit of $\sign{u(x)}$ when $x\to\rho_k$, $x>\rho_k$, $u(x)\neq 0$ (i.e. the condition requires $u$ to change sign around $\rho_k$).
  For $u\in \mathcal Z_{N_{\textrm{nodes}}}$, we denote the restriction of $u$ to $\Omega(\rho_k^n,\rho_{k+1}^n )$ by
  \[
u^k:=  u_{|[\rho_k,\rho_{k+1}]}.
    \]
Define
\begin{equation*}
\mathcal{N}_{N_{\textrm{nodes}}} : = \left\{u\in \mathcal Z_{N_{\textrm{nodes}}},\; I(u^k) = 0,\; 0\leq k \leq N_{\textrm{nodes}}  \right\},
\end{equation*}
%where $0 = \rho_0 < \rho_1<\ldots<\rho_{N_{\textrm{nodes}}+1} = \infty$ are depending on $u$,
and
\[
m_{\mathcal{N}_{N_{\textrm{nodes}}}}=\inf\{S(v):v\in\mathcal{N}_{N_{\textrm{nodes}}}\}.
\]
Then the following result gives the existence of a minimizer for
$m_{\mathcal{N}_{N_{\textrm{nodes}}}}$ which corresponds to a higher order excited state.

\begin{theorem}
\label{thm:least-energy-multinodal}
For every sequence $(u_n)\in\mathcal{N}_{N_{\textrm{nodes}}}$ such that 
\[
\lim_{n\to\infty}S(u_n)=m_{\mathcal{N}_{N_{\textrm{nodes}}}}
\]
there exists $u_\infty\in\mathcal{N}_{N_{\textrm{nodes}}}$ such that, up to a subsequence, 
\[
\lim_{n\to\infty}\norm{u_n-u_\infty}_{H^1}=0.
\] 
Moreover, $u_\infty$ is solution of \eqref{eq:snls} with
exactly $N_{\textrm{nodes}}+1$ nodal domains and we say
that $u_\infty$ is a \emph{lower radial $N_{\textrm{nodes}}$-nodes excited state}.
\end{theorem}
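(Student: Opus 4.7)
The argument extends the strategy of Theorem~\ref{thm:least-energy-nodal} to arbitrarily many nodal components; the principal difficulty is to prevent the nodes from collapsing, escaping to infinity, or reaching the origin. Let $(u_n) \subset \mathcal{N}_{N_{\textrm{nodes}}}$ be a minimizing sequence. Each restriction $u_n^k$ belongs to $\mathcal{N}$, so standard Nehari arguments based on \ref{item:h2}--\ref{item:h3} furnish uniform lower bounds $\|u_n^k\|_{H^1} \geq c_0 > 0$ and $\|u_n^k\|_{L^{p+1}} \geq c_1 > 0$. Together with $S(u_n) = \sum_k S(u_n^k) \to m_{\mathcal{N}_{N_{\textrm{nodes}}}}$ and $S(u_n^k) \geq m_{\mathcal{N}} > 0$, this yields a uniform $H^1$ upper bound on each $u_n^k$ (using \ref{item:h6} via the coercive functional $J := S - \frac{1}{\theta} I$, which agrees with $S$ on $\mathcal{N}$). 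Radial Strauss compactness then provides a subsequence with $u_n \rightharpoonup u_\infty$ weakly in $H^1_{\textrm{rad}}$ and strongly in $L^q$ for $2 < q < 2^*$, while $\rho_k^n \to \rho_k^\infty \in [0,\infty]$.

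The hardest step is to rule out degenerate nodal limits. If some $\rho_k^n \to \infty$, then $\Omega_k^n$ sits at infinity; Strauss' radial decay $|u_n(x)| \leq C|x|^{-(d-1)/2}$ combined with the interpolation $\|u\|_{L^{p+1}}^{p+1} \leq \|u\|_\infty^{p-1}\|u\|_{L^2}^2$ forces $\|u_n^k\|_{L^{p+1}} \to 0$, contradicting the lower bound. If two successive nodes collide or if $\rho_1^n \to 0$, then $|\Omega_k^n| \to 0$; Hölder together with the Sobolev embedding gives $\|u_n^k\|_{L^{p+1}}^{p+1} \leq |\Omega_k^n|^{1-(p+1)/2^*} C \|u_n^k\|_{H^1}^{p+1} \to 0$, again a contradiction. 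Hence $0 < \rho_1^\infty < \ldots < \rho_{N_{\textrm{nodes}}}^\infty < \infty$, and each $u_n^k$ converges strongly in $L^q$ to a nontrivial $u_\infty^k$ on the limit annulus $\Omega_k^\infty$, preserving the alternating sign pattern.

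To pass to the limit in the Nehari constraints, $I(u_n^k) = 0$ reads $\|u_n^k\|_{H^1}^2 = \int f(u_n^k) u_n^k \, dx$, and strong $L^{p+1}$ convergence gives $\|u_n^k\|_{H^1}^2 \to \int f(u_\infty^k) u_\infty^k$; weak lower semicontinuity then implies $I(u_\infty^k) \leq 0$. To exclude strict inequality, introduce $H(s) := \frac{1}{2} s f(s) - F(s)$, which is strictly increasing in $|s|$ under \ref{item:h5}, and use the identity $S|_{\mathcal{N}} = \int H(\cdot)\,dx$. If some $t_k \in (0,1)$ were required to place $t_k u_\infty^k$ on $\mathcal{N}$, then
\[
S\!\left(\sum_k t_k u_\infty^k\right) = \sum_k \int H(t_k u_\infty^k)\,dx < \sum_k \int H(u_\infty^k)\,dx = \lim_{n\to\infty} S(u_n) = m_{\mathcal{N}_{N_{\textrm{nodes}}}},
\]
contradicting the fact that $\sum_k t_k u_\infty^k \in \mathcal{N}_{N_{\textrm{nodes}}}$. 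Therefore $I(u_\infty^k) = 0$ for every $k$, the $H^1$ norms of each piece converge, and weak convergence upgrades to strong; in particular $u_\infty \in \mathcal{N}_{N_{\textrm{nodes}}}$ is a minimizer.

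Finally, a standard constrained-variation argument---perturbing $u_\infty$ by test functions supported in each annulus and projecting back onto each Nehari constraint via the non-degeneracy $\langle I'(u_\infty^k), u_\infty^k \rangle \neq 0$---shows that $u_\infty^k$ solves \eqref{eq:snls} in the interior of $\Omega_k^\infty$; the freedom to vary the node positions in the minimization enforces the matching of normal derivatives across the nodes, so that $u_\infty$ is a global weak solution. Elliptic regularity and the strong maximum principle then imply that each $u_\infty^k$ has constant sign in the interior of $\Omega_k^\infty$, so $u_\infty$ has exactly $N_{\textrm{nodes}}+1$ nodal domains. The node-control step is the main obstacle; the $C^1$-matching across nodes is a secondary subtlety.
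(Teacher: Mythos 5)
The compactness half of your argument is sound and runs parallel to the paper's: the paper gets non-vanishing of each nodal piece directly from Lemma \ref{lem:nehari-seq} (Strauss compactness built into the radial Nehari setting), while you control the nodes explicitly (no escape to infinity, no collapse, no collision with the origin) — a legitimate, somewhat more hands-on route. Your way of upgrading $I(u_\infty^k)\leq 0$ to $I(u_\infty^k)=0$ via the strictly increasing function $H(s)=\tfrac12 sf(s)-F(s)$ and the identity $S|_{\mathcal N}=\int H$ is a valid variant of the paper's argument, which instead introduces scaling factors $s^k$ with $I(s^k u_\infty^k)=0$ and forces $s^k=1$ through a comparison of $H^1$ norms; both yield strong convergence and $u_\infty\in\mathcal N_{N_{\textrm{nodes}}}$ a minimizer.

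The genuine gap is in the criticality step. Perturbations supported in a fixed annulus with the per-annulus Nehari constraint only show that $u_\infty^k$ solves \eqref{eq:snls} in the interior of $\Omega(\rho_k,\rho_{k+1})$ with Dirichlet data; this does not make $u_\infty$ a weak solution on $\R^d$, because a mismatch of the one-sided radial derivatives at a node $\rho_k$ would leave a surface-measure term in $-\Delta u_\infty$. Your sentence ``the freedom to vary the node positions enforces the matching of normal derivatives'' is precisely the nontrivial point and is asserted rather than proved: one would need, e.g., a Hadamard/Pohozaev-type computation of the derivative of the glued action with respect to the radii, or a corner-smoothing deformation that strictly lowers the action — this is the delicate part of the Bartsch–Willem gluing construction \cite{BaWe93}, which the paper deliberately bypasses. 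Since $\mathcal N_{N_{\textrm{nodes}}}$ is not a $\mathcal C^1$ manifold, the paper instead proves $S'(u_\infty)=0$ globally by contradiction, using Willem's quantitative deformation lemma (Lemma \ref{lem:willem}) together with a Brouwer degree argument on the map $(s^0,\dots,s^{N_{\textrm{nodes}}})\mapsto (I(h^0),\dots,I(h^{N_{\textrm{nodes}}}))$ over the cube $[\tfrac12,\tfrac32]^{N_{\textrm{nodes}}+1}$, exploiting that $S(\sum_k s^k u_\infty^k)$ is strictly maximized at $s^k\equiv 1$ (Lemma \ref{lem:scaling}); the node matching then comes for free because criticality is obtained with respect to all $H^1_{\mathrm{rad}}$ perturbations at once. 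To complete your proof you must either supply the derivative-matching argument at the optimal radii or replace your last step by a deformation/degree argument of this kind. (Once global criticality is in hand, your maximum-principle conclusion that there are exactly $N_{\textrm{nodes}}+1$ nodal domains is fine, and is a mild variant of the paper's replacement argument.)
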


\begin{remark}
A related result, consisting in pasting together solutions of the Nehari problem on annuli with Dirichlet boundary conditions and then optimizing over the radii, was obtained by Bartsch and Willem, see \cite{BaWe93}. 
\end{remark}

The rest of this section is devoted to the proof of Theorems \ref{thm:least-energy-nodal} and \ref{thm:least-energy-multinodal}. We start with some preliminary lemmas.

\begin{lemma}
\label{lem:scaling}
The constant $0$ is a local minimum for $S$. 
  Let $u\in H^1(\R^d)\setminus\{0\}$. There exists a unique $s_u\in
  (0,\infty)$ such that $I(s_u u)=0$. Moreover, $S(s_u
  u)=\max_{s\in(0,\infty)}S(su)>0$. If $I(u)<0$, then $s_u<1$, whereas
  if $I(u)>0$, then $s_u>1$ and $S(u)>0$. 
\end{lemma}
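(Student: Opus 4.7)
The plan is to derive everything from a careful study of the real-valued function $g(s):=S(su)$ for fixed $u\in H^1(\R^d)\setminus\{0\}$, using the identity $sg'(s)=I(su)$ coming from $I=\dual{S'(\cdot)}{\cdot}$. For the first assertion that $0$ is a local minimum of $S$, I would combine \ref{item:h3} (yielding $|F(z)|\leq \eps z^2$ for $|z|$ small) with \ref{item:h2} (yielding $|F(z)|\lesssim |z|^{p+1}$ for $|z|$ large) into the pointwise bound $|F(z)|\leq \eps z^2 + C_\eps |z|^{p+1}$. Combined with the subcritical Sobolev embedding $H^1\hookrightarrow L^{p+1}$, this gives $S(v)\geq \tfrac12\norm{v}_{H^1}^2 - \eps\norm{v}_{L^2}^2 - C_\eps\norm{v}_{H^1}^{p+1}$, which is strictly positive for $v\neq 0$ of sufficiently small $H^1$-norm (after choosing $\eps<\tfrac12$).

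The heart of the argument is the study of the auxiliary quotient
\[
\phi(s):=\frac{I(su)}{s^2} = \norm{\nabla u}_{L^2}^2 + \norm{u}_{L^2}^2 - \int_{\R^d}\frac{f(su)}{su}\,u^2\,dx,\qquad s>0.
\]
I would show that $\phi$ is continuous and strictly decreasing on $(0,\infty)$, with $\phi(s)\to\norm{u}_{H^1}^2>0$ as $s\to 0^+$ and $\phi(s)\to -\infty$ as $s\to\infty$. Strict monotonicity is where \ref{item:h5} enters: since $f$ is odd by \ref{item:h1} and $t\mapsto f(t)/t$ is strictly increasing on $(0,\infty)$, the integrand is strictly increasing in $s$ at every $x$ in the positive-measure set $\{u\neq 0\}$. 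The limit as $s\to 0^+$ follows from \ref{item:h3} by dominated convergence, with majorant supplied by \ref{item:h2}. The limit as $s\to\infty$ is the principal technical step: using \ref{item:h6} one shows that $t\mapsto F(t)/t^\theta$ is non-decreasing on $(0,\infty)$, and \ref{item:h4} gives $F(\xi_0)>0$, hence $f(t)/t\geq \theta F(t)/t^2\to\infty$; Fatou's lemma applied on $\{u\neq 0\}$ then yields $\phi(s)\to-\infty$. Intermediate value produces a unique $s_u\in(0,\infty)$ with $I(s_u u)=s_u^2\phi(s_u)=0$.

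To conclude, since $g'(s)=I(su)/s=s\phi(s)$, the sign of $g'$ flips exactly once on $(0,\infty)$, so $g$ is strictly increasing on $(0,s_u)$ and strictly decreasing on $(s_u,\infty)$; together with $g(0)=0$ this yields $S(s_u u)=\max_{s>0}S(su)>0$. The sign assertions follow from the observation $I(u)=\phi(1)$: if $I(u)<0$ then $\phi(1)<\phi(s_u)=0$ forces $s_u<1$ by monotonicity; if $I(u)>0$ then $s_u>1$, so $1\in(0,s_u)$ lies in the strictly increasing range of $g$ and hence $S(u)=g(1)>g(0)=0$. I expect the main obstacle to be a clean dominated/Fatou justification of the two limits of $\phi$, particularly extracting divergence of $f(t)/t$ at infinity from the combination of \ref{item:h4} and \ref{item:h6}; everything else is essentially bookkeeping.
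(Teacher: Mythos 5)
Your proof is correct and takes essentially the same route as the paper: both analyze the fibering map $s\mapsto S(su)$ via the identity $s\,\frac{d}{ds}S(su)=I(su)$, using (H5) for uniqueness of the critical point, (H3)/(H2) for the behavior near $s=0$, and (H6) (seeded by (H4)) for the behavior as $s\to\infty$. Your reorganization through the strictly decreasing quotient $I(su)/s^2$ and the intermediate value theorem, together with the explicit verification of the local-minimum claim and the sign assertions, is just a more detailed bookkeeping of the same argument.
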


\begin{proof}
Let $u\in H^1(\R^d)\setminus\{0\}$ and define $h:(0,\infty)\to\R$ by 
\[
h(s):=S(su)=\frac{s^2}{2}\norm{u}_{H^1}^2-\int_\R F(su)dx.
\]
Since $F$ is differentiable, so is $h$ and we have
\[
h'(s)=s \norm{u}_{H^1}^2-\int_\R f(su)udx.
\]
Remark that $sh'(s)=I(su)$. Due to \ref{item:h5}, the derivative $h'$ can vanish
only once in $(0,\infty)$. Indeed, assume by contradiction that
there exist $0<s_1<s_2$ such that $h'(s_1)=h'(s_2)=0$. Then we have
\[
\int_\R \frac{f(s_1u)}{s_1u}u^2dx=\int_\R \frac{f(s_2u)}{s_2u}u^2dx.
\]
Since by \ref{item:h5} $s\to\frac{f(s)}{s}$ is increasing, we have a
contradiction. Since $f(s)=o(s)$ for $s\to0$, we have $S(su)>0$ if
$s$ is small enough. On the other hand, \ref{item:h6} implies that 
\[
\partial_s\frac{F(s)}{s^2}=\frac{f(s)s-2F(s)}{s^3}>\frac{(\theta-2)}{s}\frac{F(s)}{s^2},
\] 
i.e. $F$ is superquadratic and therefore for $s$ large we must have
$S(su)<0$. Hence $h'$ vanishes exactly once at $s_u$, $h'(s)>0$ for
$s<s_u$ and $h'(s)<0$ for $s>s_u$. Moreover,
$h(s_u)=\max_{s\in(0,\infty)}h(s)$. Since $S(su)=h(s)$ and
$I(su)=sh'(s)$, this concludes the proof. 
\end{proof}

Define $\mathcal N_{\mathrm{rad}}:=\mathcal N\cap
H_{\mathrm{rad}}^1(\R^d)$. We have the following compactness result. 

\begin{lemma}\label{lem:nehari-seq}
  Let $d\geq 2$. Let $(u_n)\subset \mathcal N_{\mathrm{rad}}$ and assume that $S(u_n)$ is bounded. Then
  $(u_n)$ is bounded in $H^1(\R^d)$ and
there exists $u_\infty\in
  H_{\mathrm{rad}}^1(\R^d)\setminus\{0\}$ such that  (up to
  extraction of a subsequence) we have
\[
u_n\rightharpoonup u_\infty\text{ weakly in }H_{\mathrm{rad}}^1(\R).
\]
Moreover, there exists $s_\infty>0$ such that $s_\infty
u_\infty\in\mathcal N_{\mathrm{rad}}$
 and $S(s_\infty u_\infty)\leq
\liminf_{n\to\infty} S(u_n)$.
\end{lemma}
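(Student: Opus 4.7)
The plan is to proceed in three steps: establish $H^1$-boundedness of $(u_n)$ via the Ambrosetti--Rabinowitz condition, extract a non-trivial radial weak limit by exploiting Strauss' compact embedding, and finally construct the scaling $s_\infty$ together with the semicontinuity inequality using Lemma \ref{lem:scaling}. For boundedness, since $u_n\in\mathcal N_{\mathrm{rad}}$ we have $I(u_n)=0$, hence
\[
S(u_n)=S(u_n)-\frac{1}{\theta}I(u_n)=\left(\frac{1}{2}-\frac{1}{\theta}\right)\norm{u_n}_{H^1}^2+\int_{\R^d}\left(\frac{f(u_n)u_n}{\theta}-F(u_n)\right)dx.
\]
Hypothesis \ref{item:h6} makes the integrand non-negative, so the bounded sequence $S(u_n)$ controls $\norm{u_n}_{H^1}$. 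Since $H_{\mathrm{rad}}^1(\R^d)$ is a closed subspace of $H^1(\R^d)$ (hence weakly closed), I can extract a subsequence with $u_n\rightharpoonup u_\infty\in H_{\mathrm{rad}}^1(\R^d)$ weakly in $H^1$, and by Strauss' lemma (using $d\geq 2$ and $p+1\in(2,2^*)$) strongly in $L^{p+1}(\R^d)$.

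To show $u_\infty\neq 0$, I would use \ref{item:h2}--\ref{item:h3}: for every $\eta>0$ there exists $C_\eta>0$ with $\abs{f(t)t}\leq \eta t^2+C_\eta\abs{t}^{p+1}$. Applying this to $I(u_n)=0$ and combining with $\norm{u_n}_{L^2}\leq \norm{u_n}_{H^1}$ gives, for $\eta<1$,
\[
\norm{u_n}_{H^1}^2\leq \frac{C_\eta}{1-\eta}\norm{u_n}_{L^{p+1}}^{p+1}.
\]
The Sobolev embedding $\norm{u_n}_{L^{p+1}}\leq C\norm{u_n}_{H^1}$ then forces $\norm{u_n}_{H^1}\geq c_0>0$, which in turn gives $\norm{u_n}_{L^{p+1}}\geq c_1>0$ uniformly in $n$. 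Passing to the limit via the strong $L^{p+1}$-convergence yields $\norm{u_\infty}_{L^{p+1}}\geq c_1>0$, and hence $u_\infty\neq 0$.

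Lemma \ref{lem:scaling} then provides a unique $s_\infty>0$ with $s_\infty u_\infty\in\mathcal N_{\mathrm{rad}}$. For the final inequality, I would show $\int_{\R^d}F(s_\infty u_n)\,dx\to \int_{\R^d}F(s_\infty u_\infty)\,dx$ by writing $F(s_\infty u_n)-F(s_\infty u_\infty)$ as a mean-value integral of $f$ along the segment between the two, bounding $\abs{f}$ by $\eta\abs{t}+C_\eta\abs{t}^p$, and handling the quadratic remainder via Cauchy--Schwarz using $L^2$-boundedness (so it contributes only $\mathcal{O}(\eta)$) and the $\abs{t}^p$ remainder via H\"older using strong $L^{p+1}$-convergence (so it tends to $0$). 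Combined with weak lower semicontinuity of $\norm{\cdot}_{H^1}$, this gives $S(s_\infty u_\infty)\leq \liminf_n S(s_\infty u_n)$. Finally, since $u_n\in\mathcal N$, Lemma \ref{lem:scaling} shows $s\mapsto S(su_n)$ is maximized at $s=1$, so $S(s_\infty u_n)\leq S(u_n)$, and the desired inequality follows. The main obstacle is the non-vanishing of $u_\infty$, which is exactly where the radial framework is indispensable: without it, translation-induced vanishing as in \eqref{eq:example} would be possible, and Strauss' compact embedding is the ingredient that rules this out. A secondary subtlety is that only weak $L^2$-convergence is available, so the low-order quadratic part of $F$ cannot be treated by strong convergence and must be absorbed into the small parameter $\eta$ by using the superlinearity $f(t)=o(t)$ at zero.
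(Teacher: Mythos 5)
Your proof is correct, but it reaches the two key intermediate facts by a genuinely different route than the paper. For boundedness, you use the Ambrosetti--Rabinowitz identity $S(u_n)=S(u_n)-\tfrac1\theta I(u_n)\geq\bigl(\tfrac12-\tfrac1\theta\bigr)\norm{u_n}_{H^1}^2$, a direct coercivity estimate relying only on the pointwise sign of $sf(s)-\theta F(s)$ from \ref{item:h6}; the paper instead argues by contradiction, normalizing $v_n=u_n/\norm{u_n}_{H^1}$, showing $v_\infty\neq0$ via Lemma \ref{lem:scaling} (taking $s$ large) and weak continuity of $u\mapsto\int F(u)\,dx$, and then contradicting $0\leq S(u_n)/\nu_n^2=\tfrac12-\nu_n^{-2}\int F(\nu_n v_n)\,dx$ through Fatou's lemma and the superquadraticity of $F$. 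Your version is shorter and needs neither Fatou nor a.e.\ convergence. For the non-vanishing of the limit, you use the uniform lower bound on the Nehari manifold ($\norm{u_n}_{L^{p+1}}\geq c_1>0$, obtained from $I(u_n)=0$, the splitting $|f(t)t|\leq\eta t^2+C_\eta|t|^{p+1}$ given by \ref{item:h2}--\ref{item:h3}, and Sobolev) combined with the strong $L^{p+1}$ convergence from Strauss' lemma; the paper instead reuses its scaling argument (if the weak limit were $0$, then $S(u_n)\geq S(su_n/\nu_n)\to s^2/2$ for arbitrary $s$, contradicting boundedness of $S(u_n)$). Both devices are standard, both hinge on the compact radial embedding, and your quantitative lower bound has the small extra benefit of showing that $m_{\mathcal N}>0$. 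The final step --- existence of $s_\infty$ from Lemma \ref{lem:scaling}, weak lower semicontinuity of the quadratic part, weak continuity of the nonlinear part, and $S(s_\infty u_n)\leq S(u_n)$ because $s\mapsto S(su_n)$ is maximized at $s=1$ --- coincides with the paper's; you merely spell out the $\eta$-splitting estimate that the paper compresses into the phrase ``weak continuity of $u\mapsto\int F(u)\,dx$''.
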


\begin{proof}
  Take a sequence $(u_n)\in \mathcal N_{\mathrm{rad}}$ and assume that $S(u_n)$ is
  bounded. Arguing by contradiction, we assume that
  $\nu_n:=\norm{u_n}_{H^1}\to\infty$. Define a sequence $(v_n)\subset
  H_{\mathrm{rad}}^1(\R^d)$ by 
\[
v_n:=\frac{u_n}{\nu_n}.
\]
Then $(v_n)$ is bounded in $H_{\mathrm{rad}}^1(\R^d)$ and  there exists
$v_\infty\in H_{\mathrm{rad}}^1(\R^d)$ such that $v_n\rightharpoonup v_\infty$ weakly
in $H_{\mathrm{rad}}^1(\R^d)$. We claim that $v_\infty\neq 0$. Arguing again by
contradiction, assume that $v_\infty=0$. By Lemma \ref{lem:scaling}, for any $s>0$ and $n$ large enough, we have
\[
S(u_n)=S(\nu_nv_n)\geq S(sv_n)=\frac{s^2}{2}-\int_\R F(sv_n)dx.
\]
Since $d\geq2$, the injection $H_{\mathrm{rad}}^1(\R^d)\hookrightarrow
L^q(\R^d)$ is compact for any $2<q<2^*$. Combined with
\ref{item:h2}, this implies weak continuity of $u\to
\int_{\R^d}F(u)dx$. Since we assumed $v_\infty=0$, for $n$ large we have
\[
S(u_n)\geq \frac{s^2}{4},
\]
which is a contradiction since $S(u_n)$ is bounded and $s$ can be
chosen a large as desired. Hence $v_\infty\neq 0$. Moreover, since $I(u_n)=0$,
we have
\begin{equation}
\label{eq:to-contradict}
0\leq \frac{S(u_n)}{\nu_n^2}=\frac12-\frac{1}{\nu_n^2}\int_{\R}F(\nu_n v_n)dx.
\end{equation}
From \ref{item:h6}, we have 
\(
F(s)\geq s^\theta,
\)
thus 
\[
\lim_{s\to\infty}\frac{F(s)}{s^2}=\infty.
\]
Recall that upon extraction of subsequences, $v_n\rightharpoonup v_\infty\neq
0$ weakly in $H_{\mathrm{rad}}^1(\R^d)$ and $v_n(x)\to v_\infty(x)$ a.e. By Fatou's Lemma,
this implies
\[
\frac{1}{\nu_n^2}\int_{\R}F(\nu_n v_n)dx=\int_{\R}\frac{F(\nu_n
  v_n)}{(\nu_n v_n)^2}v_n^2dx\to\infty\text{ as }n\to\infty.
\]
This leads to a contradiction in \eqref{eq:to-contradict}. Therefore,
$(\nu_n)=(\norm{u_n}_{H^1})$ has to remain bounded. As a consequence,
there exists $u_\infty\in H_{\mathrm{rad}}^1(\R^d)$ such that $u_n\rightharpoonup
u_\infty$. We can prove that $u_\infty\neq 0$ in the same way as we
did for $v_\infty$. 
Moreover, there exists $s_\infty$ such that
$s_\infty u_\infty\in\mathcal N_{\mathrm{rad}}$ and we have
\[
S(s_\infty u_\infty)\leq \liminf_{n\to\infty}S(s_\infty u_n)\leq \liminf_{n\to\infty}S(u_n).
\] 
This concludes the proof. 
\end{proof}

The proofs of Theorem \ref{thm:least-energy-nodal} and Theorem \ref{thm:least-energy-multinodal} follow from a similar line of arguments. We give the details for the proof of Theorem \ref{thm:least-energy-multinodal}, and we will only highlight the differences for the proof of Theorem \ref{thm:least-energy-nodal}.

\begin{proof}[Proof of Theorem \ref{thm:least-energy-multinodal}]
 Let $(u_n)$ be a minimizing sequence for $m_{\mathcal
   N_{N_{\textrm{nodes}}}}$, i.e. $(u_n)\subset \mathcal
   N_{N_{\textrm{nodes}}}$ and $S(u_n)\to m_{\mathcal
     N_{N_{\textrm{nodes}}}}$ as $n\to \infty$.

   Since $(u_n)\subset \mathcal
   N_{N_{\textrm{nodes}}}$, there exist $0=\rho_0<\rho_1^n<\dots<\rho_{N_{\textrm{nodes}}}^n<\rho_{N_{\textrm{nodes}}+1}=\infty$ such that $(\rho_1^n,\dots,\rho_{N_{\textrm{nodes}}}^n)\subset u_n^{-1}(0)$ and for all $k=0,\dots, N_{\textrm{nodes}}$
   \[
u_n^k:=  (u_n)_{|[\rho_k^n,\rho_{k+1}^n]}\in\mathcal N.
     \]
%By construction, $\rho_1,\dots,\rho_{N_{\textrm{nodes}}}$ can be chosen such that $(u_n^k)\subset\mathcal N$. 
  % Since
  % $(u_n^k)\subset\mathcal N$,
   By Lemma \ref{lem:nehari-seq} the sequence
 $(u_n^k)$ is bounded in $H_{\mathrm{rad}}^1(\R^d)$ and there
 exists $ u_\infty^k\in H_{\mathrm{rad}}^1(\R^d)\setminus\{0\}$ such that
 $u_n^k\rightharpoonup  u_\infty^k$ weakly in $H_{\mathrm{rad}}^1(\R^d)$
 and a.e. In particular, pointwise convergence implies
 $ u_\infty^j u_\infty^k=0$ a.e. whenever $j\neq k$. Let $s^k$ be
 such that $I(s^k u_\infty^k)=0$ and define 
\[
u_\infty=\sum_{k=0}^{N_{\textrm{nodes}}}s^k u_\infty^k.
\]
By construction $u_\infty\in\mathcal N_{N_{\textrm{nodes}}}$. Moreover, 
\[
S(u_\infty)=\sum_{k=0}^{N_{\textrm{nodes}}}S(s^k u_\infty^k)\leq
\liminf_{n\to\infty}\sum_{k=0}^{N_{\textrm{nodes}}}S(u_n^k)=\lim_{n\to\infty}S(u_n)=m_{\mathcal
N_{N_{\textrm{nodes}}}}.
\]
This proves  that $u_\infty$ is a minimizer for $m_{\mathcal
N_{N_{\textrm{nodes}}}}$. 

Let us show that in fact $s^k=1$ for $k=0,\dots, N_{\textrm{nodes}}$ and the sequence $(u_n)$ converges strongly in
$H_{\mathrm{rad}}^1(\R^d)$ toward $u_\infty$. By \ref{item:h2} and Strauss'
Lemma, the functional $u\mapsto \int_{\R^d}f(u)udx$ is weakly
continuous on $H_{\mathrm{rad}}^1(\R^d)$. This implies that for all  $k=0,\dots, N_{\textrm{nodes}}$, we have
\[
I( u_\infty^k)\leq \liminf_{n\to\infty} I(u_n^k)=0.
\]
Hence by Lemma \ref{lem:scaling}  we have $s^k\leq 1$. Moreover
\[
S(u_\infty) =m_{\mathcal N_{N_{\textrm{nodes}}}}= \lim_{n\to\infty}S(u_n)= \lim_{n\to\infty}\sum_{k=0}^{N_{\textrm{nodes}}}S(u_n^k)
%=m_{\mathcal N_{N_{\textrm{nodes}}}}
\]
and weak continuity of the nonlinear part of $S$
implies
\begin{equation}
\label{eq:follows}
\norm{u_\infty}_{H^1}^2=\liminf_{n\to \infty}\sum_{k=0}^{N_{\textrm{nodes}}}\norm{u_n^k}_{H^1}^2.
\end{equation}
Moreover
\begin{multline*}
  \liminf_{n\to \infty}\sum_{k=0}^{N_{\textrm{nodes}}}\norm{u_n^k}_{H^1}^2
  \geq \sum_{k=0}^{N_{\textrm{nodes}}}\norm*{ u_\infty^k}_{H^1}^2
  =\sum_{k=0}^{N_{\textrm{nodes}}}\frac{1}{(s^k)^2}\norm{s^k u_\infty^k}_{H^1}^2
  \\
  \geq \frac{1}{\max_{k=0,\dots,N_{\textrm{nodes}}}(s^k)^2}
  \sum_{k=0}^{N_{\textrm{nodes}}}\norm{s^k u_\infty^k}_{H^1}^2
  =\frac{1}{\max_{k=0,\dots,N_{\textrm{nodes}}}(s^k)^2}\norm{u_\infty}_{H^1}^2\\
=\frac{1}{\max_{k=0,\dots,N_{\textrm{nodes}}}(s^k)^2} \liminf_{n\to \infty}\sum_{k=0}^{N_{\textrm{nodes}}}\norm{u_n^k}_{H^1}^2,
\end{multline*}
where the first inequality is from weak convergence and the last
equality follows from \eqref{eq:follows}.  Since we already know that $s^k\leq 1$, this
implies that $s^k=1$ for any $k=0,\dots,N_{\textrm{nodes}}$ and strong convergence of $(u_n)$ towards
$u_\infty$ in $H^1(\R^d)$. 

We now show that $u_\infty$ is a critical point of $S$. Recall that
$\mathcal N_{N_{\textrm{nodes}}}$ is not a manifold and we cannot use
a Lagrange multiplier argument for the minimizers of $m_{\mathcal
N_{N_{\textrm{nodes}}}}$. Instead, we shall use the quantitative
deformation lemma of Willem \cite[Lemma 2.3]{Wi96}, which we recall in
Appendix (see Lemma \ref{lem:willem}). 
Arguing by contradiction, we assume that $S'(u_\infty)\neq 0$. Then there exist $\delta,\mu>0$ such that
\[
\norm{v-u_\infty}_{H^1}\leq 3\delta\implies\norm{S'(v)}_{H^1}\geq \mu.
\]
Define $D=\left[\frac12,\frac32\right]^{N_{\textrm{nodes}}+1}$ and $g:D\to H^1(\mathbb R^d)$ by
\[
  g(s^0,\dots,s^{N_{\textrm{nodes}}})=\sum_{k=0}^{N_{\textrm{nodes}}}s^ku_\infty^k.
  \]
  Let $s^0,\dots,s^{N_{\textrm{nodes}}}\in D\setminus\{(1,\dots,1)\}$.
  Then from 
Lemma \ref{lem:scaling}
we infer that
\[
S(g(s^0,\dots,s^{N_{\textrm{nodes}}}))=\sum_{k=0}^{N_{\textrm{nodes}}}S(s^ku_\infty^k)<\sum_{k=0}^{N_{\textrm{nodes}}}S(u_\infty^k)=m_{\mathcal
N_{N_{\textrm{nodes}}}}.
\]
Consequently, $S(g(s^0,\dots,s^{N_{\textrm{nodes}}}))=m_{\mathcal
N_{N_{\textrm{nodes}}}}$ if and only if $(s^0,\dots,s^{N_{\textrm{nodes}}})=(1,\dots,1)$ and otherwise
$
S(g(s^0,\dots,s^{N_{\textrm{nodes}}}))<m_{\mathcal
N_{N_{\textrm{nodes}}}}.
$
Hence
\[
\beta:=\max_{\partial D} S\circ g<m_{\mathcal
N_{N_{\textrm{nodes}}}}.
\]
Let $\eps:=\min\left(\frac{m_{\mathcal
N_{N_{\textrm{nodes}}}}-\beta}{4},\frac{\mu \delta}{8}\right)$. The
deformation lemma \ref{lem:willem} gives us a deformation $\eta$
verifying 
\begin{itemize}
\item [(a)] $\eta(1,v)=v$ if $v\notin S^{-1}([m_{\mathcal
N_{N_{\textrm{nodes}}}}-2\eps,m_{\mathcal
N_{N_{\textrm{nodes}}}}+\eps])$,
\item [(b)] $S(\eta(1,v))\leq m_{\mathcal
N_{N_{\textrm{nodes}}}}-\eps$ for every $v\in H_{\mathrm{rad}}^1(\R^d)$
such that $\norm{v-u_\infty}_{H^1}\leq \delta$ and $S(v)\leq m_{\mathcal
N_{N_{\textrm{nodes}}}}+\eps$,
\item [(c)] $S(\eta(1,v))\leq S(v)$ for all $v\in
  H^1_{\mathrm{rad}}(\R^d)$. 
\end{itemize}
In particular, we have
\begin{equation}
\label{eq:sw1}
\max_{(s,t)\in D}S(\eta(1, g(s^0,\dots,s^{N_{\textrm{nodes}}})))<m_{\mathcal
N_{N_{\textrm{nodes}}}}.
\end{equation}
To obtain a contradiction we prove that $\eta(1,g(D))\cap \mathcal
N_{N_{\textrm{nodes}}}\neq \emptyset$. Define
\[
  h(s^0,\dots,s^{N_{\textrm{nodes}}}):=\eta(1, g(s^0,\dots,s^{N_{\textrm{nodes}}})).
  \]
Observe that the deformation $\eta$ can be chosen chosen small enough so that it does not affect the number of nodal components of $g(s^0,\dots,s^{N_{\textrm{nodes}}})$, and $h(s^0,\dots,s^{N_{\textrm{nodes}}})$ has the same number of nodal components. We denote these components by $h^k$, $k=0,\dots,N_{N_{\textrm{nodes}}}$. 
We also define
\[
  \begin{aligned}
    \psi_0 (s^0,\dots,s^{N_{\textrm{nodes}}})&:=\left(I(s^0 u_\infty^0),\dots,I(s^{N_{\textrm{nodes}}} u_\infty^{N_{\textrm{nodes}}})\right),\\
   \psi_1 (s^0,\dots,s^{N_{\textrm{nodes}}})&:=\left(I(h^0 (s^0,\dots,s^{N_{\textrm{nodes}}})),\dots,I(h^{N_{\textrm{nodes}}} (s^0,\dots,s^{N_{\textrm{nodes}}}))\right),
  \end{aligned}
\]
As $I(s u_\infty^k)>0$ (resp. $<0$) if $0<s<1$ (resp. $s>1$), the
degree of $\psi_0$ (see e.g. \cite{AmMa06} for the definition and
basic properties of the degree) is $\operatorname{Deg}(\psi_0,D,0)=1$.
From (a) and \eqref{eq:sw1}, we have $g=h$ on 
$\partial D$. Therefore, $\psi_0=\psi_1$ on $\partial D$, which 
implies
$\operatorname{Deg}(\psi_1,D,0)=\operatorname{Deg}(\psi_0,D,0)=1$. Therefore,
there exists $(s^0,\dots,s^{N_{\textrm{nodes}}})\in 
D$ such that $\psi_1 (s^0,\dots,s^{N_{\textrm{nodes}}})=0$. That means
$h (s^0,\dots,s^{N_{\textrm{nodes}}})\in\mathcal N_{N_{\textrm{nodes}}}$, a contradiction with
\eqref{eq:sw1} and the definition of $m_{\mathcal
N_{N_{\textrm{nodes}}}}$. Therefore $u_\infty$ is a critical point of
$S$. 

It remains to prove that $u_\infty$ has exactly $N_{\textrm{nodes}}$ nodes. Assume by the contrary that there exists $k\in\{0,\dots,N_{\textrm{nodes}}\}$ such that $u_\infty^k$ vanishes on $(\rho_k,\rho_{k+1})$. Then $u_\infty^k$ cannot be a minimizer of the action on the Nehari manifold restricted to $[\rho_k,\rho_{k+1}]$, i.e.
\[
S(u_\infty^k)>\min\{u\in H^1(H_{\textrm{rad}}^1(\Omega(\rho_k,\rho_{k+1}),\, u(\rho_k)=u(\rho_{k+1})=0\,I(u)=0\}.
\]
Indeed, by classical arguments, the minimizer can be shown to exist and to be strictly positive inside $\Omega(\rho_k,\rho_{k+1})$, see \cite{BaWe93}. Replacing $u_\infty^k$ by the minimizer in the construction of $u_\infty$, one would obtain a function verifying all required conditions, with a strictly lower energy. This is a contradiction, hence $u_\infty$ cannot vanish inside $(\rho_k,\rho_{k+1})$ and must have exactly $N_{\textrm{nodes}}$ nodes. 
\end{proof}

\begin{proof}[Proof of Theorem \ref{thm:least-energy-nodal}]

The first parts of the proof of Theorem \ref{thm:least-energy-nodal} follow from similar arguments as for Theorem \ref{thm:least-energy-multinodal}, with most of the modifications consisting simply in replacing the index/exponent $k$ by $\pm$. To prove that $u_\infty$ has exactly two nodal domains, the arguments slightly differ and we give some more details. 

Observe first that $u_\infty$ is also a minimizer for
\[
m_{alt} = \inf\left\{ \int_{\R^d}-F(u)+\frac12f(u)udx:I(u^+)\leq 0,\,I(u^-)\leq0,\, u^\pm\neq0\right\}.
\]
By \ref{item:h6}, $2F(s)-f(s)s<0$ for any $s$. As a consequence, the minimizers of $m_{alt}$ verify $I(u^+)=I(u^-)=0$. Indeed, arguing by contradiction and assuming e.g. $I(u^+)<0$, one could replace $u^+$ by $tu^+$ with $0<t<1$ such that $I(tu^+)=0$, which would give a minimizer of $m_{alt}$ for a lower value and provides the contradiction. Assume now that $u_\infty$  has more than one nodal region, i.e that there exists $s_1<s_2<s_3$ such that $u_\infty>0$ on $(0,s_1)$, $u_\infty<0$ on $(s_1,s_2)$ and $u_\infty>0$ on $(s_2,s_3)$. Let $u_1$ be such that $u_1=u_\infty$ on $(0,s_1)$ and $u_1=0$ elsewhere. Define similarly $u_2$ for $(s_1,s_2)$ and $u_3^\pm=u_\infty^\pm$,  on $(s_3,\infty)$. Since $I(u^+)=0$ we either have $I(u_1)\leq0$ or $I(u_3^+)\leq 0$. Without loss of generality, assume that $I(u_1)\leq0$. We may then construct $\tilde u_\infty$ such that $\tilde u_\infty=u_1$ on $(0,s_1)$,  $\tilde u_\infty=u_2$ on $(s_1,s_2)$ and $\tilde u_\infty=0$ on $(s_3,\infty)$. As it is not containing the $u_3^\pm$ parts, $\tilde u_\infty$ would be a minimizer for $m_{alt}$ for a lower value than $u_\infty$, which provides a contradiction and finishes the proof. 
\end{proof}

\section{The shooting method}
\label{sec:shooting}

We describe in this section the shooting method, its theoretical background and its practical implementation. Radial solutions of \eqref{eq:snls} can be obtained as solutions of the ordinary differential equation
\begin{equation}
  \label{eq:1}
  -u''(r)-\frac{d-1}{r}u'(r)+\omega u(r)-f(u(r))=0.
\end{equation}
They should satisfy the boundary conditions
\[
u'(0)=0, \quad \lim_{r\to\infty}u(r)=0.
  \]
  \begin{remark}
   Notice that \eqref{eq:1} does not fit directly into the standard Cauchy-Lipschitz theorem. However, the equation is equivalent to 
\[
(r^{d-1}u'(r))'=r^{d-1}(\omega u(r)-f(u(r))),
\]
which can reformulated (using the initial condition $u'(0)=0$) as
\[
\begin{cases}
u(r)=u(0)+\int_0^rv(s)ds,\\
v(r)=\frac{1}{r^{d-1}}\int_0^rs^{d-1}(\omega u(s)-f(u(s)))ds.
\end{cases}
\]
This system might be proven to have a unique solution by a standard Picard iteration scheme. 
  \end{remark}
  
  It was established in \cite{CoGaYa11} that, under convexity assumptions on $f$ (which hold for example in dimension $d=2$ when $f$ is of power-type), that for any $k\in\mathbb N$, the equation \eqref{eq:1} admits exactly one solution having exactly $k$ nodes. More precisely, it was proved in \cite{CoGaYa11} that there exists an increasing sequence $(\alpha_k)\subset (0,\infty)$ such that for any $k\in\mathbb N$ and for any $\alpha\in (\alpha_{k-1},\alpha_k)$ (with the understanding that $\alpha_{-1}=0$), the solution of the Cauchy Problem
  \begin{equation}
    \label{eq:shoot_to_thrill}
  -u''(r)-\frac{d-1}{r}u'(r)+\omega u(r)-f(u(r))=0, \quad u(0)=\alpha,\quad u'(0)=0,
\end{equation}
denoted by $u(\cdot;\alpha)$, has exactly $k$ nodes on $[0,\infty)$.

Moreover, when (and only when) $\alpha=\alpha_k$, the solution  $u(\cdot;\alpha_k)$ of \eqref{eq:shoot_to_thrill} verifies
\[
  \lim_{r\to\infty} u(r;\alpha_k)=0.
\]
In other words, finding the $k$-th radial state amounts to finding the corresponding $\alpha_k$. We expect that $k\sim \alpha_k^2$ (see Figure \ref{fig:nodsdata}). Observe that when $\alpha\in(\alpha_k,\alpha_{k+1})$, then the solution of the differential equation such that  $u(0)=\alpha$ has exactly $k$ nodes (but does not converge to $0$ at infinity).

The so-called shooting method then consists in a simple application of the bisection principle to the search of $\alpha_k$. The idea is the following. Start with an interval $[\alpha_*,\alpha^*]$ such that 
$u(\cdot;\alpha_*)$ and  $u(\cdot;\alpha^*)$ have respectively $k-1$ and $k$ nodes on $(0,\infty)$. Define the middle of $[\alpha_*,\alpha^*]$ by $c_*=(\alpha_*+\alpha^*)/2$. If the solution $u(\cdot;c_*)$ with initial data $c_*$  has $k-1$ nodes on $(0,\infty)$, then reproduce the procedure on $[c_*,\alpha^*]$, otherwise iterate the procedure on $[\alpha_*,c_*]$. The interval size is divided by two at each step and its bounds converge towards $\alpha_k$.

To compute the solution of \eqref{eq:1}, we rewrite it as a first order system
  \begin{equation}
    \label{eq:2}
U'(r)=
AU+G(r,U(r)),\quad
U(0)=
\begin{pmatrix}
  \alpha\\0
\end{pmatrix}
\end{equation}
where
\[
U(r)=\binom{u(r)}{u'(r)},\quad
A=\begin{pmatrix}
  0&1\\
  \omega&0
\end{pmatrix},\quad
G(r,U(r))=
\begin{pmatrix}
  0\\
  \frac{d-1}{r}u'(r)+f(u(r))
\end{pmatrix}. 
  \]
  The solution of the initial value problem \eqref{eq:2} is then computed using the classical Runge-Kutta 4th order method. The only difficulty concerns the value to affect to $G$ at $r=0$. The singularity can in fact be raised when $u$ is sufficiently regular and the initial condition contains $u'(0)=0$. Indeed, assuming that $u'\in\mathcal C^2$ and writing the Taylor expansion at $0$, we have
  \[
u'(r)=u'(0)+u''(0)r+u'''(\theta)\frac{r^2}{2}=u''(0)r+u'''(\theta)\frac{r^2}{2},\quad \theta\in(0,r).
\]
Assuming that $u$ verifies \eqref{eq:1}, we have
\[
-u''(r)-(d-1)u''(0)+u'''(\theta)\frac{(d-1)r}{2}+\omega u(r)-f(u(r))=0.
  \]
  Letting $r$ tend to $0$, we obtain
  \[
-du''(0)+\omega u(0)-f(u(0))=0.
\]
Therefore, we choose to set
\[
G\left(0,\binom{u(0)}{0}\right)=\binom{0}{(\omega u(0)-f(u(0)))/d}.
  \]

  % \begin{algorithm}[!h]
  %   \begin{algorithmic}
  %     \STATE{blop}
  %   \end{algorithmic}
  % \end{algorithm}

% \begin{algorithm}[!h]
% \caption{Selection of an interval \label{alg:interval_selec}.}
% \begin{algorithmic} 
%   \REQUIRE
% $G:\mathbb R^2\to \mathbb R$, $R>0$, $N\in\mathbb N$, $\texttt{nb\_expected\_nodes}\in\mathbb N$,
%   $b_{\max}>0$
% \STATE{$\texttt{nb\_nodes} \gets -1$}
% \STATE{$b\gets 1$}
% \WHILE{$\texttt{nb\_nodes}\leq \texttt{nb\_expected\_nodes}$ and $b<b_{\max}$}
% \STATE{  $b\gets 2b$}
% \STATE{\textbf{Solve} $Y'=G(t,Y)$}
% \STATE{$\texttt{nb\_nodes}\gets$ number of nodes of $Y$}
% \ENDWHILE
% \IF{$b<b_{\max}$}
% \RETURN{$b$}
% \ELSE
% \PRINT{\texttt{``$b_{\max}$ achieved, try a larger $b_{\max}$ or decrease the number of expected nodes''}}
% \ENDIF
%  \end{algorithmic}
% \end{algorithm}

% \begin{algorithm}[!h]
% \caption{Count number of nodes \label{alg:interval_selec}.}
% \begin{algorithmic} 
%   \REQUIRE
% $Y\in\mathbb R^{N+1}$
% \STATE{$\texttt{nb\_nodes}\gets 0$}
%   \STATE{$s\gets \sign(Y(0))$}
%   \FOR{$j =1,...,N$}
%   \IF{$\sign(Y(j)\neq s$}
%   \STATE{$\texttt{nb\_nodes}\gets \texttt{nb\_nodes}+1$}
%   \STATE{$s\gets -s$}
%   \ENDIF
%   \ENDFOR
%   \RETURN{$\texttt{nb\_nodes}$}
%  \end{algorithmic}
% \end{algorithm}

\begin{algorithm}[!h]
\caption{The Shooting Method Algorithm. \label{alg:shooting}}
\begin{algorithmic} 
  \REQUIRE
$G$, $\eps$, $\texttt{nb\_expected\_nodes}$, $b>0$
\STATE{$a\gets 0$}
\WHILE{$|a-b|<\eps$}
\STATE{$c\gets (a+b)/2$}
  \STATE{\texttt{Solve }$Y'=G(t,Y)$, $Y(0)=c$ with RK4}
  \STATE{$\texttt{nb\_nodes}\gets$ number of nodes of $Y$}
  \IF{$\texttt{nb\_nodes}>\texttt{nb\_expected\_nodes}$}
  \STATE{$b\gets c$}
  \ELSE
  \STATE{$a\gets c$}
  \ENDIF
  \ENDWHILE
 \end{algorithmic}
\end{algorithm}

The algorithm is given in Algorithm \ref{alg:shooting}. 
 Note that in Algorithm \ref{alg:shooting}, it is understood that $b$ has been chosen large enough so that the initial data of the excited state with the required number of nodes lies in $[0,b]$. In pratice, such a $b$ can be obtained by inspection, taking larger and larger values until the solution of \eqref{eq:shoot_to_thrill} with initial data $\alpha=b$ has a sufficiently large number of nodes. 

Examples of solutions computed with the Shooting method are presented in Figure \ref{ref:boundstatesd3}.

\begin{figure}
 \centering
     \begin{subfigure}[b]{0.45\textwidth}
         \centering
         \includegraphics[width=\textwidth]{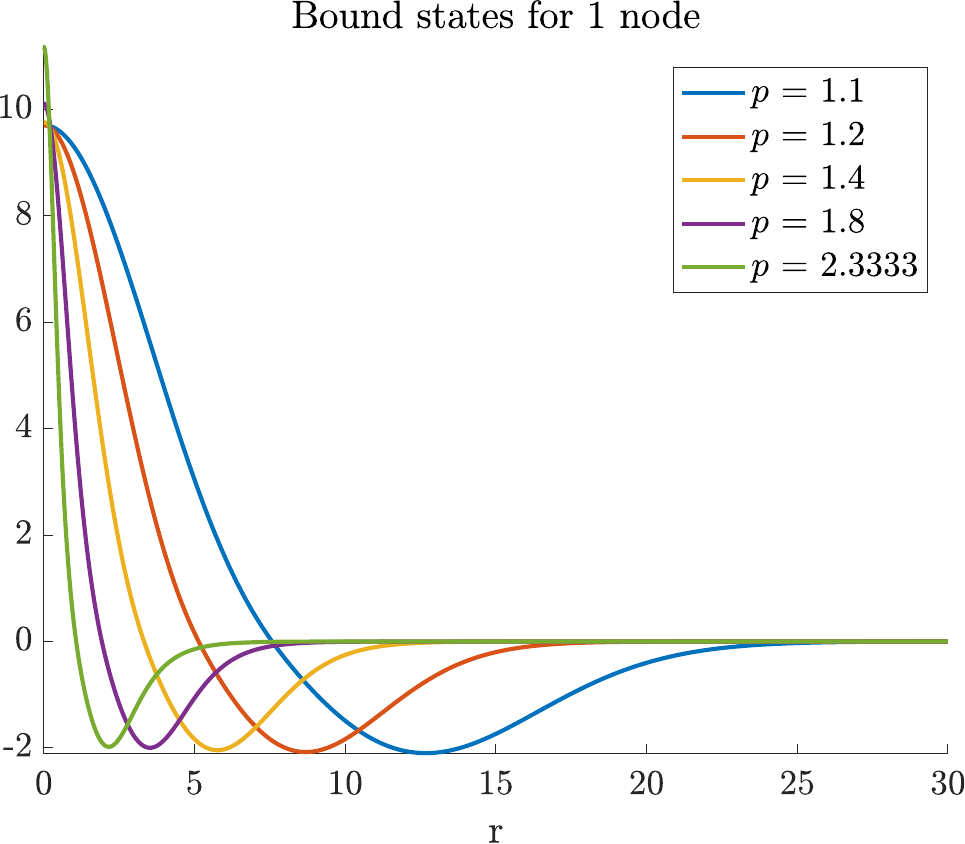}
         \caption{$N_{\textrm{nodes}} = 1$}
     \end{subfigure}
     \hspace{2em}
     \begin{subfigure}[b]{0.45\textwidth}
         \centering
         \includegraphics[width=\textwidth]{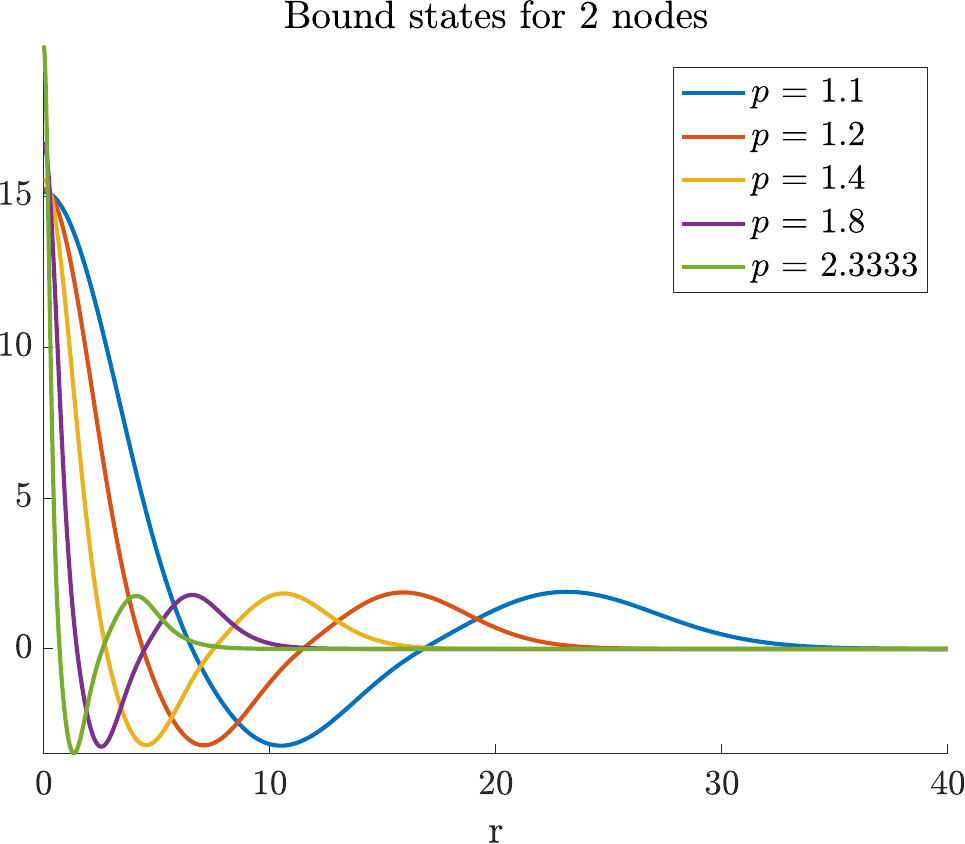}
         \caption{$N_{\textrm{nodes}} = 2$}
     \end{subfigure}
     
     \begin{subfigure}[b]{0.45\textwidth}
         \centering
         \includegraphics[width=\textwidth]{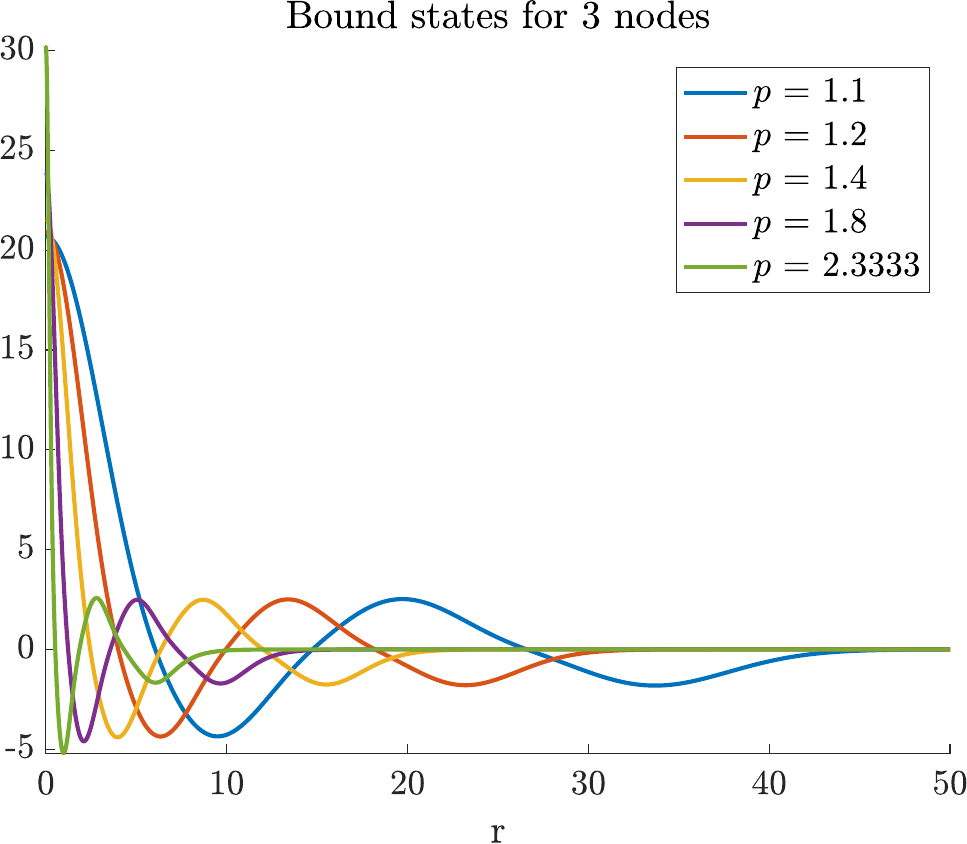}
         \caption{$N_{\textrm{nodes}} = 3$}
     \end{subfigure}
     \hspace{2em}
     \begin{subfigure}[b]{0.45\textwidth}
         \centering
         \includegraphics[width=\textwidth]{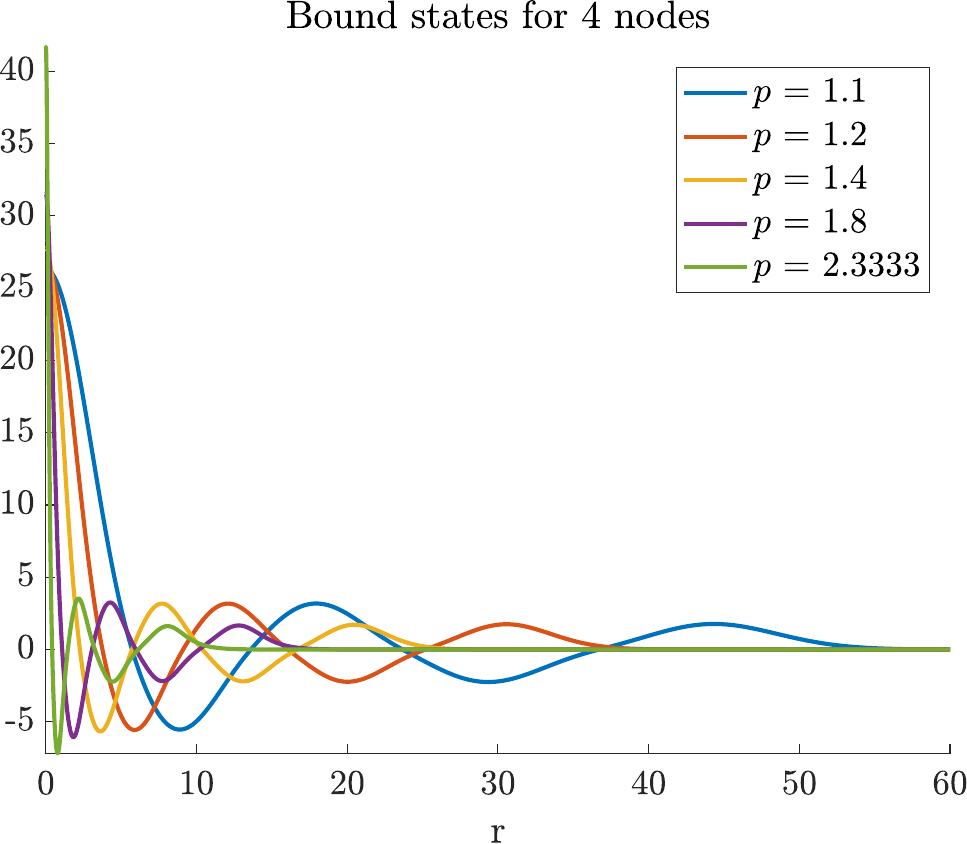}
         \caption{$N_{\textrm{nodes}} = 4$}
     \end{subfigure}

        \caption{Bound states in dimension $d=3$}\label{ref:boundstatesd3}
\end{figure}

\section{The Nehari method}
\label{sec:nehari}

In this section, we assume that $f(u) = |u|^{p-1}u$ with $p\in (1,1+4/(d-2)_+)$ and we consider on $\R^d$ the problem 
\begin{equation*}\label{eq:snls-2}
     -\Delta u+u-|u|^{p-1}u=0.
\end{equation*}
% We recall that the action functional $S$ is given by
% \begin{equation*}
%     S(u) = \frac12\norm{\nabla u}_{L^2(\Omega)}^2 + \frac12\norm{u}_{L^2(\Omega)}^2 -\frac{1}{p+1}\norm{u}_{L^{p+1}(\Omega)}^{p+1},
% \end{equation*}
% and the Nehari functional
% \begin{equation*}
%     I(u) = \norm{\nabla u}_{L^2(\Omega)}^2 + \norm{u}_{L^2(\Omega)}^2 -\norm{u}_{L^{p+1}(\Omega)}^{p+1}.
% \end{equation*}
% The Nehari manifold on $\Omega$ is then given by
% \begin{equation*}
%     \mathcal{N} =\left\{u \in H^1_0(\Omega): I(u) = 0 \right\}.
% \end{equation*}
% %\begin{equation}
% %\label{eq:snls-2}
% % 
% %\end{equation}
We want to compute numerically the sequence from Theorem \ref{thm:least-energy-multinodal}. 

For practical implementation, we must restrict ourselves to a bounded interval. That is, instead of considering the problem on the whole line $\mathbb R$, we restrict ourselves to the interval $[0,R]$ for a given $R>0$ sufficiently large. 
We consider the space
\begin{equation*}
\mathcal{H}_{\textrm{rad},R}^1 : = 
 \left\{ u: [0,R]\mapsto \mathbb{R}: \int_0^{R}( |u'(r)|^2 + |u(r)|^2)r^{d-1} dr <+\infty,\;  u(R) = 0\right\},
\end{equation*}
on which we define the functionals $S$ and $I$ by the same formula, but restricted to the interval $[0,R]$.
Similarly, we define the nodal Nehari space
\begin{align*}
&\mathcal{N}_{N_{\textrm{nodes}},R} : = 
\\ &\hspace{0.5em}\left\{u \in \mathcal{H}_{\textrm{rad},R}^1: u^{-1}(0) = \{\rho_1,\rho_2,\ldots, \rho_{N_{\textrm{nodes}}} \},\; I(u_{|[\rho_k,\rho_{k+1}]}) = 0,\; 0\leq k \leq N_{\textrm{nodes}}  \right\},
\end{align*}
where $0 = \rho_0 < \rho_1<\ldots<\rho_{N_{\textrm{nodes}}+1} = R$ are depending on $u$ and we assume that $u$ changes sign at each node. Let
\begin{equation*}
\mathcal{H}_{N_{\textrm{nodes}},R} : = \left\{u \in \mathcal{H}_{\textrm{rad},R}^1\middle|\; u^{-1}(0) = \{\rho_1,\rho_2,\ldots, \rho_{N_{\textrm{nodes}}} \}\right\}.
\end{equation*}
Then, we define the projection $\Pi_{\mathcal{N}_{N_{\textrm{nodes}}},R}: \mathcal{H}_{N_{\textrm{nodes}},R} \mapsto \mathcal{N}_{N_{\textrm{nodes}},R}$ by
\begin{equation*}
\Pi_{\mathcal{N}_{N_{\textrm{nodes}}}} u : = \sum_{k = 0}^{N_{\textrm{nodes}}}u_{|[\rho_k,\rho_{k+1}]} \left( \frac{\|\nabla u_{|[\rho_k,\rho_{k+1}]}\|^2_{L^2} + \|u_{|[\rho_k,\rho_{k+1}]}\|^2_{L^2}}{\|u_{|[\rho_k,\rho_{k+1}]}\|_{L^{p+1}}^{p+1}} \right)^{1/(p-1)}.
\end{equation*}

%\subsection{The projected gradient descent discretized by a finite difference method}
To construct a minimizing sequence for the problem
\begin{equation}\label{eq:discMinR}
\min_{u\in \mathcal{N}_{N_{\textrm{nodes}},R}}S(u),
\end{equation}
a natural method is to use the so-called projected gradient descent given by
\begin{equation*}
\left\{\begin{array}{ll}
u^{(0)} \in  \mathcal{N}_{N_{\textrm{nodes}},R},
\\ u^{(n+1)} = \Pi_{\mathcal{N}_{N_{\textrm{nodes}},R}} \left(u^{(n)} + \tau S'(u^{(n)})\right),\quad \forall n\geq 0,
\end{array}\right.
\end{equation*}
where $\tau \in\mathbb{R}^+$ is the time-step, which also writes as
\begin{equation*}
\left\{\begin{array}{ll}
u^{(0)} \in  \mathcal{N}_{N_{\textrm{nodes}},R},
\\ u^{(n+1)} = \Pi_{\mathcal{N}_{N_{\textrm{nodes}},R}} \left(u^{(n)} - \tau(\Delta_{\textrm{rad},R}u^{(n)} -u^{(n)}  + |u^{(n)}|^{p-1}u^{(n)})\right),\quad \forall n\geq 0.
\end{array}\right.
\end{equation*}
\begin{remark}
This algorithm is designed in the following way: first a gradient step is performed on the whole function, then each nodal component of the function is projected on the corresponding Nehari manifold. In particular, the motion of the nodes is determined during the gradient step. Furthermore, in practice, we observe that the total number of nodes is conserved. In the context of parabolic flows, this is an expected property which, for instance, has been proved in \cite{Ia13}.
\end{remark}
We now proceed with the spatial discretization. By setting $\pi_{N}([0,R]) : = \{r_k := (k-1)h,\;1\leq k\leq N+1\}$ with $h = R/N$, we can consider a discretization of $\Delta_{\textrm{rad},R}$ by finite differences acting on $\mathbb{R}^{N}$. That is, for any $u\in \mathcal{H}_{\textrm{rad},R}^1$ we use the second order approximations, for any $2\leq k\leq N-1$,
\begin{equation}
u''(r_k) \approx \frac{u(r_{k+1}) - 2 u(r_{k}) + u(r_{k-1})}{h^2}\quad\mbox{and}\quad u'(r_k) \approx \frac{u(r_{k+1}) - u(r_{k-1})}{2h},\label{eq:finitediff}
\end{equation}
to deduce the following approximation
\begin{equation*}
\Delta_{\textrm{rad},R} u(r_k) \approx \frac{u(r_{k+1}) - 2 u(r_{k}) + u(r_{k-1})}{h^2} + \frac{d-1}{r_k}\frac{u(r_{k+1}) - u(r_{k-1})}{2h}.
\end{equation*}
Furthermore, the boundary conditions yield
\begin{align*}
\Delta_{\textrm{rad},R} u (r_1) &\approx \frac{2(u(r_2)-u(r_1))}{h^2}
\\ \mbox{and}\quad \Delta_{\textrm{rad},R} u (r_N) &\approx\frac{ - 2 u(r_{N}) + u(r_{N-1})}{h^2} - \frac{d-1}{r_N}\frac{u(r_{N-1})}{2h}.
\end{align*}
In the end, we obtain the matrix
\begin{equation*}
[\boldsymbol\Delta_{\textrm{rad},R}]_{i,j} : = \left\{
\begin{array}{ll} 
 2/h^2,\quad\mbox{for }(i,j) = (1,2),
\\ -2/h^2,\quad\mbox{for } 1\leq i\leq  N\mbox{ and }j = i,
\\ 1/h^2 - (d-1)/2hr_{i},\quad\mbox{for } 2\leq i\leq  N\mbox{ and }j = i - 1,
\\ 1/h^2 + (d-1)/2hr_{i},\quad\mbox{for } 2\leq i\leq  N-1\mbox{ and }j = i + 1,
\\ 0,\quad\mbox{else}.
\end{array}\right.
\end{equation*}
By denoting $\boldsymbol{u} = (u(r_j))_{1\leq j\leq N}\in\mathbb{R}^N$ as the discretization of $u$ on $\pi_N([0,R])$, we deduce that
\begin{equation*}
\Delta_{\textrm{rad},R} u(r_i) \approx ([\boldsymbol\Delta_{\textrm{rad},R}]\boldsymbol{u})_i,\quad\forall i\in\{1,\ldots,N\}.
\end{equation*}
We also need to discretize the positions of the nodes $\{\rho_1,\ldots,\rho_{N_\textrm{nodes}}\}$ and the functionals involved. For any $u\in \mathcal{H}_{\textrm{rad},R}^1$, we assume that each node of $u$ is located in an interval $(r_j,r_{j+1})$, for a certain $1\leq j\leq N$, where $u(r_{j+1})u(r_j)<0$ (note that here the term \emph{node} refers to a point where the function changes sign). By a linear approximation of $u$ on each interval $[r_j,r_{j+1}]$, with $1\leq j\leq N$, an approximation of a node $\rho$ belonging in $[r_j,r_{j+1}]$ will be given by
\begin{equation}\label{eq:approxnod}
\rho \approx \varrho = \frac{r_j u(r_{j+1}) - r_{j+1} u(r_{j})}{u(r_{j+1})-u(r_{j})}.
\end{equation}
We now turn to the discretization of the functionals and choose to rely on a trapezoidal rule. This yields, for any $v\in\mathcal{C}([0,R])$,
\begin{align*}
\int_{a}^{b} v(r) r^{d-1}dr &\approx \frac{v(b)b^{d-1} + v(a)a^{d-1}}2 (b-a).
\end{align*}
We denote $(\rho_k)_{0\leq k\leq N_{\textrm{nodes}}+1}$ the nodes of $u$ with $\rho_0 = 0$ and $\rho_{N_{\textrm{nodes}}+1} = R$. For any $u\in\mathcal{H}_{\textrm{rad},R}^1$ and $0\leq k\leq N_{\textrm{nodes}}$, we deduce the approximation (using the trapezoidal rule)
\begin{align}
\|u_{|[\rho_k,\rho_{k+1}]}\|_{L^p}^p &= \int_{\rho_k}^{\rho_{k+1}} |u(r)|^p r^{d-1}dr \nonumber
\\ &\approx  h \sum_{j = m(\varrho_k)}^{\ell(\varrho_{k+1})} |u(r_j)|^pr_j^{d-1} + \frac{(r_{m(\varrho_k)} - \varrho_k )- h}2 |u(r_{m(\varrho_k)})|^pr_{m(\varrho_k)}^{d-1}\nonumber
\\ &\hspace{2em} + \frac{(\varrho_{k+1} - r_{\ell(\varrho_{k+1})})- h}2 |u(r_{\ell(\varrho_{k+1})})|^pr_{\ell(\varrho_{k+1})}^{d-1} = : \mathfrak{L}^p_k(\boldsymbol{u}),\label{eq:approxLp}
\end{align}
where $m(\varrho) = \min\{j\in\{1,\ldots,N\}:\; r_j \geq \varrho\}$, $\ell(\varrho) =  \max\{j\in\{1,\ldots,N\}:\; r_j\leq \varrho\}$ and $\varrho_k$ is the approximation of $\rho_k$ obtained by \eqref{eq:approxnod}. Furthermore, we have, by using \eqref{eq:finitediff}, for any $u\in\mathcal{H}_{\textrm{rad},R}^1$ and $1\leq k\leq N_{\textrm{nodes}}-1$,
\begin{align}
&\|\nabla u_{|[\rho_k,\rho_{k+1}]}\|_{L^2}^2 \approx h \sum_{j = m(\varrho_k)}^{\ell(\varrho_{k+1})} \left|\frac{u(r_{j+1}) - u(r_{j-1})}{2h}\right|^2r_j^{d-1}\nonumber
\\ &\hspace{1em} + \frac{(r_{m(\varrho_k)} - \varrho_k )- h}2 \left|\frac{u(r_{m(\varrho_k)+1})-u(r_{m(\varrho_k)-1})}{2h}\right|^2r_{m(\varrho_k)}^{d-1}\nonumber
\\ &\hspace{1em} + \frac{r_{m(\varrho_k)} - \varrho_k }{2} \left|\frac{u(r_{m(\varrho_{k})})-u(r_{m(\varrho_{k})-1})}{h}\right|^2\varrho_k^{d-1}\nonumber
\\ &\hspace{1em} + \frac{(\varrho_{k+1} - r_{\ell(\varrho_{k+1})})- h}2 \left|\frac{u(r_{\ell(\varrho_{k+1})+1})-u(r_{\ell(\varrho_{k+1})-1})}{2h}\right|^2r_{\ell(\varrho_{k+1})}^{d-1}\nonumber
\\ &\hspace{1em} + \frac{\varrho_{k+1} - r_{\ell(\varrho_{k+1})}}{2} \left|\frac{u(r_{\ell(\varrho_{k+1})+1})-u(r_{\ell(\varrho_{k+1})})}{h}\right|^2\varrho_{k+1}^{d-1}\nonumber
\\ &\hspace{6em} = : \mathfrak{N}_k(\boldsymbol{u}),\label{eq:approxG}
\end{align}
where we used the following finite differences approximation
\begin{equation*}
u'(\rho) \approx \frac{u(r_j)- u(r_{j-1})}{r_j - r_{j-1}},
\end{equation*}
with $j\in\{1,\ldots,N\}$ such that $\rho$ is a nod belonging in $(r_{j-1},r_j)$. We notice that, in the case $k = N_{\textrm{nodes}}$, the previous expression is replaced with
\begin{align}
&\|\nabla u_{|[\rho_{N_{\textrm{nodes}}},R]}\|_{L^2}^2 \approx h \sum_{j = m(\varrho_{N_{\textrm{nodes}}})}^{N-1} \left|\frac{u(r_{j+1}) - u(r_{j-1})}{2h}\right|^2r_j^{d-1}\nonumber
\\ &\hspace{1em} + \frac{(r_{m(\varrho_{N_{\textrm{nodes}}})} - \varrho_{N_{\textrm{nodes}}} )- h}2 \left|\frac{u(r_{m(\varrho_{N_{\textrm{nodes}}})+1})-u(r_{m(\varrho_{N_{\textrm{nodes}}})-1})}{2h}\right|^2r_{m(\varrho_{N_{\textrm{nodes}}})}^{d-1}\nonumber
\\ &\hspace{1em} + \frac{r_{m(\varrho_{N_{\textrm{nodes}}})} - \varrho_{N_{\textrm{nodes}}} }{2} \left|\frac{u(r_{m(\varrho_{N_{\textrm{nodes}}})})-u(r_{m(\varrho_{N_{\textrm{nodes}}})-1})}{h}\right|^2\varrho_{N_{\textrm{nodes}}}^{d-1}\nonumber
\\ &\hspace{1em} + \frac h2\left|\frac{u(r_{N-1})}{2h}\right|^2r^{d-1}_N\nonumber
\\ &\hspace{6em}  = : \mathfrak{N}_{N_{\textrm{nodes}}}(\boldsymbol{u}).\label{eq:approxGbis1}
\end{align}
For the case $k = 0$, we use instead
\begin{align}
&\|\nabla u_{|[0,\rho_{1}]}\|_{L^2}^2 \approx h \sum_{j = 2}^{\ell(\varrho_{1})} \left|\frac{u(r_{j+1}) - u(r_{j-1})}{2h}\right|^2r_j^{d-1}\nonumber
\\ &\hspace{1em} + \frac{(\varrho_{1} - r_{\ell(\varrho_{1})})- h}2 \left|\frac{u(r_{\ell(\varrho_{1})+1})-u(r_{\ell(\varrho_{1})-1})}{2h}\right|^2r_{\ell(\varrho_{1})}^{d-1}\nonumber
\\ &\hspace{1em} + \frac{\varrho_{1} - r_{\ell(\varrho_{1})}}{2} \left|\frac{u(r_{\ell(\varrho_{1})+1})-u(r_{\ell(\varrho_{1})})}{h}\right|^2\varrho_{1}^{d-1}\nonumber
\\ &\hspace{6em}  = : \mathfrak{N}_0(\boldsymbol{u}).\label{eq:approxGbis2}
\end{align}

Thanks to \eqref{eq:approxLp} and \eqref{eq:approxG}-\eqref{eq:approxGbis1}-\eqref{eq:approxGbis2}, we can deduce an approximation of the functionals. For the sake of simplicity, we do not take into account the exceptional case where a zero of a discretized function falls precisely on a point of the grid. We define the set of vectors $\mathbb{R}^N$ with $N_{\textrm{nodes}}$ nodes as
\begin{equation*}
    \mathbb{R}_{N_{\textrm{nodes}}}^{N} = \left\{\boldsymbol{u}\in(\mathbb{R}\setminus\{0\})^N:\#\left\{j=1,\ldots,N-1: {\boldsymbol{u}}_{j}{\boldsymbol{u}}_{j+1}< 0\right\} = N_{\textrm{nodes}} \right\}.
\end{equation*}
%where $J$ is the set of indices $j$ such that $\boldsymbol{u}$ is non-zero, i.e.
%\[
%J = \{ j=1,\dots,N : \boldsymbol{u}_j\neq 0\}.
%\]
%with the understanding that $\tilde{\boldsymbol{u}}$ is the vector $\boldsymbol{u}$ where we omit the components that are equal to $0$. 
Furthermore, for any $\boldsymbol{u}\in \mathbb{R}_{N_{\textrm{nodes}}}^{N}$, we define
\begin{align*}
    \boldsymbol{m}(\boldsymbol{u}) &= \left\{j=1,...,N-1: \boldsymbol{u}_{j}\boldsymbol{u}_{j+1}< 0\right\}\in\mathbb{N}^{N_{\textrm{nodes}}},\\
        \boldsymbol{\ell}(\boldsymbol{u}) &= \left\{j=2,...,N: \boldsymbol{u}_{j-1}\boldsymbol{u}_{j}< 0\right\}\in\mathbb{N}^{N_{\textrm{nodes}}}.
        %\quad
    %\boldsymbol{\ell}(\boldsymbol{u}) = \left\{j: \tilde{\boldsymbol{u}}_{j-1}\tilde{\boldsymbol{u}}_{j}< 0\right\}\in\mathbb{N}^{N_{\textrm{nodes}}}.
\end{align*}
We deduce that the restricted Nehari functionals writes, for any $\boldsymbol{u}\in\mathbb{R}_{N_{\textrm{nodes}}}^{N}$ and any $0\leq k\leq N_{\textrm{nodes}}$, as
\begin{equation*} 
    \mathfrak{I}_k(\boldsymbol{u}) = \mathfrak{N}_k(\boldsymbol{u}) + \mathfrak{L}^2_k(\boldsymbol{u}) - \mathfrak{L}^{p+1}_k(\boldsymbol{u}),
\end{equation*}
as well as the total action
\begin{equation*}
    \mathfrak{S}(\boldsymbol{u}) = \sum_{k = 0}^{N_{\textrm{nodes}}} \frac12\mathfrak{N}_k(\boldsymbol{u}) + \frac12\mathfrak{L}^2_k(\boldsymbol{u}) - \frac1{p+1}\mathfrak{L}^{p+1}_k(\boldsymbol{u}).
\end{equation*}
By setting the discrete nodal Nehari manifold as
\begin{equation*}
    \mathcal{N}_{N_{\textrm{nodes}},R}^N = \left\{\boldsymbol{u}\in \mathbb{R}_{N_{\textrm{nodes}}}^{N}: \mathfrak{I}_k(\boldsymbol{u}) = 0,\; k = 0,\ldots,N_{\textrm{nodes}} \right\},
\end{equation*}
the discretization of the minimization problem \eqref{eq:discMinR} writes as
\begin{equation*}
    \min_{\boldsymbol{u}\in\mathcal{N}_{N_{\textrm{nodes}},R}^N }\mathfrak{S}(\boldsymbol{u}).
\end{equation*}
The solution of the above problem is obtained by a projected gradient method. By denoting $\mathfrak{E}_k(\boldsymbol{u}) = \frac12 \mathfrak{N}_k(\boldsymbol{u}) + \frac12\mathfrak{L}^{2}_k(\boldsymbol{u})$, we obtain the projection $\mathfrak{P}_{N_{\textrm{nodes}}}$ on the space $\mathcal{N}^N_{N_{\textrm{nodes}},R}$ of any vector $\boldsymbol{u}\in\mathbb{R}_{N_{\textrm{nodes}}}^{N}$:
\begin{equation*}
\mathfrak{P}_{N_{\textrm{nodes}}}\boldsymbol{u} = \sum_{k = 0}^{N_{\textrm{nodes}}} \boldsymbol{u}_{|\{\boldsymbol{m}(\boldsymbol{u})_k, \boldsymbol{\ell}(\boldsymbol{u})_k\}} \left(\frac{\mathfrak{E}_k(\boldsymbol{u})}{\mathfrak{L}^{p+1}_k(\boldsymbol{u})} \right)^{\frac{1}{p-1}},
\end{equation*}
where, for $1\leq j\leq N$,
\begin{equation*}
\left(\boldsymbol{u}_{|\{\boldsymbol{m}(\boldsymbol{u})_k, \boldsymbol{\ell}(\boldsymbol{u})_k\}}\right)_{j}  = \left\{\begin{array}{ll}
\boldsymbol{u}_j,\quad\mbox{if}\;\;\boldsymbol{m}(\boldsymbol{u})_k\leq j\leq \boldsymbol{\ell}(\boldsymbol{u})_k,
\\ 0,\quad\mbox{ else}.
\end{array}\right.
\end{equation*}
We can now gives a completely discretized version of the projected gradient descent method which is described in Algorithm \ref{alg:PGM} (where $[\boldsymbol{v}]_{i,j} = \boldsymbol{v}_j$ if $i = j$ and $0$ if $i\neq j$).

\begin{algorithm}[!h]
\caption{The projected gradient descent method \label{alg:PGM}.}
\begin{algorithmic} 
  \REQUIRE{$R,>0,\boldsymbol{u}^{(0)}\in\mathbb{R}^N,\tau>0, \varepsilon>0$}
\STATE{$\textrm{Crit} \gets 2\varepsilon  $}
\STATE{$j\gets 0$}
\WHILE{$\textrm{Crit} > \eps$}
\STATE{$\boldsymbol{v} \gets (\textrm{Id} - \tau ([\boldsymbol\Delta_{\textrm{rad},R}] + [|\boldsymbol{u}^{(j)}|^{p-1}]))\boldsymbol{u}^{(j)}$}
\STATE{$\boldsymbol{u}^{(j+1)}\gets \mathfrak{P}_{N_{\textrm{nodes}}}\boldsymbol{v}$}
\STATE{$\textrm{Crit}\gets \max_{1\leq \ell\leq N}|\boldsymbol{u}^{(j+1)}_{\ell} - \boldsymbol{u}^{(j)}_{\ell}|$}
\STATE{$j\gets j+1$}
\ENDWHILE
 \end{algorithmic}
\end{algorithm}

Note that we chose stagnation of the absolute error between two iterations as a stopping criterion. This is giving good results in the experiments, but other criteria such as the evolution of the action could have been considered.

\section{Some properties of the Nehari and shooting methods}
\label{sec:comparison}

In this section, we discuss some properties of the methods that we have introduced. Our goal is to point out some of their strengths and weaknesses.

In the case of the shooting method, observe that there is an inherent numerical difficulty associated with its practical implementation. Indeed, given $k\in\mathbb N$, the value of $\alpha_k$ can be determined only up to machine precision, i.e. $10^{-16}$ in practice. This is limiting the  size of the domain in $x$ on which  $u(\cdot;\alpha_k)$ can be computed accurately, even assuming no error on the numerical resolution of the Cauchy problem \eqref{eq:shoot_to_thrill}. Indeed, let $\eps>0$ and define $w_\eps=u(\cdot;\alpha_k)-u(\cdot;\alpha_k+\eps)$. Then $w_\eps$ verifies
\[
-w_\eps''+w_\eps-f'(u(\cdot;\alpha_k))w_\eps=O(w_\eps^2).
  \]
  As $\lim_{r\to\infty}f'(u(r;\alpha_k))=0$, the linear part of the equation is given by $-w_\eps''+w_\eps$. Whenever $w_\eps$ become small enough so that $O(w_\eps^2)$ becomes negligible, the dynamics of the equation of $w_\eps$ becomes driven by the linear part, for which $0$ is an exponentially unstable solution. As a consequence, we may have $w_\eps(x)\sim \eps e^{ x}$, which leads to $w_\eps\sim 1$ after $x\sim -\ln(\eps)$ (after which nonlinear effects cannot be neglected any more). For $\eps=  10^{-16}$, the best we can hope (assuming that the numerical method used to solve the ordinary differential equation is perfectly accurate) is therefore to solve our equation on an interval of length $-\ln(\eps)\sim 36$. This is illustrated in Figure \ref{fig:large-space-error}, on which we calculate the ground state with the shooting and Nehari methods in the case of the dimension $d = 2$ and for $p = 3$ when $R = 100$. We observe on the log-graph that at a distance from the origine around $19$, the calculated solution starts to increase and goes far away from the expected solution (which is exponentially decreasing toward $0$ at infinity). This issue is not observe in the case of the Nehari method due to the fact that we implement a Dirichlet boundary condition directly in the operator. This ensures that the numerical solution decreases properly to zero at the end of the domain.

\begin{figure}[htpb!]
 \begin{subfigure}[b]{0.45\textwidth}
         \centering
    \includegraphics[width=1\textwidth]{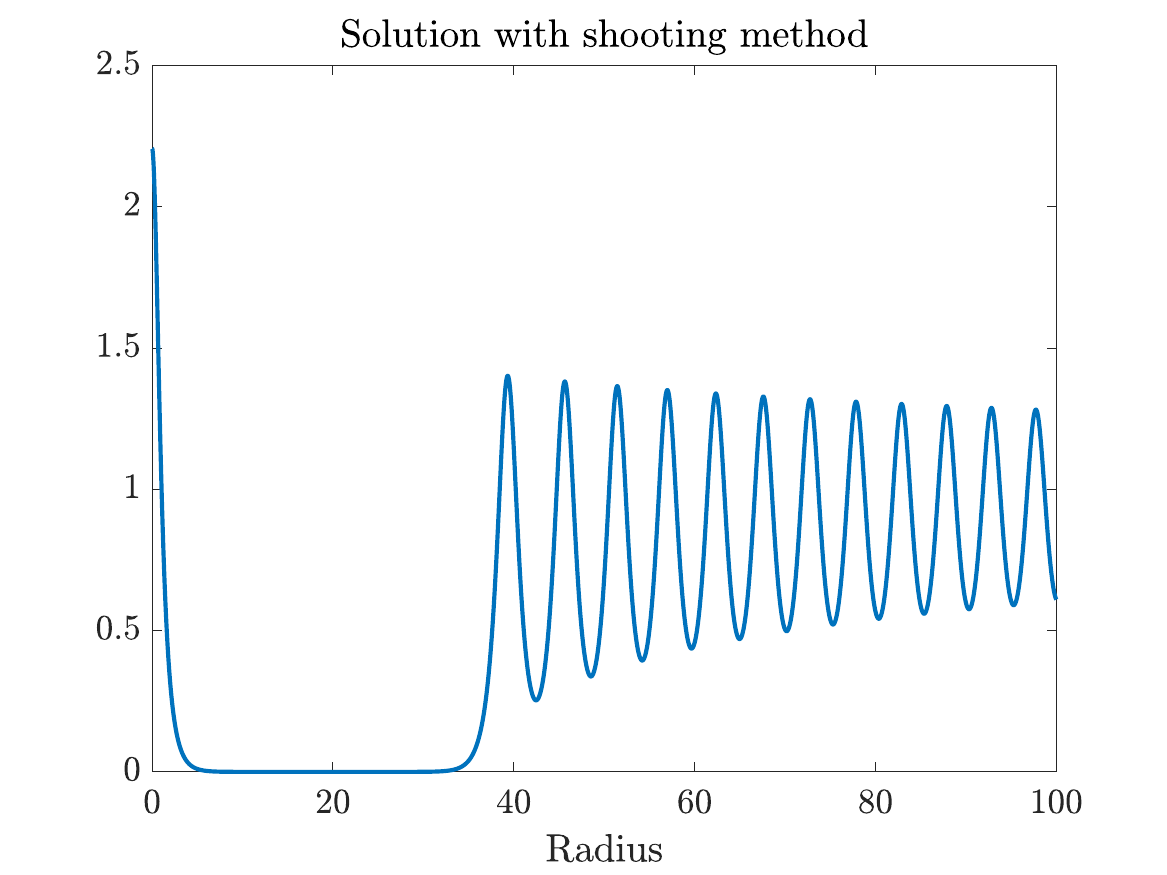}
    \caption{Solution (shooting method)}
 \end{subfigure}
 \hfill
 \begin{subfigure}[b]{0.45\textwidth}
         \centering
    \includegraphics[width=1\textwidth]{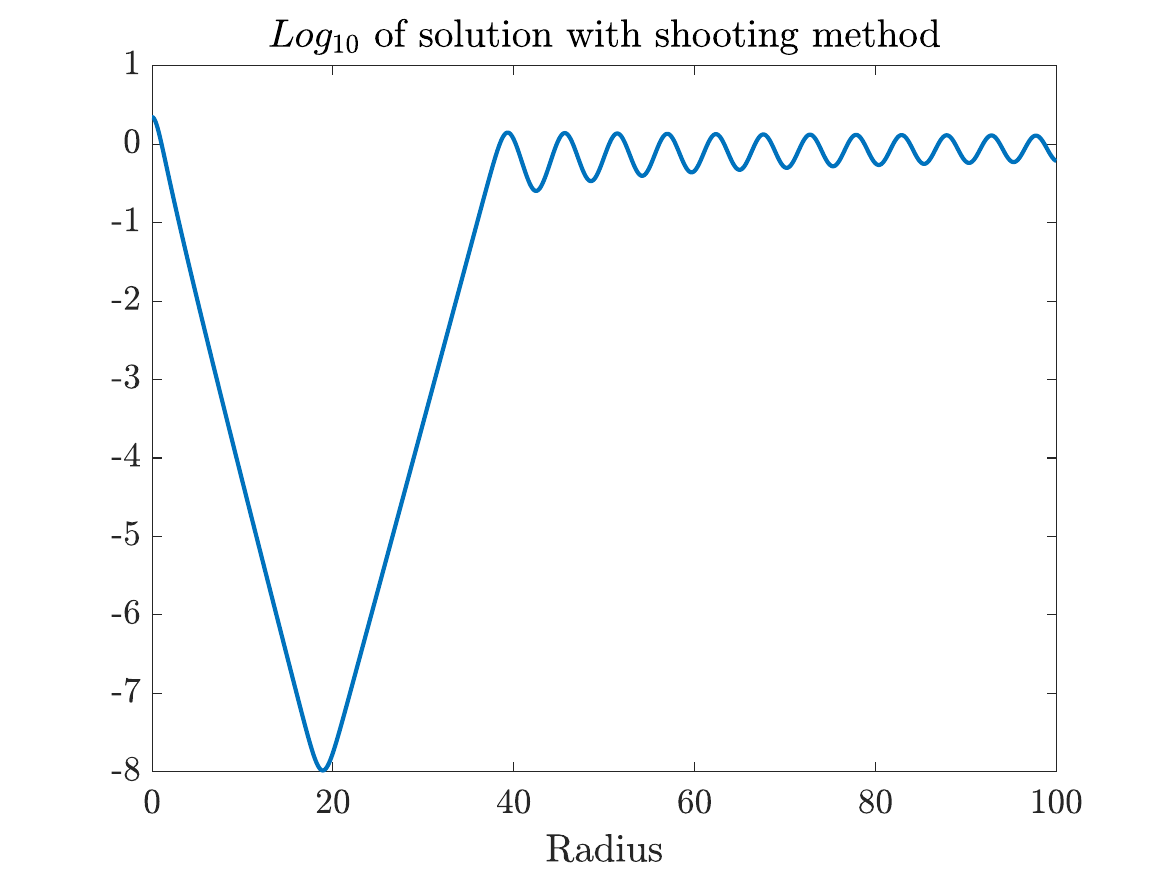}
    \caption{$Log_{10}$ of solution (shooting method)}
 \end{subfigure}

  \begin{subfigure}[b]{0.45\textwidth}
         \centering
    \includegraphics[width=1\textwidth]{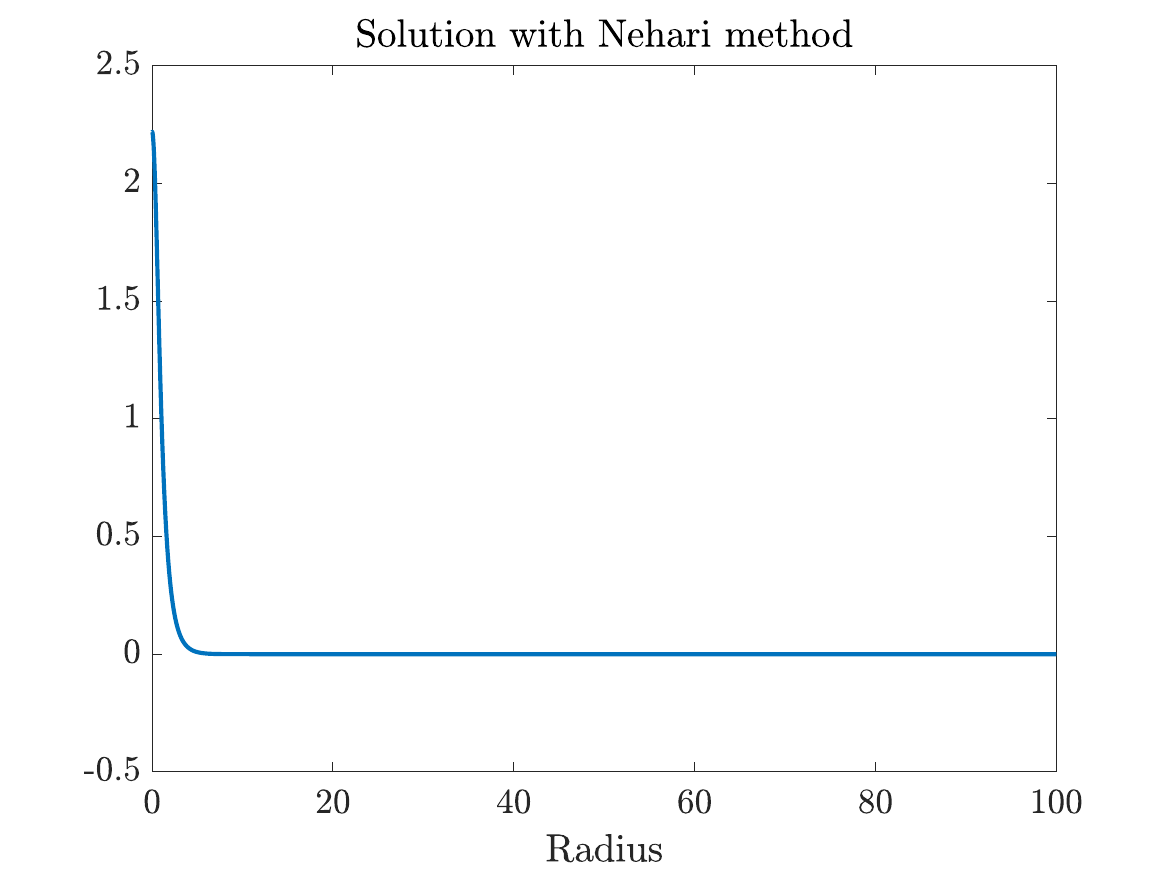}
    \caption{Solution (Nehari method)}
 \end{subfigure}
 \hfill
 \begin{subfigure}[b]{0.45\textwidth}
         \centering
    \includegraphics[width=1\textwidth]{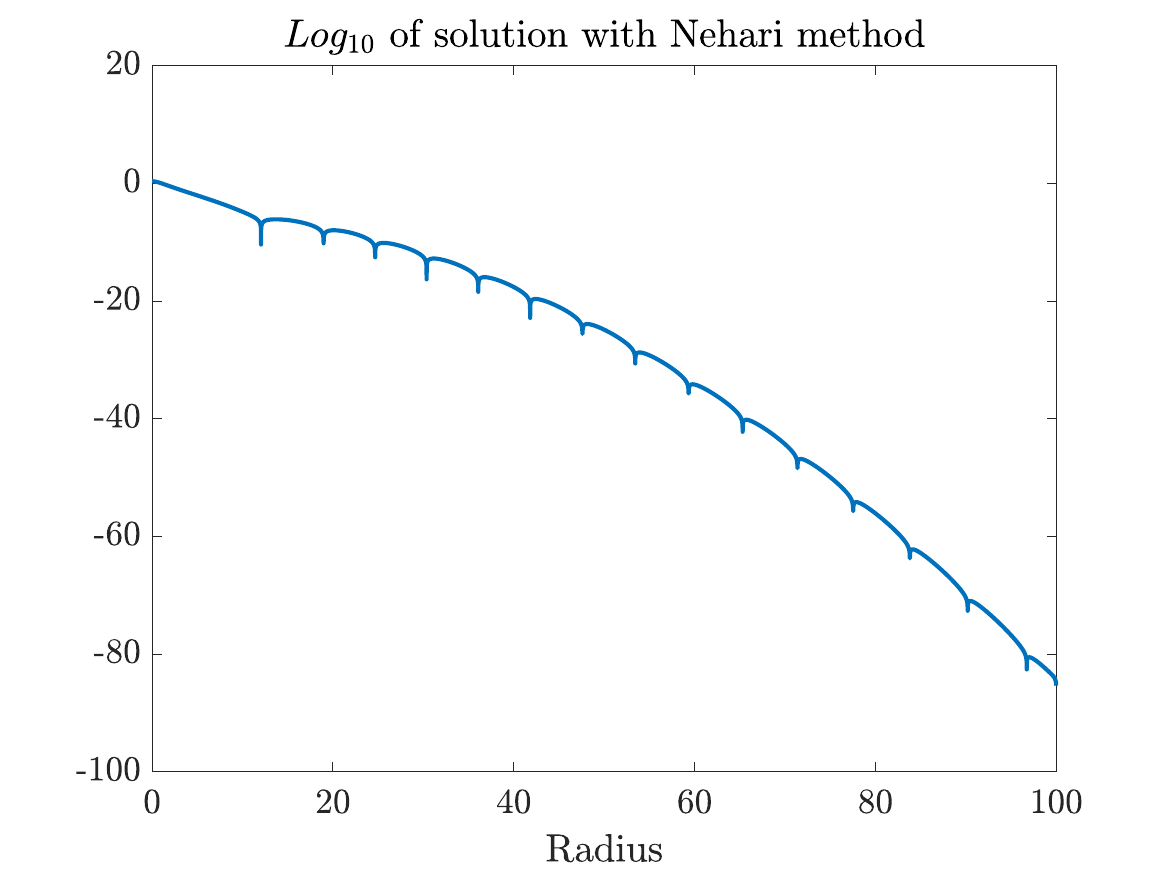}
    \caption{$Log_{10}$ of solution (Nehari method)}
 \end{subfigure}
    \caption{Computation of a ground state on large domain}
\label{fig:large-space-error}
\end{figure}

\begin{figure}[htpb!]
    \includegraphics[width=0.7\textwidth]{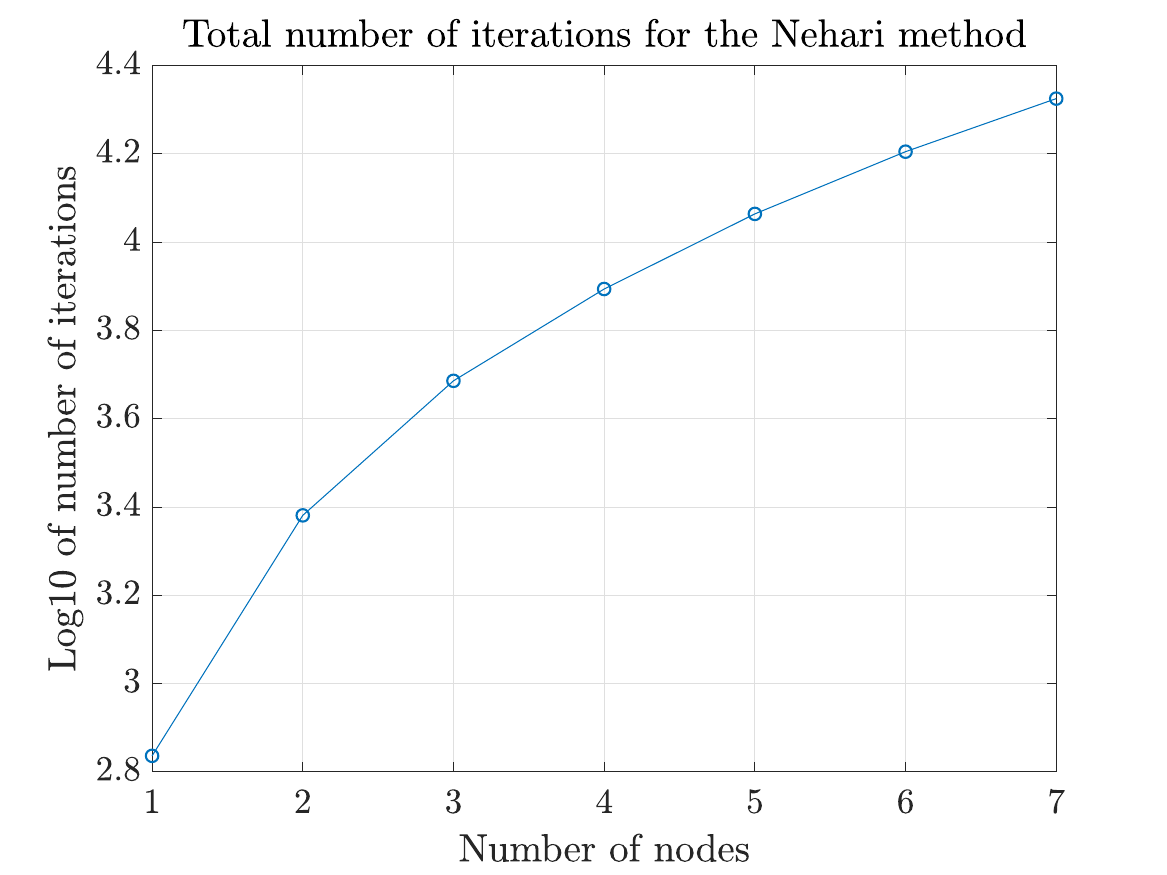}
    \caption{Total number of iterations for the Nehari method depending on the number of nodes}\label{fig:totalnumit}
\end{figure}

We now turn to the number of iterations required to compute a bound state. In the case of the shooting method, this number is naturally bounded by the maximal number of iterations used in the bisection. With a maximal precision set to be $\varepsilon = 10^{-16}$ and an initial interval of length $100$ for the initial data, the number of iterations will always be lower than $18\log_2(10) \approx 60$. The Nehari method does not benefit from such bound on its number of iterations. In Figure \ref{fig:totalnumit}, we depict the number of iterations necessary to the computation of bound states with respect to their number of nodes for $d = 2$, $p = 3$, $R = 30$ and $N = 2^{12}$. In each case, we use the following initial data
\begin{equation*}
    u_0(r) = \cos(r)e^{-r^2/30}.
\end{equation*}
We remark that this initial data has a large number of nodes and decreases rapidly to zero. By construction, the algorithm selects the desired number of nodes and the excess nodes are discarded. 
We can see that the number of iterations grows rapidly, making the Nehari method numerically costly compared to the shooting method.

%Radial excited states can easily be obtained (analytically and numerically)
%by a shooting method. Our goal is to obtain them
%and other non-radial excited states with other methods. 

%\begin{remark}
%  Important remark: on large domain, the decay obtained with the
%  Nehari approach is much better than the one obtained with the
%  shooting method (seen by plot of the log of the state and
%  observation of a straight line in the case of Nehari). 
%Remark on the shooting method: to get the same precision as with Nehari, one cannot use Matlab and must resort to a Fortran code in multi-precision, which is long and costly.
%\end{remark}

Finally, we investigate the convergence properties of these methods with respect to the number of discretization points. To do so, we compute bounds states in dimension $d=2$ and for $p=3$ on the interval $[0,R]$, for $R = 30$, for different numbers of nodes with each method. The number of discretization points is set to be $2^N$ with $N\in\{8,9,10,11,12\}$ and we compute the errors
\begin{align*}
&\quad e_N^{(1)} : = \|\boldsymbol{u}^{(N)} - \boldsymbol{u}^{(\textrm{ref})}\|_{L^1} = h \sum_{k = 1}^{2^N} |\boldsymbol{u}^{(N)}_k - \boldsymbol{u}^{(\textrm{ref})}_k| 
\\ &\mbox{and}\quad e_N^{(\infty)} : = \|\boldsymbol{u}^{(N)} - \boldsymbol{u}^{(\textrm{ref})}\|_{L^\infty} = \sup_{1\leq k\leq 2^N} |\boldsymbol{u}^{(N)}_k - \boldsymbol{u}^{(\textrm{ref})}_k|,
\end{align*}
for each $N$, where $\boldsymbol{u}^{(\textrm{ref})}$ is the bound state computed with $2^{15}$ discretization points. The results are depicted in Figure \ref{ref:convNehari} for the Nehari method and Figure \ref{ref:convShooting} for the shooting method. We can see that the order of convergence of the Nehari method depends on the number of nodes and does not seems to be a specific value. However, we can affirm that it is of order above $1$. In the case of the shooting method, the conclusion is more straightforward since the order of convergence is clearly $1$ regardless of the number of nodes. This is explained by the fact that the positions of the nodes are computed with an error of the order of the space discretization.

\begin{figure}
 \centering
     \begin{subfigure}[b]{0.3\textwidth}
         \centering
         \includegraphics[width=\textwidth]{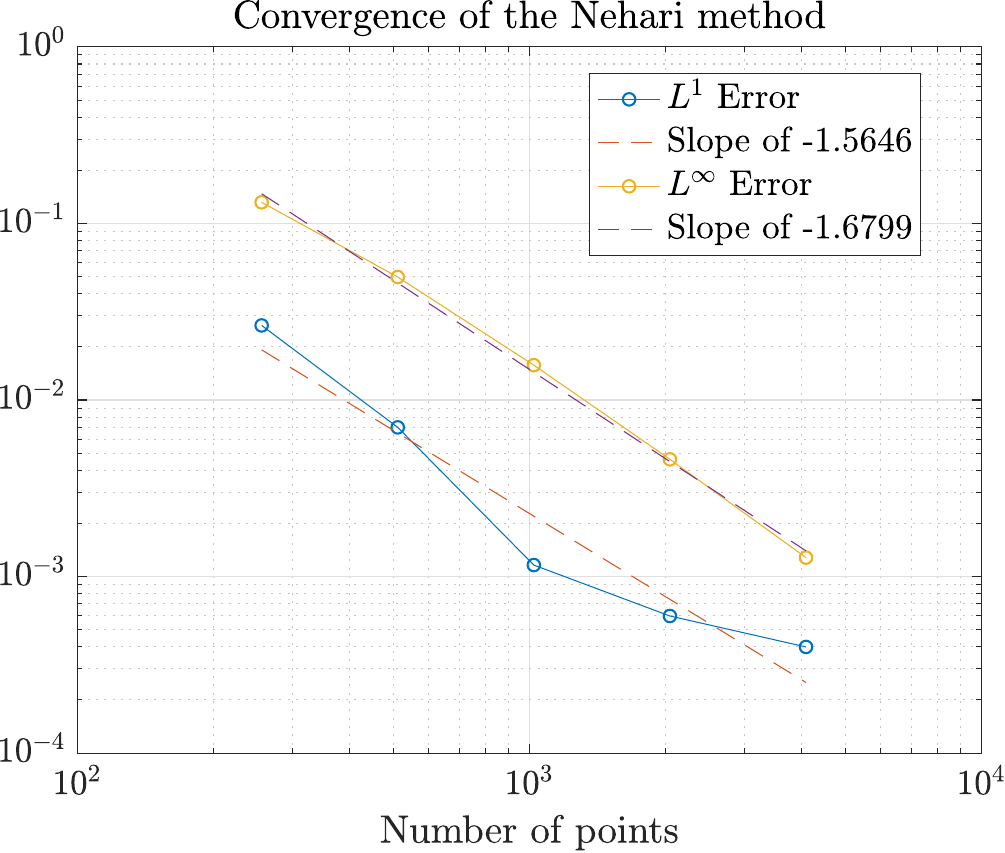}
         \caption{$N_{\textrm{nodes}} = 1$}
     \end{subfigure}
     \hfill
     \begin{subfigure}[b]{0.3\textwidth}
         \centering
         \includegraphics[width=\textwidth]{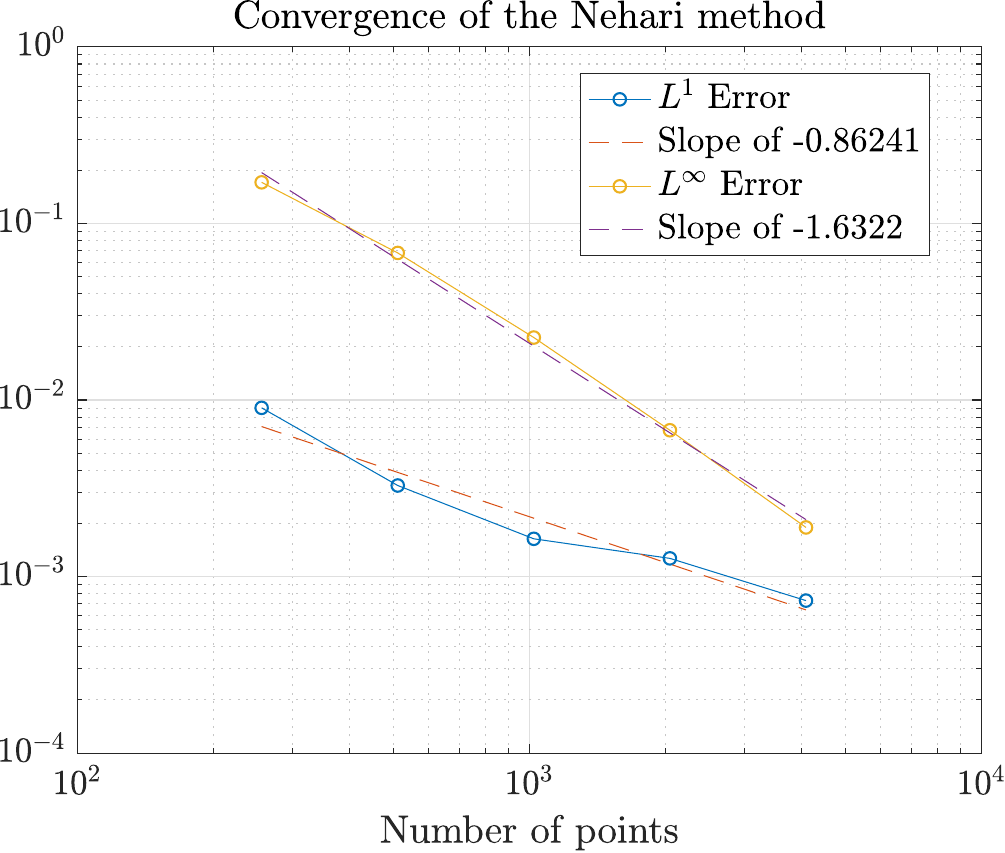}
         \caption{$N_{\textrm{nodes}} = 2$}
     \end{subfigure}
     \hfill
%     \begin{subfigure}[b]{0.3\textwidth}
%         \centering
%         \includegraphics[width=\textwidth]{Convergence_Nehari_3_nods}
%         \caption{$N_{\textrm{nodes}} = 3$}
%     \end{subfigure}
%     \hfill\begin{subfigure}[b]{0.3\textwidth}
%         \centering
%         \includegraphics[width=\textwidth]{Convergence_Nehari_4_nods}
%         \caption{$N_{\textrm{nodes}} = 4$}
%     \end{subfigure}
     \hspace{1em}\begin{subfigure}[b]{0.3\textwidth}
         \centering
         \includegraphics[width=\textwidth]{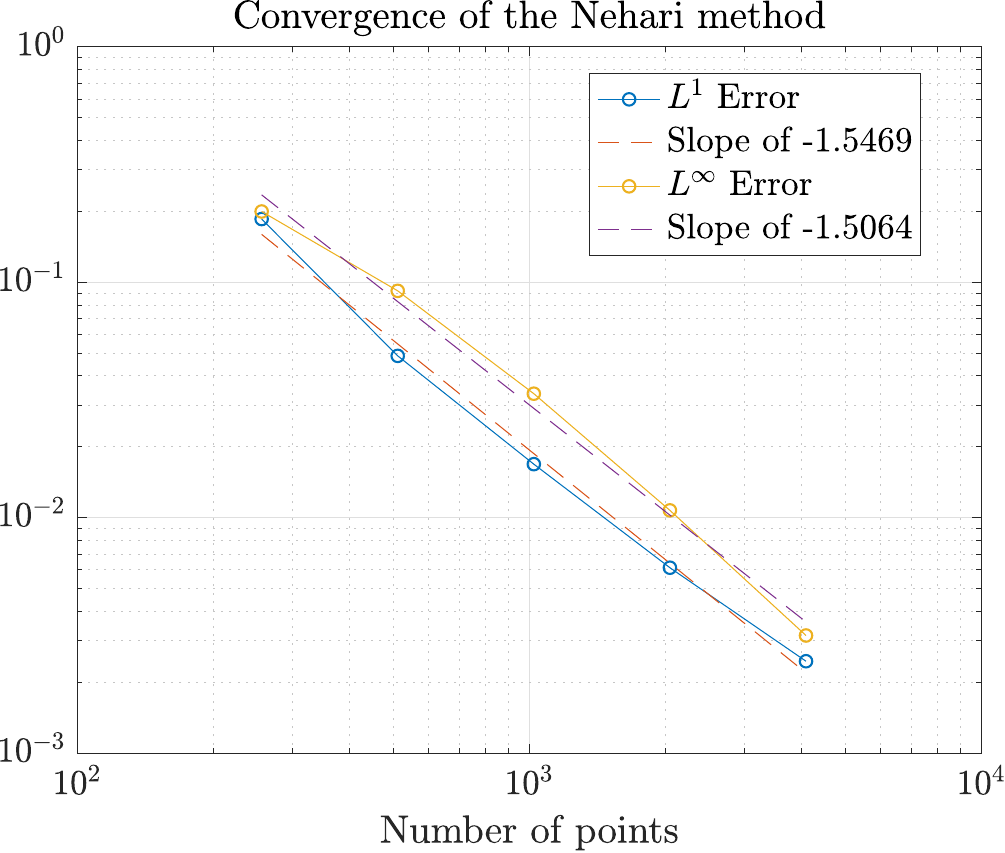}
         \caption{$N_{\textrm{nodes}} = 5$}
     \end{subfigure}

        \caption{Convergence for the Nehari method depending on the number of nodes}\label{ref:convNehari}
\end{figure}

\begin{figure}
 \centering
     \begin{subfigure}[b]{0.3\textwidth}
         \centering
         \includegraphics[width=\textwidth]{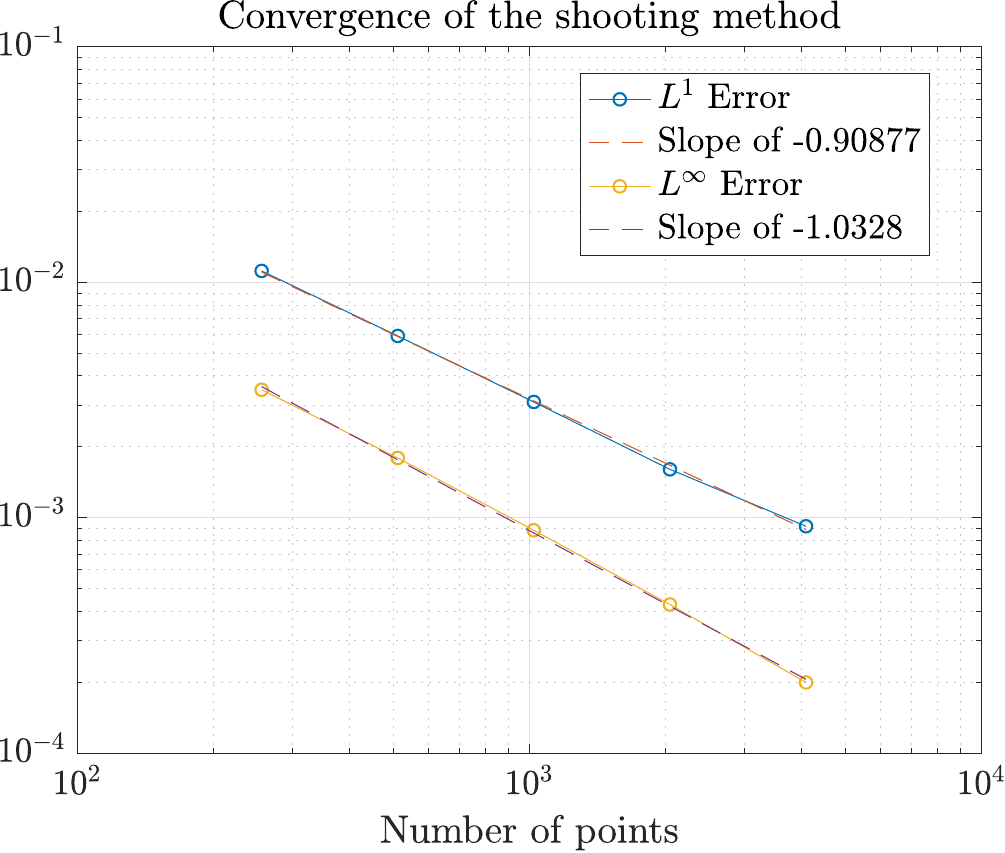}
         \caption{$N_{\textrm{nodes}} = 1$}
     \end{subfigure}
     \hfill
     \begin{subfigure}[b]{0.3\textwidth}
         \centering
         \includegraphics[width=\textwidth]{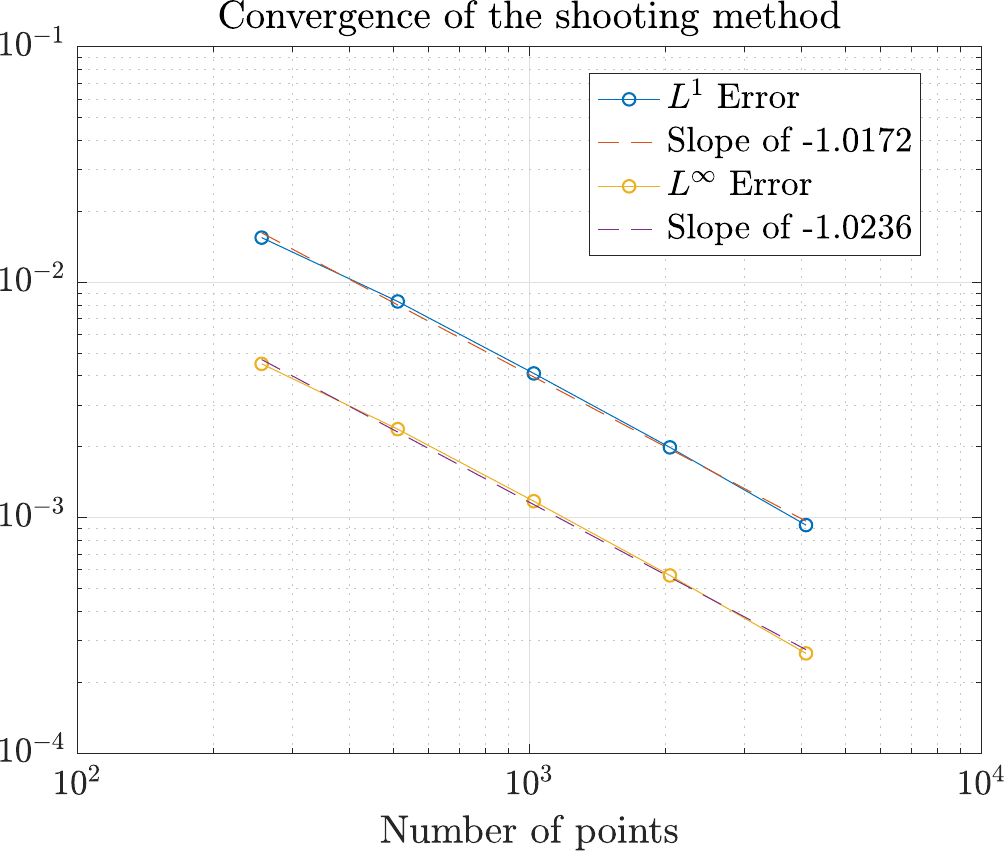}
         \caption{$N_{\textrm{nodes}} = 2$}
     \end{subfigure}
     \hfill
%     \begin{subfigure}[b]{0.3\textwidth}
%         \centering
%         \includegraphics[width=\textwidth]{Convergence_Shooting_3_nods}
%         \caption{$N_{\textrm{nodes}} = 3$}
%     \end{subfigure}
%     \hfill\begin{subfigure}[b]{0.3\textwidth}
%         \centering
%         \includegraphics[width=\textwidth]{Convergence_Shooting_4_nods}
%         \caption{$N_{\textrm{nodes}} = 4$}
%     \end{subfigure}
     \hspace{1em}\begin{subfigure}[b]{0.3\textwidth}
         \centering
         \includegraphics[width=\textwidth]{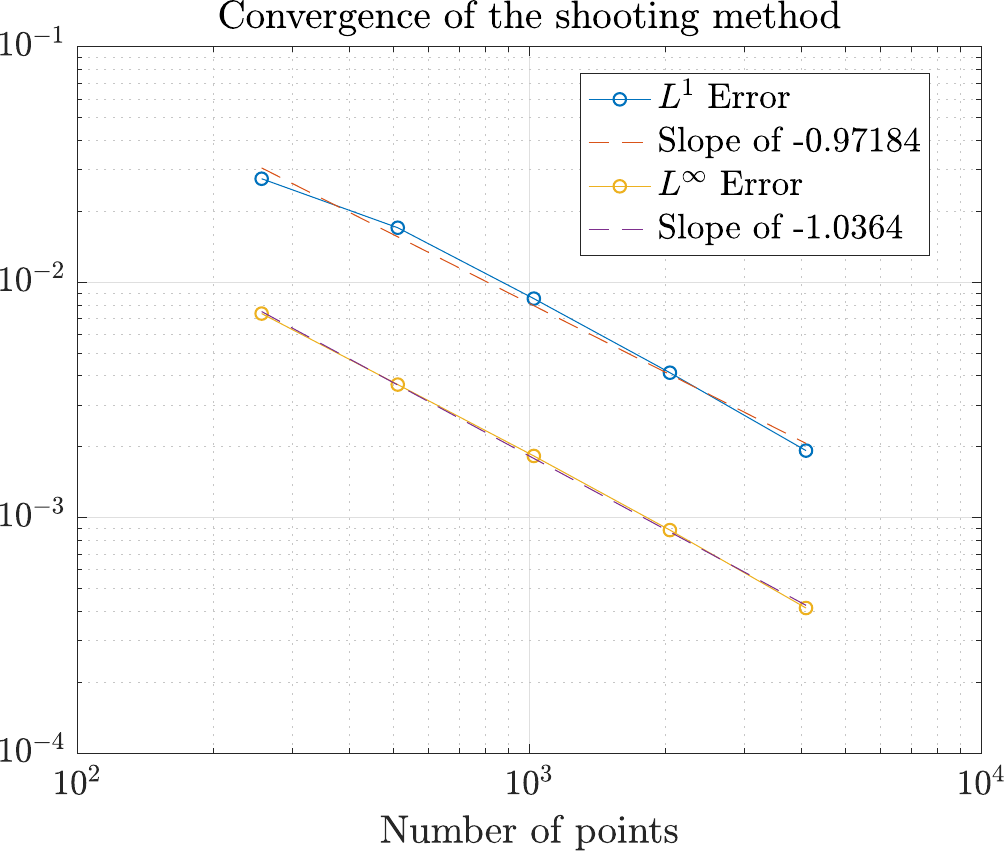}
         \caption{$N_{\textrm{nodes}} = 5$}
     \end{subfigure}

        \caption{Convergence for the shooting method depending on the number of nodes}\label{ref:convShooting}
\end{figure}

We then perform a comparison between the bound state obtained by each method in the same configuration. That is, we compute the errors
\begin{align*}
&\quad E_N^{(1)} : = \|\boldsymbol{u}^{(N,\textrm{Nehari})} - \boldsymbol{u}^{(N,\textrm{Shooting})}\|_{L^1} = h \sum_{k = 1}^{2^N} |\boldsymbol{u}^{(N,\textrm{Nehari})}_k - \boldsymbol{u}^{(N,\textrm{Shooting})}_k| 
\\ &\mbox{and}\quad E_N^{(\infty)} : = \|\boldsymbol{u}^{(N,\textrm{Nehari})} - \boldsymbol{u}^{(N,\textrm{Shooting})}\|_{L^\infty} = \sup_{1\leq k\leq 2^N} |\boldsymbol{u}^{(N,\textrm{Nehari})}_k - \boldsymbol{u}^{(N,\textrm{Shooting})}_k|,
\end{align*}
for each $N\in\{8,9,10,11,12\}$, where $\boldsymbol{u}^{(N,\textrm{Nehari})}$ (resp. $\boldsymbol{u}^{(N,\textrm{Shooting})}$) is the bound state obtained by the Nehari method (resp. the shooting method). The results can be observed in Figure \ref{ref:comparaison} where we can see that, no matter the number of nodes, both methods converge to the same bound state.

\begin{figure}
 \centering
     \begin{subfigure}[b]{0.3\textwidth}
         \centering
         \includegraphics[width=\textwidth]{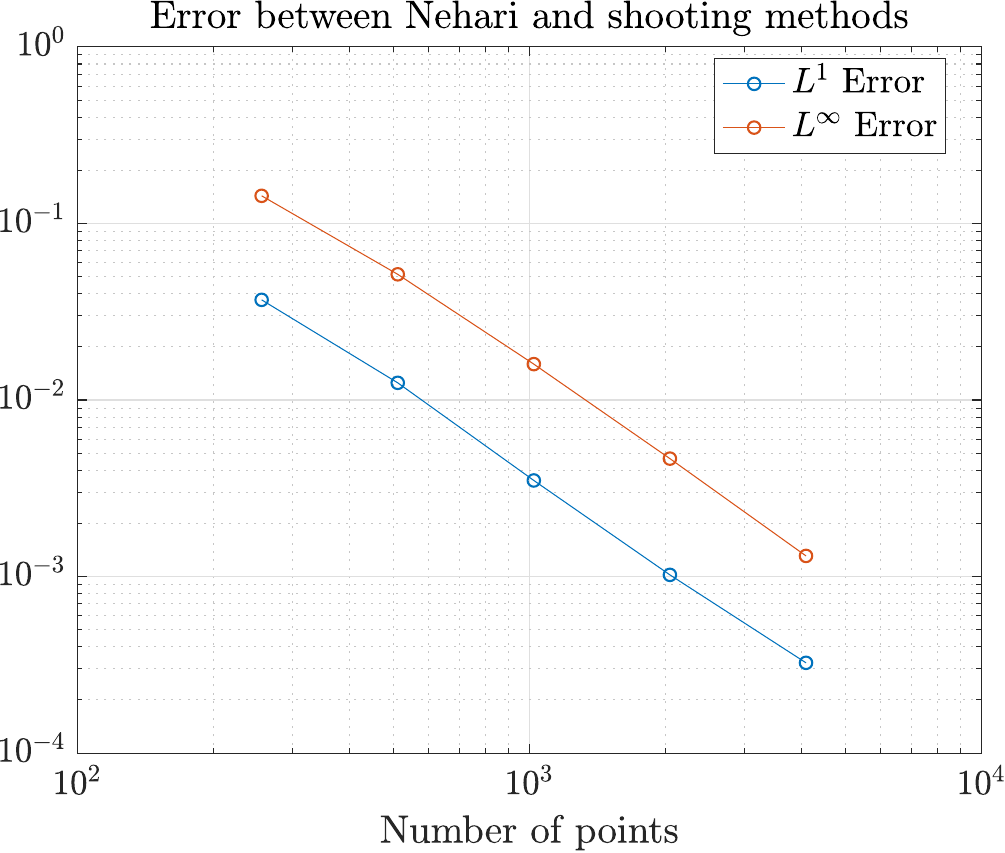}
         \caption{$N_{\textrm{nodes}} = 1$}
     \end{subfigure}
     \hfill
     \begin{subfigure}[b]{0.3\textwidth}
         \centering
         \includegraphics[width=\textwidth]{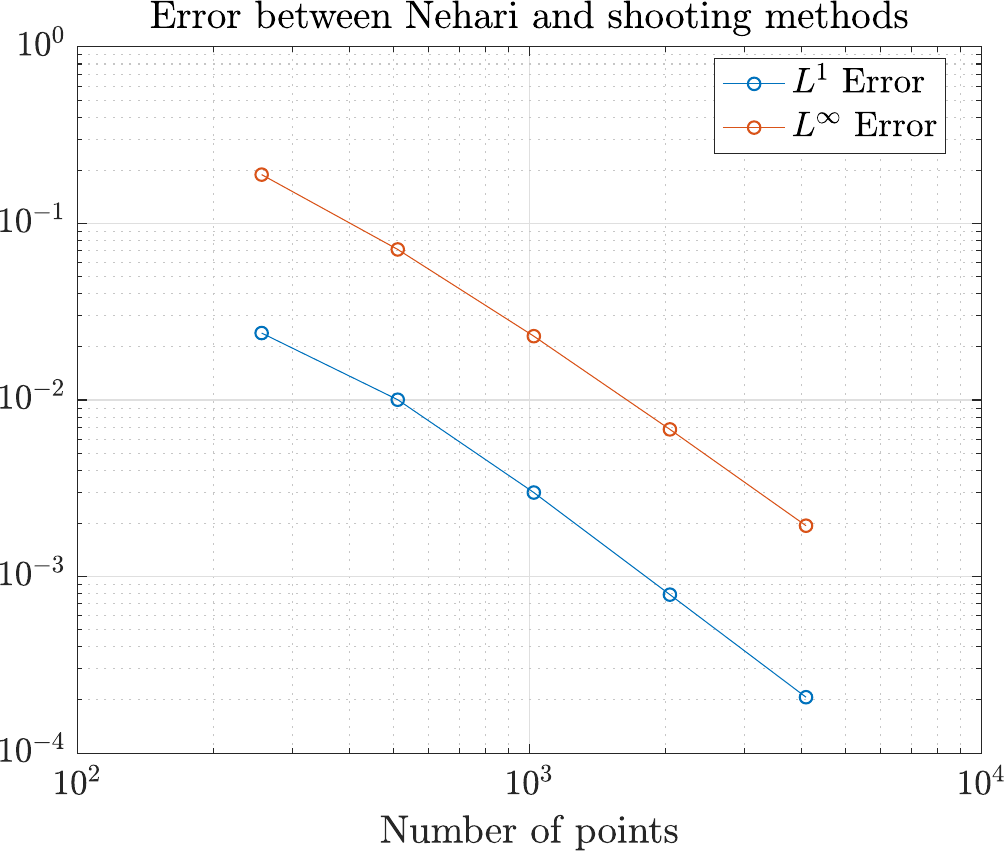}
         \caption{$N_{\textrm{nodes}} = 2$}
     \end{subfigure}
     \hfill
%     \begin{subfigure}[b]{0.3\textwidth}
%         \centering
%         \includegraphics[width=\textwidth]{Comparaison_Nehari_Shooting_3_nods}
%         \caption{$N_{\textrm{nodes}} = 3$}
%     \end{subfigure}
%     \hfill\begin{subfigure}[b]{0.3\textwidth}
%         \centering
%         \includegraphics[width=\textwidth]{Comparaison_Nehari_Shooting_4_nods}
%         \caption{$N_{\textrm{nodes}} = 4$}
%     \end{subfigure}
     \hspace{1em}\begin{subfigure}[b]{0.3\textwidth}
         \centering
         \includegraphics[width=\textwidth]{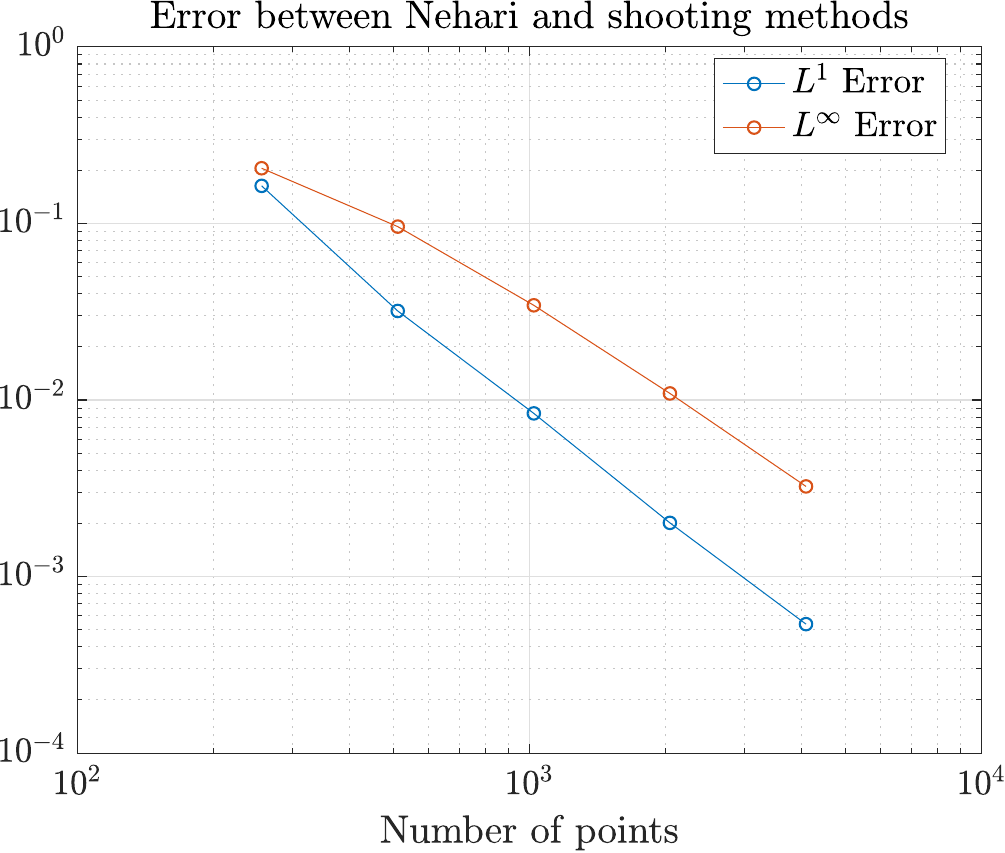}
         \caption{$N_{\textrm{nodes}} = 5$}
     \end{subfigure}

        \caption{Gap between the bound states computed with the Nehari method and the shooting method}\label{ref:comparaison}
\end{figure}

In conclusion, we have studied two numerical methods to compute the bound states of the nonlinear Schrödinger equation in the radial case. The shooting method offers the advantage of being fast but the disadvantage of being less robust, whereas the Nehari method is robust but slow. Based on this observation, this suggest, for numerical experiments, to combine these two methods, that is, to make an initial approximation of the bound state using the shooting method and then refine it using the Nehari method to obtain the desired decay towards zero at infinity.

\section{Numerical Experiments}\label{sec:exp}

In this section, we present some results obtained by numerical experiments consisting in running first the shooting method and then take the outcome as initial data for the Nehari method. 

In Figure \ref{fig:nodsdata}, we consider the case $d=2$ and $p=3$ and depict the relation between the number of nodes $k$ of the bound state $u_k$  and its initial value $u_k(0)$. We fit the data points with a function $k\mapsto a + b\sqrt{k}$ where $a = 0.4841$ (with $95$ percent confidence bounds $[0.4487,0.5194]$) and $b = 2.415$ (with $95$ percent confidence bounds $[2.409,2.422]$).
 
 \begin{figure}[htpb!]
\centering
\includegraphics[width=0.65\textwidth]{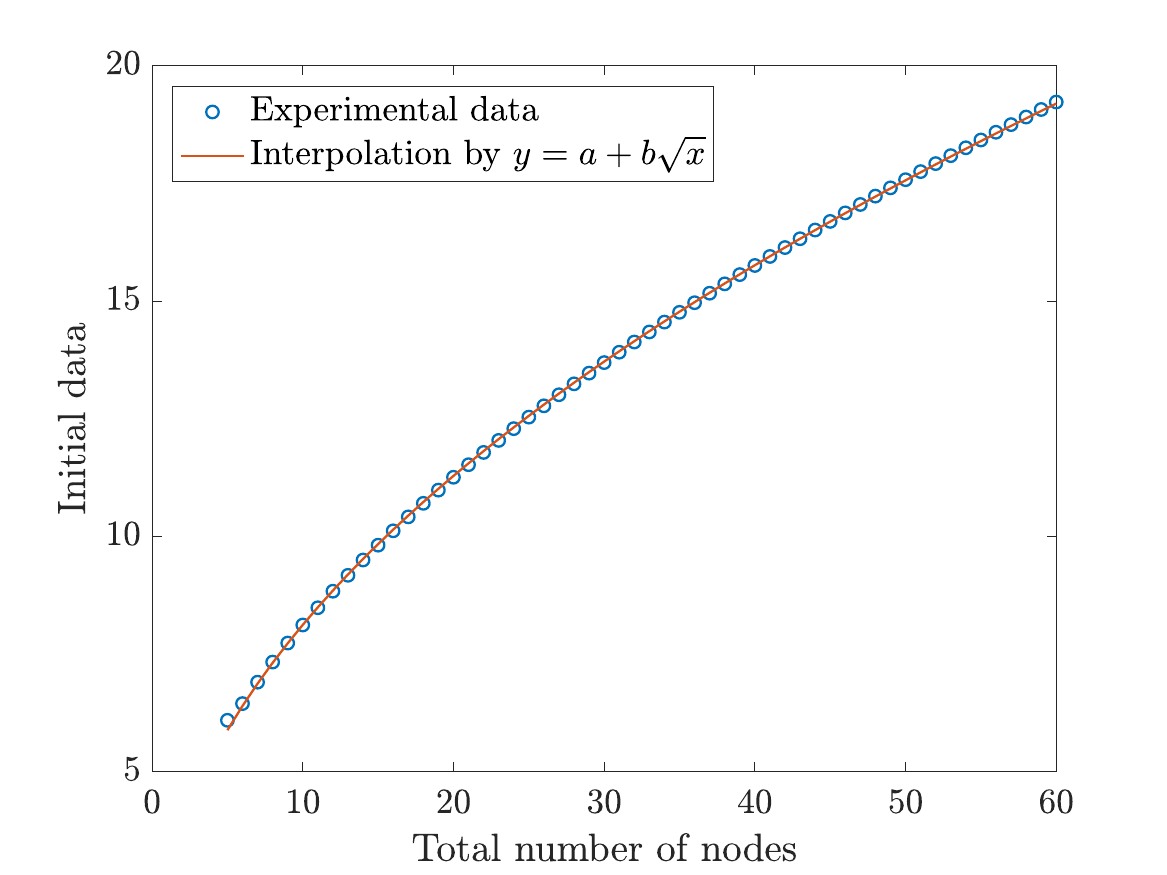}
\caption{Evolution of the number of nodes depending on the value of the initial data}
\label{fig:nodsdata}
\end{figure}

We also studied the positions of the nodes depending on the bound state (that is depending on its total number of nodes). These positions are depicted in Figure \ref{fig:nodesposA}. For each node, the position seems to follow a certain behavior that can be modeled by the function $k\mapsto 1/\sqrt{ak+b}$. We illustrate the value of the coefficients $a$ and $b$ for each node in Figure \ref{fig:nodesposB}.

\begin{figure}
 \centering
     \begin{subfigure}[b]{0.49\textwidth}
         \centering
         \includegraphics[width=\textwidth]{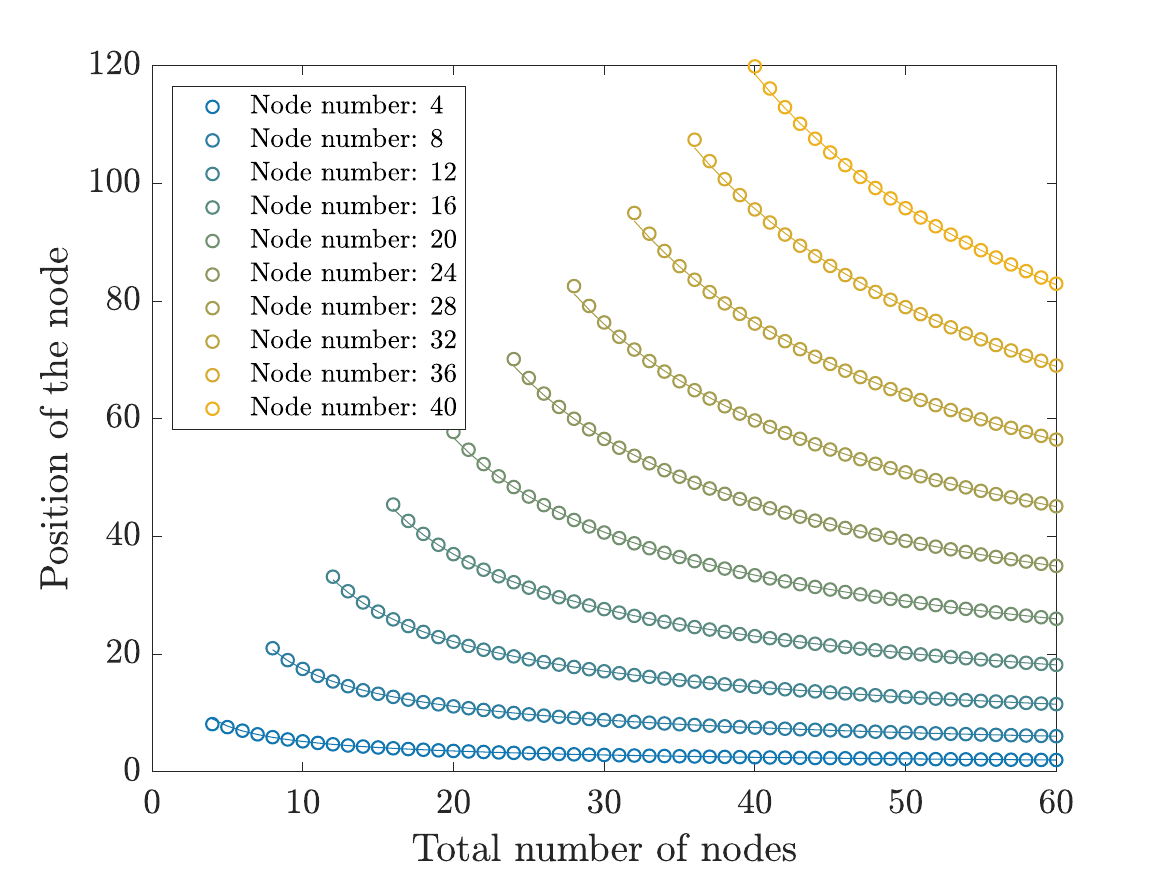}
         \caption{Positions of the nodes}\label{fig:nodesposA}
     \end{subfigure}
     \hfill
     \begin{subfigure}[b]{0.49\textwidth}
         \centering
         \includegraphics[width=\textwidth]{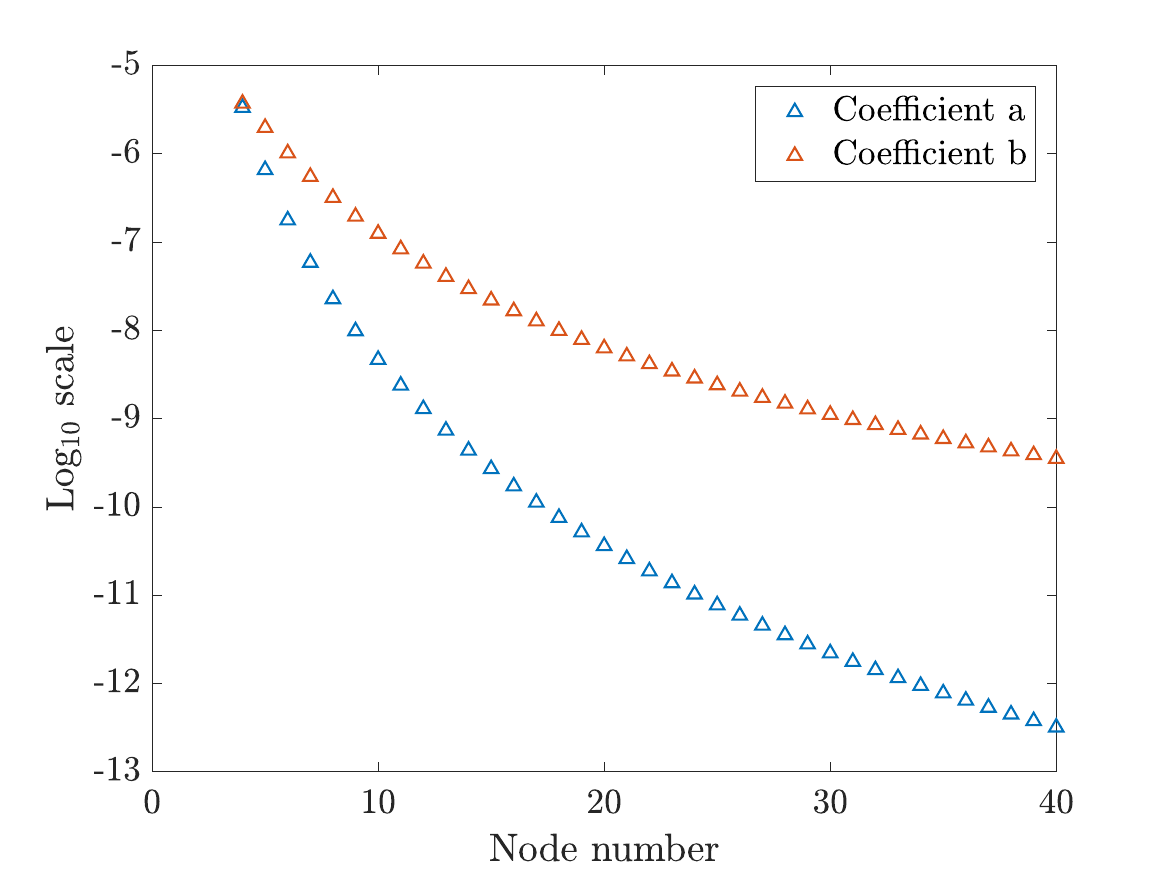}
         \caption{Interpolation's coefficients}\label{fig:nodesposB}
     \end{subfigure}
     \caption{Nodes positions and interpolation by $k \mapsto 1/\sqrt{ak+b}$}\label{fig:nodespos}
\end{figure}

In Figure \ref{fig:maxima}, we plot the positions and (absolute) values of the extrema of the bound states between two consecutive nodes. We observe that for large numbers of nodes in the bound state, the extrema tend to a constant value which is $\sqrt{2}$. This can be explained by the fact that for large $r$ the first derivative term in \eqref{eq:1} vanishes and the solution is close to a soliton of the one dimensional setting whose expression is known to be $r\mapsto\sqrt{2}\,\textrm{sech}(r)$. This is illustrated in Figure \ref{fig:asym} where we superimpose this soliton (adequately shifted) with the (absolute) value of the bound state between its last two nodes (with a total number of nodes equal to $60$).

 \begin{figure}[htpb!]
\centering
\includegraphics[width=0.65\textwidth]{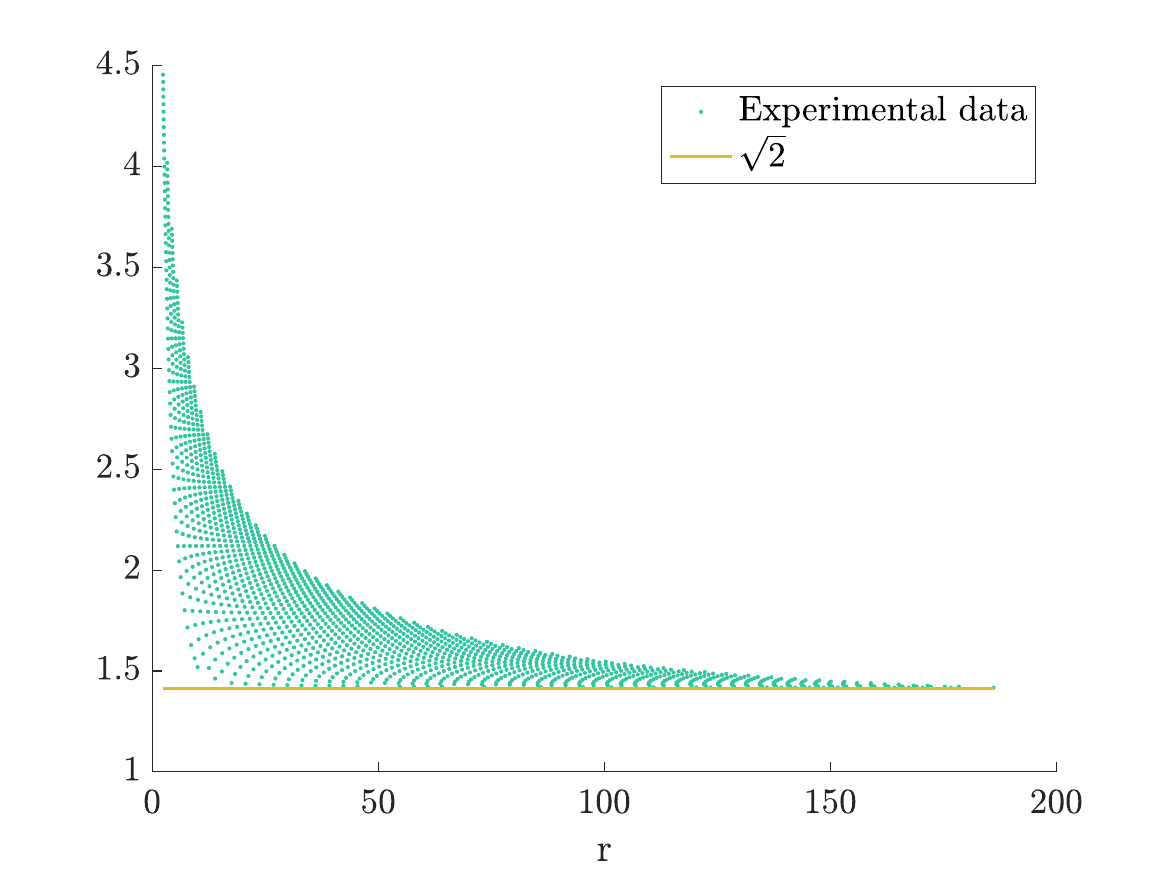}
\caption{Local maxima of the bounds states by positions (x-axis) and values (y-axis)}
\label{fig:maxima}
\end{figure}

\begin{figure}
 \centering
     \begin{subfigure}[b]{0.49\textwidth}
         \centering
         \includegraphics[width=\textwidth]{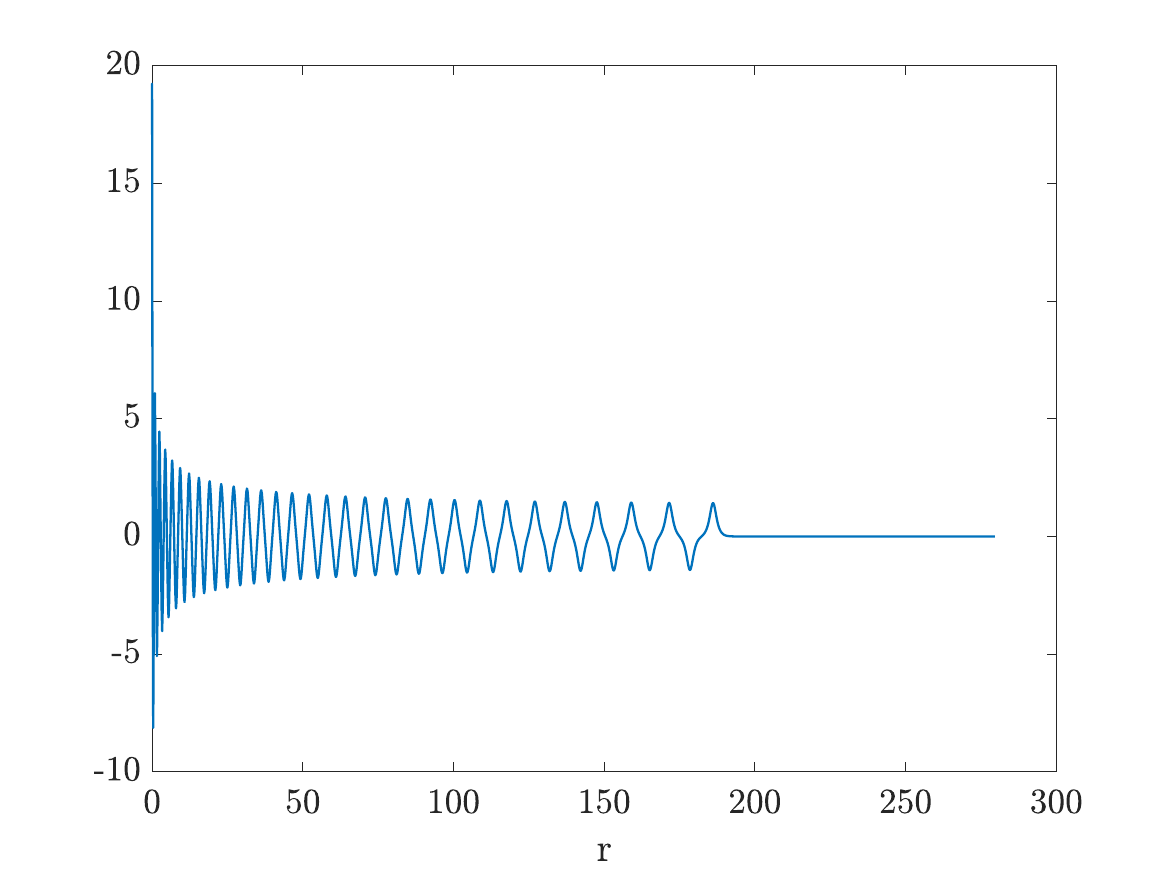}
         \caption{The bound state}
     \end{subfigure}
     \hfill
     \begin{subfigure}[b]{0.49\textwidth}
         \centering
         \includegraphics[width=\textwidth]{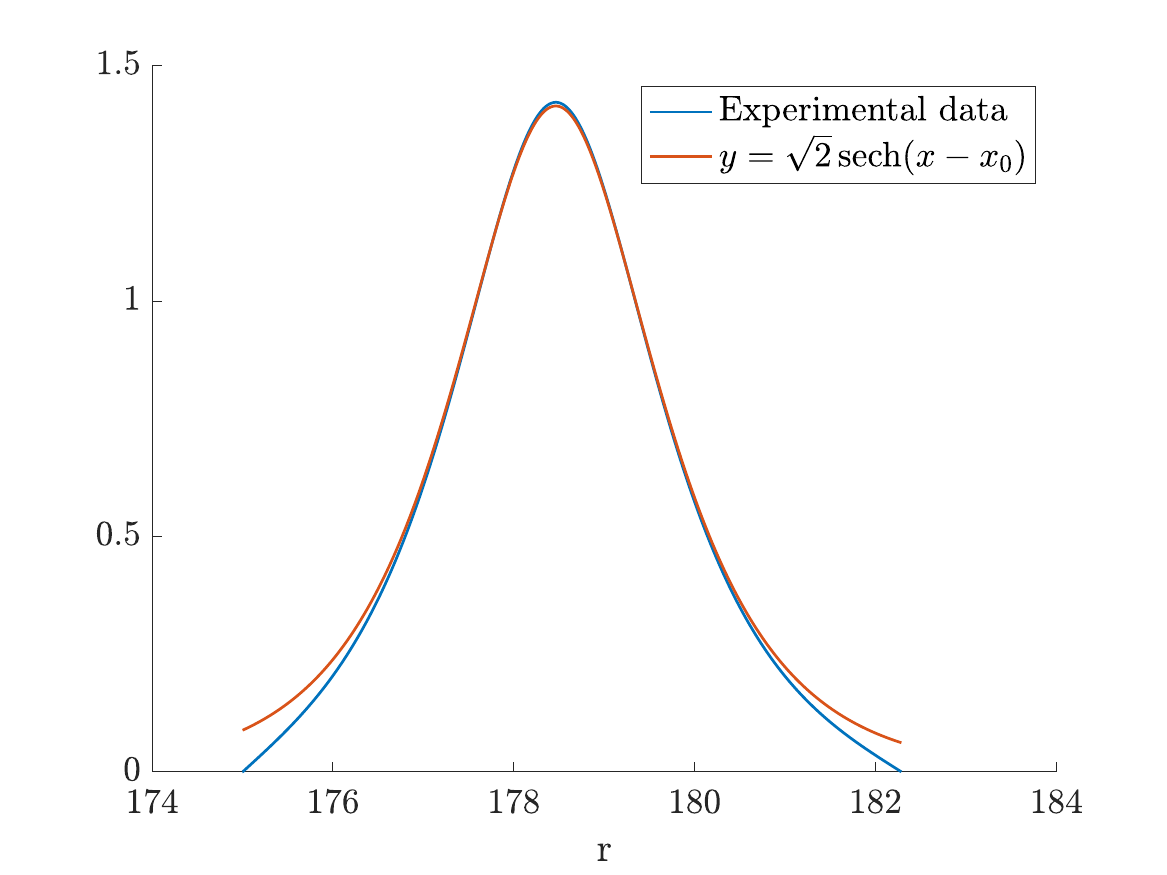}
         \caption{Final oscillation}
     \end{subfigure}
     \caption{Asymptotic behavior of the bound state with $60$ nodes}\label{fig:asym}
\end{figure}

%\section{Numerical Experiments}
%
%In this section, we perform some numerical experiments around bound states.
%
%In Figure \ref{ref:boundstatesd2} and Figure \ref{ref:boundstatesd3}, we plot bound states computed for different values of $p\in (1,2*]$ in dimension $d=2,3$. 
%
%\begin{figure}
% \centering
%     \begin{subfigure}[b]{0.45\textwidth}
%         \centering
%         \includegraphics[width=\textwidth]{Bound_states_1_nodes_d_2.pdf}
%         \caption{$N_{\textrm{nodes}} = 1$}
%     \end{subfigure}
%     \hspace{2em}
%     \begin{subfigure}[b]{0.45\textwidth}
%         \centering
%         \includegraphics[width=\textwidth]{Bound_states_2_nodes_d_2.pdf}
%         \caption{$N_{\textrm{nodes}} = 2$}
%     \end{subfigure}
%     
%     \begin{subfigure}[b]{0.45\textwidth}
%         \centering
%         \includegraphics[width=\textwidth]{Bound_states_3_nodes_d_2.pdf}
%         \caption{$N_{\textrm{nodes}} = 3$}
%     \end{subfigure}
%     \hspace{2em}
%     \begin{subfigure}[b]{0.45\textwidth}
%         \centering
%         \includegraphics[width=\textwidth]{Bound_states_4_nodes_d_2.pdf}
%         \caption{$N_{\textrm{nodes}} = 4$}
%     \end{subfigure}
%
%        \caption{Bound states in dimension $d=2$}\label{ref:boundstatesd2}
%\end{figure}

\appendix

\section{Quantitative Deformation Lemma}
\label{sec:some-techn-results}

In this appendix, we recall the quantitative Deformation Lemma used in the proof of Theorem \ref{thm:least-energy-nodal}.

For $X$ in a Banach space and $S\subset X$, introduce the notation
\[
S_{\delta}:=\{u\in X | \operatorname{dist}(u,S)\leq \delta\},
\]
and for $\varphi:X\to\R$, $c\in \R$, define
\[
\varphi^c=\varphi^{-1}((-\infty,c]).
\]

\begin{lemma}[Quantitative Deformation Lemma \cite{Wi96}]
\label{lem:willem}
  Let $X$ be a Banach space, $\varphi\in\mathcal C^1(X,\R)$, $S\subset
  X$, $c\in\R$, $\eps,\delta>0$ such that 
\[
\norm{\varphi'(u)}_X\geq \frac{8\eps}{\delta}\text{ for all
}u\in\varphi^{-1}([c-2\eps,c+2\eps])\cap S_{2\delta},
\]

Then there exists $\eta\in\mathcal C([0,1]\times X,X)$ such that
\begin{itemize}
\item[(i)] $\eta(t,u)=u$ if $t=0$ or if $u\notin
  \varphi^{-1}([c-2\eps,c+2\eps])\cap S_{2\delta}$,
\item[(ii)] $\eta(1,\varphi^{c+\eps}\cap S)\subset \varphi^{c-\eps}$,
\item[(iii)] $\eta(t,\cdot)$ is an homeomorphism of $X$ for all
  $t\in[0,1]$,
\item [(iv)] $\norm{\eta(t,u)-u}_{X}\leq \delta$ for all $u\in X$ and for all
  $t\in[0,1]$,
\item[(v)] $\varphi(\eta(\cdot,u))$ is non increasing for all $u\in X$,
\item [(vi)] $\varphi(\eta(t,u))<c$ for all $u\in \varphi^c\cap
  S_\delta$ and for all $t\in(0,1]$. 
\end{itemize}
\end{lemma}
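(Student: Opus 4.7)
The plan is to realize $\eta$ as the time-rescaled flow of a cut-off pseudo-gradient vector field, in the classical style of Palais. I would first invoke the pseudo-gradient lemma on the open set $\tilde X:=\{u\in X:\varphi'(u)\neq 0\}$ to obtain a locally Lipschitz map $V:\tilde X\to X$ satisfying
\[
\norm{V(u)}\leq 2\norm{\varphi'(u)}_{X^*},\qquad \dual{\varphi'(u)}{V(u)}\geq \norm{\varphi'(u)}_{X^*}^2.
\]
Writing $A:=\varphi^{-1}([c-2\eps,c+2\eps])\cap S_{2\delta}$, the hypothesis $\norm{\varphi'(u)}_{X^*}\geq 8\eps/\delta>0$ ensures $A\subset\tilde X$, and the normalized field $W:=V/\norm{V}$ is locally Lipschitz on $A$ with directional derivative bound $\dual{\varphi'(u)}{-W(u)}\leq -\tfrac12\norm{\varphi'(u)}_{X^*}\leq -4\eps/\delta$.

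Next, let $B:=\varphi^{-1}([c-\eps,c+\eps])\cap S_\delta$. A short topological check shows $B\subset\operatorname{int}(A)$, so the Urysohn-type function
\[
\psi(u):=\frac{\operatorname{dist}(u,X\setminus A)}{\operatorname{dist}(u,X\setminus A)+\operatorname{dist}(u,B)}
\]
is well defined, locally Lipschitz on $X$, equal to $1$ on $B$ and to $0$ outside $A$. Setting $G(u):=-\psi(u)W(u)$ on $A$ and $G(u):=0$ elsewhere yields a globally bounded ($\norm{G}\leq 1$) locally Lipschitz vector field on $X$. Standard Banach-space ODE theory then provides a unique global continuous flow $\sigma:\R\times X\to X$ solving $\dot\sigma=G(\sigma)$, $\sigma(0,u)=u$, and I would finally define $\eta(t,u):=\sigma(\delta t,u)$.

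From this construction, (i) follows because $G$ vanishes outside $A$; (iii) from reversibility of the flow; (iv) from $\norm{G}\leq 1$ combined with the rescaling by $\delta$; and (v) from $\frac{d}{dt}\varphi(\sigma)=\dual{\varphi'(\sigma)}{G(\sigma)}\leq 0$. For the key property (ii), take $u\in\varphi^{c+\eps}\cap S$. By (iv) the orbit remains in $S_{2\delta}$ for $t\in[0,1]$, and as long as it also lies in $\varphi^{-1}([c-\eps,c+\eps])$ one has $\psi=1$, giving $\frac{d}{dt}\varphi(\eta(t,u))\leq -4\eps$; integrating from $0$ to $1$ yields $\varphi(\eta(1,u))\leq c-3\eps$. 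If instead the orbit ever exits $\varphi^{-1}([c-\eps,c+\eps])$ through the lower boundary, monotonicity (v) keeps $\varphi(\eta(1,u))\leq c-\eps$. Property (vi) is obtained similarly: at any $u\in\varphi^c\cap S_\delta$ with $\varphi'(u)\neq 0$, $\psi(u)=1$ and the flow starts to strictly decrease $\varphi$.

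The chief obstacle is the pseudo-gradient construction itself, which requires paracompactness of $\tilde X$ and a partition-of-unity gluing of local directions associated to the continuous linear functional $\varphi'$; a secondary technical point is verifying $B\subset\operatorname{int}(A)$ so that the cut-off $\psi$ is well defined and continuous. Once these are in place, the quantitative decrease in (ii) reduces to the single integration above, and all remaining properties follow mechanically from the choice of cut-off and the time rescaling by $\delta$.
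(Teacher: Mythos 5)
Your proposal is correct and follows essentially the standard pseudo-gradient-flow argument of Willem \cite{Wi96}, which is precisely the reference the paper cites for this lemma without reproducing a proof: the cutoff $\psi$ between $B=\varphi^{-1}([c-\eps,c+\eps])\cap S_\delta$ and $A=\varphi^{-1}([c-2\eps,c+2\eps])\cap S_{2\delta}$, the normalized pseudo-gradient field, the time rescaling by $\delta$, and the quantitative decrease estimate are exactly the ingredients of that proof. The bookkeeping (decrease rate $4\eps/\delta$, total decrease $4\eps>2\eps$, exit through the lower level only) is carried out correctly, so nothing essential is missing.
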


\bibliographystyle{abbrv}
\bibliography{biblio}

\end{document}